\def\dessin#1#2{\includegraphics[#1]{#2}}
\def\bleu{\textcolor{blue}}
\newtheorem*{rep@theorem}{\rep@title}
\newcommand{\newreptheorem}[2]{
\newenvironment{rep#1}[1]{
\def\rep@title{#2 \ref{##1}}
\begin{rep@theorem}}
{\end{rep@theorem}}}
\def\auteur#1{{\sc #1}}
\def\titreref#1{{\em #1}}
\def\vol#1{{\bf #1}}
\newtheorem{thm}{Theorem}
\newtheorem{conj}{Conjecture}
\newtheorem{rmk}{Remark}
\newtheorem{prop}{Proposition}
\newtheorem{defn}{Definition}
\newtheorem{alg}{Algorithm}
\numberwithin{equation}{section}
\numberwithin{thm}{section}
\numberwithin{lemma}{section}
\numberwithin{conj}{section}
\numberwithin{rmk}{section}
\numberwithin{prop}{section}
\numberwithin{cor}{section}
\numberwithin{defn}{section}
\numberwithin{alg}{section}
\newcommand{\area}{\operatorname{area}}
\newcommand{\dinv}{\operatorname{dinv}}
\newcommand{\arm}{\operatorname{arm}}
\newcommand{\leg}{\operatorname{leg}}
\newcommand{\coarm}{\operatorname{coarm}}
\newcommand{\coleg}{\operatorname{coleg}}
\newcommand{\tdinv}{\operatorname{tdinv}}
\newcommand{\mdinv}{\operatorname{maxtdinv}}
\newcommand{\rank}{\operatorname{rank}}
\newcommand{\ides}{\operatorname{ides}}
\newcommand{\des}{\operatorname{des}}
\newcommand{\pides}{\operatorname{pides}}
\newcommand{\p}{\operatorname{comp}}
\newcommand{\ret}{\operatorname{ret}}
\def\OC#1{OC[#1]}
\def\IC#1{IC[#1]}
\newcommand{\shape}{\operatorname{sh}}
\def\define#1{\bleu{\emph{#1}}}
\def\Parking{\mathrm{Park}}
\def\U{\mathbf{u}}
\def\V{\mathbf{v}}
\def\tabl{T}
\def\path{\gamma}
\def\X{\mathbf{x}}
\def\SL{\mathrm{SL}}
\def\park{\pi}
\def\tfrac#1#2{{\textstyle\frac{#1}{#2}}}
\def\un{\cdot\mathbf{1}}
\def\unn{\cdot (-\mathbf{1})^n}
\def\LLT{\mathrm{LLT}}
\def\N{\mathbb{N}}
\def\A{\Gamma}
\def\correction{\displaywidth=\parshapelength\numexpr\prevgraf+2\relax}
\def\pref#1{{\rm (\ref{#1})}}
\def\Hik{H}
\DeclareMathOperator{\Qop}{\mathbf{Q}}
\DeclareMathOperator{\spl}{Split}
\DeclareMathOperator{\BC}{\mathbf{C}}
\DeclareMathOperator{\BB}{\mathbf{B}}
\DeclareMathOperator{\D}{\mathbf{D}}
\DeclareMathOperator{\Be}{\mathbf{e}}
\DeclareMathOperator{\BE}{\mathbf{E}}
\DeclareMathOperator{\BF}{\mathbf{F}}
\DeclareMathOperator{\Bf}{\mathbf{f}}
\def\ue{\underline{e}}
\def\uh{\underline{h}}
\def\charac{\raise 2pt\hbox{\large$\chi$}}
\title{Compositional \lowercase{$(km,kn)$}--Shuffle Conjectures}
\author{F.~Bergeron}
\address{\href{http://bergeron.math.uqam.ca}{D\'epartement de Math\'ematiques, Lacim, UQAM.}}
  \email{\href{mailto:bergeron.francois@uqam.ca}{bergeron.francois@uqam.ca}}
 \author{A.~Garsia}
\address{\href{http://www.math.ucsd.edu/~garsia/}{Department of Mathematics, UCSD.}}
 \email{\href{mailto:garsia@math.ucsd.edu}{garsia@math.ucsd.edu}}
 \author{E.~Leven}
\address{Department of Mathematics, UCSD.}
 \email{\href{mailto:esergel@ucsd.edu}{esergel@ucsd.edu}}
 \author{G.~Xin}
 \address{\href{http://cfc.nankai.edu.cn/homepage/xin/}{School of mathematical science, Capital Normal University, PR China}}
 \email{\href{mailto:guoce.xin@gmail.com}{guoce.xin@gmail.com}}
 \date{July 6, 2014. This work was supported by NSERC, NSF grant DGE 1144086 and NSFC(11171231).}
\begin{document}

%%%%%%%%%%%%%%%%%%%%%%%%%%%%%%%%%%

\begin{abstract}
In 2008, Haglund, Morse and Zabrocki \cite{classComp} formulated a Compositional form of the Shuffle Conjecture of Haglund {\em et al.} \cite{Shuffle}. In very recent work, Gorsky and Negut  by combining their discoveries \cite{GorskyNegut}, \cite{NegutFlags} and \cite{NegutShuffle}, with the work of Schiffmann-Vasserot \cite{SchiffVassMac} and \cite{SchiffVassK} on the symmetric function side and the work of Hikita \cite{Hikita} and Gorsky-Mazin \cite{GorskyMazin} on the combinatorial side, were led to formulate an infinite family of conjectures that extend the original Shuffle Conjecture of \cite{Shuffle}. In fact, they formulated one conjecture for each pair $(m,n)$ of coprime integers. This work of Gorsky-Negut leads naturally to the question as to where the Compositional Shuffle Conjecture of Haglund-Morse-Zabrocki fits into these recent developments. Our discovery here is that there  is a compositional extension of the Gorsky-Negut Shuffle Conjecture for each pair $(km,kn)$, with $(m,n)$ co-prime and $k > 1$.
\end{abstract}

\maketitle
 \parskip=0pt
{ \setcounter{tocdepth}{1}\parskip=0pt\footnotesize \tableofcontents}
\parskip=8pt  
\parindent=20pt

%%%%%%%%%%%%%%%%%%%%%%%%%%%%%%%%%%

\section*{Introduction} 
The subject of the present investigation  has its origin, circa 1990, in a  effort  to obtain a representation theoretical setting for the Macdonald $q,t$-Kotska coefficients. This effort culminated in  Haimain's proof, circa  2000, of the $n!$ conjecture (see \cite{natBigraded})
by means of the Algebraic Geometry of the Hilbert Scheme. 
In the 1990's, a concerted effort by many researchers led to a variety of conjectures  tying the theory of Macdonald Polynomials to 
the representation theory of Diagonal Harmonics and the combinatorics
of parking functions. More  recently, this subject has been literally flooded 
with connections with other areas of mathematics such as: the Elliptic Hall Algebra of Shiffmann-Vasserot, the Algebraic Geometry of Springer Fibers of Hikita, the Double Affine Hecke Algebras of Cherednik,  the HOMFLY polynomials,  and the truly fascinating Shuffle Algebra of symmetric functions. This has brought to the fore a variety of symmetric function operators with close connection to the extended notion of \define{rational parking functions}. The present work results from an ongoing effort to express and deal with these new developments  in a
language that is more accessible to the algebraic 
combinatorial audience.
This area of investigation  involves many aspects of symmetric function theory, including a central role played by Macdonald polynomials, as well as some of their closely related  symmetric function operators.  
One of the alluring characteristic features of   these operators  is that they appear to control in a rather  surprising manner  combinatorial properties of 
 rational  parking functions. A close investigation of these connections led us to a variety of new discoveries and conjectures in this area which in turn  
 should  open up a variety of open problems in 
Algebraic Combinatorics as well as in the above 
 mentioned areas.

\section{The previous shuffle conjectures}
We begin by reviewing the statement of the  Shuffle Conjecture of Haglund {\em et al.}  (see \cite{Shuffle}). In Figure \ref{fig:Park2} we have an example of two convenient ways to represent a parking function: a two-line array and a tableau. 
\begin{figure}[ht]
\begin{center}
\begin{displaymath}
\begin{bmatrix}
4 &  6 &  8 &  1 &  3  & 2 &  7 &  5 \\
0 &  1 &  2 &  2 &  3  & 0 &  1 &  1 \\ 
\end{bmatrix} \qquad\qquad \Longleftrightarrow \qquad\qquad
\vcenter{\hbox{\dessin{width=130pt}{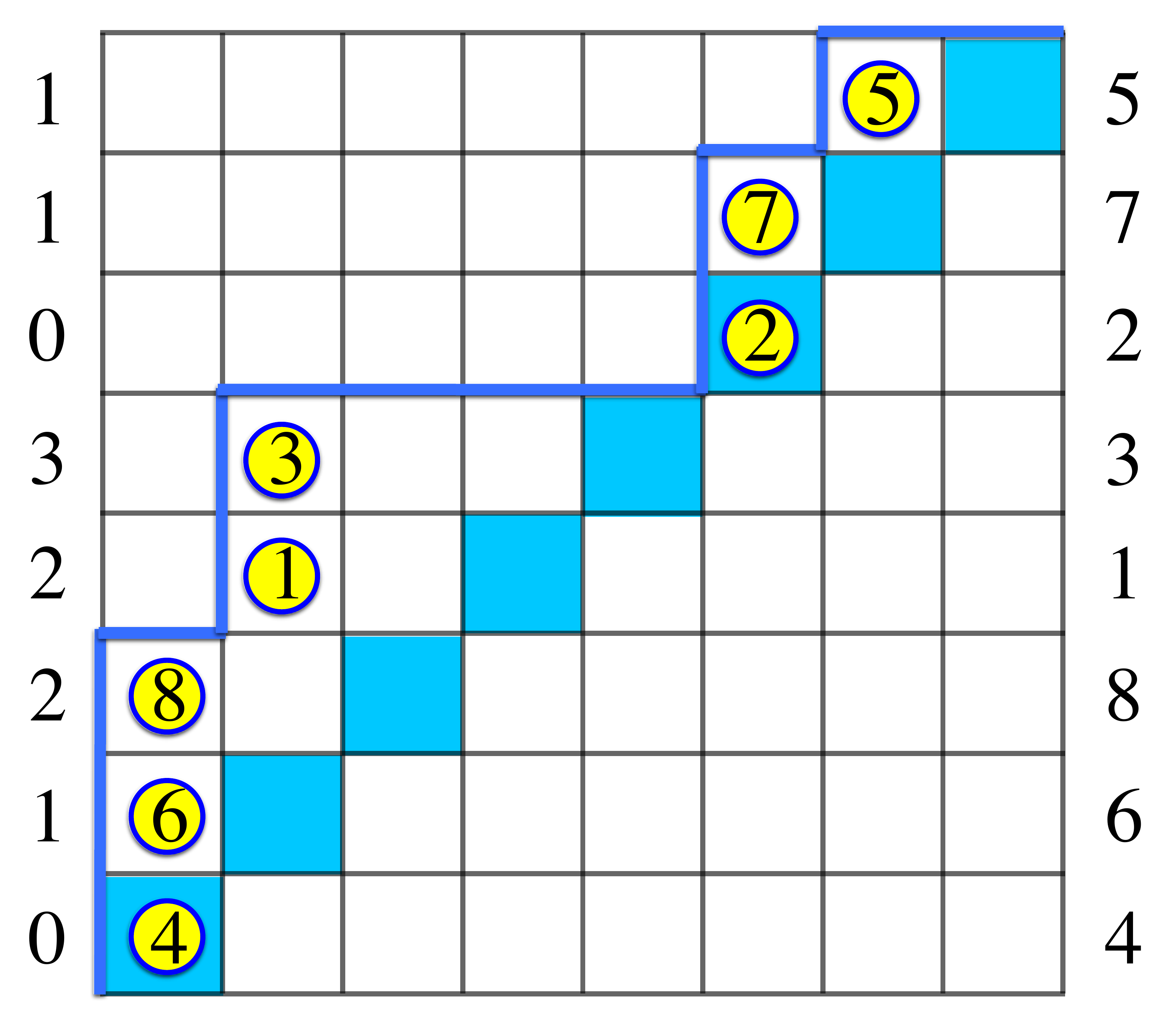}}}
\end{displaymath}
\caption{Two representations of a parking function}
\label{fig:Park2}
\end{center}
\end{figure}
The tableau on the right is constructed by first choosing a \hbox{Dyck  path.}  Recall that this is a path   in the $n\times n$ lattice square that goes from $(0,0)$ to $(n,n)$ by \define{north} and \define{east} steps, always remaining weakly above the main diagonal (the shaded cells). The lattice cells adjacent and to the east of north steps are filled with {\define{cars} $ 1,2,\dots,n $}  in a column-increasing  manner. The numbers on the top of the two-line array are the cars as we read them by rows, from bottom to top. The numbers on the bottom of the two line array are the \define{area} numbers, which are obtained by successively counting the number of lattice cells between a \define{north} step and the main diagonal.
All the necessary statistics of a parking function $\park$ can be immediately obtained from the corresponding two line array 
$$
\park:= \Big[\begin{matrix}
v_1 & v_2 & \cdots  &  v_n\\[-4pt] 
u_1 & u_2 & \cdots  &  u_n\\ 
\end{matrix}\Big].$$
To begin we let
\begin{eqnarray*}
\area(\park)&:=&\sum_{i=1}^n u_i,
\qquad {\rm and}\\
\dinv(\park)&:=& 
\sum_{1 \leq i<j \leq n } \charac\big(\,( u_i=u_j\  \&\  v_i<v_j)\quad {\rm or}\quad
 (u_i=u_j+1\  \&\  v_i>v_j)\,\big),
\end{eqnarray*}
where $\charac(-)$ denotes the function that takes value $1$ if its argument is true, and $0$ otherwise.
Next we define $\sigma(\park)$ to be the permutation obtained by successive right to left readings of the components of the vector $(v_1 , v_2 , \dots  ,  v_n)$ according to decreasing values of $u_1 , u_2 , \dots,  u_n$.  Alternatively, $\sigma(\park)$ is also obtained by reading the cars, in the tableau, from right to left  by diagonals and from the highest diagonal to the lowest. Finally, we denote by $\ides(\park)$  the descent set of the inverse of $\sigma(\park)$. 
This given, in \cite{Shuffle} Haglund {\em et al.} stated the following.  
\begin{conj}[HHLRU-2005] \label{conjHHLRU}
For all $n \geq 1$,
\begin{equation} \label{HHLRU}
 \nabla e_n(\X) \,=\, \sum_{\park \in\Parking_n } t^{\area(\park) }q^{\dinv(\park)} F_{\ides(\park)}[\X]
\end{equation}
\end{conj}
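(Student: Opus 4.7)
The plan is to pass to the stronger Compositional Shuffle Conjecture of Haglund--Morse--Zabrocki, which refines (\ref{HHLRU}) by grouping parking functions according to where their Dyck path returns to the main diagonal. For a composition $\alpha=(\alpha_1,\ldots,\alpha_k)$ of $n$, one writes on the combinatorial side the ``touch-refined'' sum
\begin{equation*}
\Pi_\alpha(\X;q,t) \,=\, \sum_{\park:\,\mathrm{touch}(\park)=\alpha} t^{\area(\park)}q^{\dinv(\park)} F_{\ides(\park)}[\X],
\end{equation*}
and on the algebraic side the matching symmetric function $C_{\alpha_1}C_{\alpha_2}\cdots C_{\alpha_k}\cdot 1$, built from the Haglund--Morse--Zabrocki operators $C_a$. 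Summing over all compositions recovers $\nabla e_n \cdot (-1)^n$, so Conjecture~\ref{conjHHLRU} will follow once one establishes the refined identity $C_{\alpha_1}\cdots C_{\alpha_k}\cdot 1 = \Pi_\alpha$ for every $\alpha$.

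Next I would introduce a larger algebra acting on the polynomial ring in several alphabets---the so-called Dyck path algebra, generated by two ``creation/annihilation'' operators $d_+,d_-$ together with Hecke operators $T_i$ encoding the natural braid-type action on the $x$-variables. The key algebraic input is that each $C_a$ can be realized as a specific word in $d_+$, $d_-$ and the $T_i$ applied to the constant~$1$. In parallel, there is a natural set of recursions on the combinatorial side for $\Pi_\alpha$, obtained by decomposing a parking function according to its bottom row of cars and the position of its first diagonal return, together with a dinv-preserving insertion procedure that slides a new car into the reading word.

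The central step is then to construct a representation of the Dyck path algebra on a suitable space of labelled combinatorial objects whose generating function gives the $\Pi_\alpha$ side, and to verify that the combinatorial avatars of $d_+,d_-,T_i$ satisfy the same relations and initial condition as the symmetric function operators. Comparing the two actions word by word yields the refined identity, and hence the full conjecture.

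The main obstacle will be proving that the combinatorial operators satisfy the needed algebraic relations---in particular the commutation relations between $d_+$ and the $T_i$, and the ``mixing'' relation between $d_+$ and $d_-$. These relations are delicate because they must simultaneously control $\area$, $\dinv$ and the reading word, and so require an honest bijection rather than any sign-reversing involution. A secondary obstacle is keeping track of the quasi-symmetric variable through $F_{\ides(\park)}$, which forces one to understand precisely how each combinatorial operator alters the inverse descent set of $\sigma(\park)$; this will likely be the step that dictates the exact normalization of $d_\pm$ and the precise shape of the insertion bijection.
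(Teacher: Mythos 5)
The statement you are addressing is presented in this paper as a \emph{conjecture} (HHLRU-2005); the authors give no proof of it, and at the time of writing it was open. So there is no proof in the paper to compare against, and the only question is whether your argument actually establishes the identity. It does not: what you have written is a strategy, not a proof. Every load-bearing step is deferred. You do not define the operators $d_+$, $d_-$, $T_i$ or state the relations of the algebra they generate; you do not exhibit the word in that algebra which realizes $\BC_a$; you do not construct the combinatorial representation or the dinv-preserving insertion; and you explicitly flag the verification of the commutation and mixing relations as ``the main obstacle'' rather than resolving it. Since the entire content of the theorem lives in exactly those verifications --- simultaneously controlling $\area$, $\dinv$, and $\ides(\park)$ under the combinatorial operators --- the proposal as it stands proves nothing.

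That said, the outline is sound and historically vindicated. Reducing to the compositional refinement (Conjecture~\ref{conjHMZC}) via the identity $\sum_{\alpha\models n}\BC_\alpha\un = e_n$ of \pref{I.7} is exactly how this paper relates the two conjectures, and the plan of representing the $\BC_a$ inside an algebra generated by raising/lowering operators and Hecke operators acting on polynomials in auxiliary alphabets, then matching this against a combinatorial action, is precisely the route by which the Compositional Shuffle Conjecture was eventually proved by Carlsson and Mellit. If you intend to carry this out, note that the representation cannot live on a single space: the combinatorial objects are \emph{partial} parking functions with some number $k$ of ``open'' columns, so one needs a tower of spaces $V_k$ with $d_+\colon V_k\to V_{k+1}$ and $d_-\colon V_{k+1}\to V_k$, and the $d_-d_+$ versus $d_+d_-$ relation is a genuine $q$-deformed commutator whose verification is the heart of the matter, not a routine check.
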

Here, $\Parking_n$ stands for the set of all parking functions in the $n\times n$ lattice square. Moreover, for a subset $S \subseteq \{1,2,\cdots, ,n-1\}$, we denote by $F_S[\X]$  the corresponding Gessel fundamental quasi-symmetric function homogeneous of degree $n$ (see \cite{Gessel}). Finally, $\nabla$ is the symmetric function\footnote{See Section~\ref{secsymm} for a quick review of the usual tools for calculations with symmetric functions, and operators on them.} operator introduced in \cite{SciFi}, with eigenfunctions  the modified Macdonald polynomial basis $\{\widetilde{H}_\mu[\X;q,t] \}_{\mu}$, indexed by partitions $\mu$.

Let us now recall that a result of Gessel implies
 that
a  homogeneous symmetric function $f[\X]$ of degree $n$ has an expansion of the form
\begin{displaymath}
f[\X] \,=\, \sum_{\sigma \in S_n} c_\sigma F_{\ides(\sigma)}[\X],
\qquad\qquad ( \, \ides(\sigma) = \des(\sigma^{-1}) \, )
\end{displaymath}
if and only, if for all partitions $\mu =(\mu_1,\mu_2,\dots \mu_k)$ of $n$, we have 
\begin{displaymath}
\left\langle f \,,\, h_\mu \right\rangle
\,=\, \sum_{\sigma \in S_n} c_\sigma \, \charac \big(\sigma \in E_1\shuffle E_2 \shuffle \cdots \shuffle E_k \big),
\end{displaymath}
where $\langle- \,,\,- \rangle$ denotes the Hall scalar product of symmetric functions,  $E_1, E_2, \dots, E_k$ are successive segments of the word $123\cdots n$ of respective lengths $\mu_1,\mu_2,\dots ,\mu_k$,
and the symbol $E_1\shuffle E_2 \shuffle \cdots \shuffle E_k$ denotes the collection of permutations obtained by shuffling in all possible ways the words $E_1,E_2,\dots ,E_k$.
Thus \pref{HHLRU} may be restated as 
\begin{equation} \label{I.2}
\left\langle \nabla e_n \,,\, h_\mu \right\rangle
\, = \sum_{\park \in\Parking_n} t^{\area (\park)} q^{\dinv(\park)} \charac \big( \sigma(\park) \in E_1\shuffle E_2 \shuffle \cdots \shuffle E_k \big)
\end{equation}
for all  $ \mu \vdash n$, which is the original form of the Shuffle Conjecture. Recall that it is customary  to write $\mu\vdash n$, when $\mu$ is a partition of $n$.

For about 5 years from its formulation, the Shuffle Conjecture appeared untouchable for lack of any recursion satisfied by both sides of the equality.  However, in the fall of 2008, Haglund, Morse, and Zabrocki \cite{classComp} made a discovery that is nothing short of spectacular. They discovered that two slight deformations $\BC_a$ and $\BB_a$ of the well-known Hall-Littlewood operators, combined with $\nabla$,  yield considerably finer versions of the Shuffle conjecture. For any $a\in\N$, they define the operators $\BC_a$ and $\BB_a$, acting on  symmetric polynomials  $f[\X]$, as follows.
\begin{eqnarray} 
   \BC_a f[\X] &=& (-q)^{1-a} f\!\left[\X-{(q-1)}/{(qz)}\right]\, \sum_{m \geq 0} z^m h_m[\X]\, \Big|_{z^a}, \qquad {\rm and} \label{defC}\\ 
   \BB_b  f[\X] &=& f\!\left[ \X + \epsilon\,(1-q)/z\right]\, \sum_{m \geq 0} z^m e_m[\X]\, \Big|_{z^b},
\end{eqnarray}
where $(-)\big|_{z^a}$ means that we take the coefficient of $z^a$ in the series considered. We use here ``plethystic'' notation   which is described in more details in Section~\ref{secsymm}.
Haglund, Morse, and Zabrocki also introduce a  new statistic on paths (or parking functions), the  \define{return composition} 
    $$\p(\park)=(a_1,a_2,\dots, a_\ell),$$ 
 whose parts are the sizes of the intervals between successive diagonal hits of the Dyck path of $\park$, reading from left to right. As usual we write $\alpha\models n$, when $\alpha$ is a composition of $n$, {\em i.e.} $n=a_1+\ldots+a_\ell$ with the $a_i$ positive integers, and set $\BC_\alpha$ for the product $\BC_{a_1} \BC_{a_2} \cdots \BC_{a_\ell}$, with a similar convention for $\BB_\alpha$.
This given, their discoveries led them to state the following two conjectures\footnote{To make it clear that we are applying an operator to a constant function such as ``$\mathbf{1}$'', we add a dot between this operator and its argument.}.
\begin{conj} [HMZ-2008] \label{conjHMZC}
For any composition $\alpha$ of $n$,
$$
  \nabla \BC_\alpha \un \,=\sum_{\p(\park)=\alpha} t^{\area(\park)}q^{\dinv(\park)}F_{\ides(\park)}[\X],
$$
 where the sum is over parking functions in the $n\times n$ lattice square with composition \hbox{equal to $\alpha$.}
\end{conj}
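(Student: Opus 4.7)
The plan is to prove the identity by induction on the length $\ell$ of the composition $\alpha=(a_1,\ldots,a_\ell)$, setting up matching recursions on the two sides obtained by peeling off the first part $a_1$. The base case $\ell=1$ already requires a non-trivial statement (it is the version of the Shuffle Conjecture for parking functions whose Dyck path touches the diagonal only at the endpoints with a first block of length $a_1=n$), but it is cleaner to handle together with the inductive step by separately fixing the meaning of $\BC_{a}$ acting on an arbitrary symmetric function believed to represent a "tail" of a parking function.

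On the combinatorial side, I would establish a bijective decomposition: a parking function $\park$ with $\p(\park)=(a,\beta)$ can be split uniquely as a \emph{primary block} $\park_1$ sitting in the $a\times a$ lattice square on the first diagonal return, together with a residual parking function $\park_2$ with $\p(\park_2)=\beta$, shifted up so that its Dyck path starts at $(a,a)$. The statistics split as $\area(\park)=\area(\park_1)+\area(\park_2)$ plus a bookkeeping correction coming from the cars of $\park_2$ sitting above the shifted diagonal, and $\dinv(\park)=\dinv(\park_1)+\dinv(\park_2)+\mathrm{cross}(\park_1,\park_2)$, where $\mathrm{cross}$ counts attacking pairs with one car in each block. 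The $F_{\ides}$ quasi-symmetric function then factors as a shuffle of the two Gessel expansions, by Gessel's shuffle formula applied to the reading word $\sigma(\park)$.

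On the symmetric function side, the heart of the argument is an operator identity of the shape
\begin{equation*}
  \nabla\,\BC_a\,G[\X]\;=\;\sum_{\pi_1} t^{\area(\pi_1)}q^{\dinv(\pi_1)}\,\bigl(\mathrm{shuffle\ operator}\bigr)\,\nabla\,G[\X],
\end{equation*}
for any symmetric function $G$, where the sum ranges over the primary blocks of size $a$ and the shuffle operator records the crossing statistics. Granting such an identity, plugging $G=\BC_\beta\un$ and invoking the inductive hypothesis for $\beta$ immediately gives the desired formula for $\alpha$. To prove the operator identity one expands $\BC_a$ from its plethystic definition, applies it to the Macdonald basis $\{\widetilde H_\mu\}$ on which $\nabla$ acts diagonally, and reads off the contribution of each coefficient of $z^a$; the factor $(-q)^{1-a}$ and the substitution $\X-(q-1)/(qz)$ are designed precisely to produce the correct $q$-weight for the dinv crossings and the correct $t$-weight for the area shift.

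The main obstacle is exactly this last step: extracting from the plethystic definition of $\BC_a$ a manageable combinatorial meaning for its action on a quasi-symmetric function of the form $\sum t^{\area}q^{\dinv}F_{\ides}$. The naive plethystic calculation produces an alternating sum with potentially dramatic cancellations, and one must match it term-by-term to the $t$-weighted sum over primary blocks $\park_1$ while simultaneously tracking the interaction with $\nabla$ on the right. I would attack this via the Haglund--Morse--Zabrocki commutation relations between $\BC_a$, $\BB_b$ and the nabla operator, aiming at a "Pieri-like" formula on the modified Macdonald basis that can be matched to the attacking-pair statistic through Haglund's combinatorial formula for $\widetilde H_\mu$. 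This is the genuinely delicate step; everything else is bookkeeping on the recursion.
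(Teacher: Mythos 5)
You should first be aware that the statement you are proving is Conjecture~\ref{conjHMZC}: in this paper it is \emph{stated as a conjecture}, attributed to Haglund--Morse--Zabrocki, and no proof is given or claimed. There is therefore no proof in the paper to compare your argument against, and your text should not be read as closing an open problem --- it is a proof \emph{plan} whose central step you yourself flag as unresolved.

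That central step is a genuine gap, and it is not a technical one: the ``operator identity'' $\nabla\,\BC_a\,G=\sum_{\pi_1}t^{\area(\pi_1)}q^{\dinv(\pi_1)}(\mathrm{shuffle\ operator})\,\nabla\,G$ that you propose to ``grant'' is essentially equivalent to the conjecture itself. Everything else in your outline (the split of a parking function at its first diagonal return, additivity of area across blocks, the shuffle/coproduct behaviour of $F_{\ides}$) is standard bookkeeping and is indeed how HMZ motivated the statement. The difficulty is that the expansion of $\BC_a\widetilde H_\mu$ in the Macdonald basis has signed, non-combinatorial coefficients, $\nabla$ does not interact with $\BC_a$ by any known Pieri-type rule that produces a positive expansion, and --- more fundamentally --- the recursion you set up does not close inside the ring of symmetric functions. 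After peeling off the primary block $\park_1$, the crossing statistic $\mathrm{cross}(\park_1,\park_2)$ depends on how the cars of $\park_2$ are distributed among its diagonals, information that is \emph{not} recoverable from the symmetric function $\nabla\BC_\beta\un$ alone; your putative ``shuffle operator'' would have to see a finer invariant of $\park_2$ than its Gessel expansion provides. This is precisely why the conjecture resisted the naive induction on $\ell(\alpha)$ and why its eventual resolution (Carlsson--Mellit, well after this paper) required enlarging the ambient space to partially symmetric functions and a new algebra of raising/lowering operators rather than working with $\nabla$ and $\BC_a$ directly. As written, your argument establishes nothing beyond the (known, and easy) fact that the conjectured identity is consistent with the first-return decomposition.
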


\begin{conj}  [HMZ-2008]\label{conjHMZB}
For any composition $\alpha$ of $n$,
$$
  \nabla \BB_\alpha \un \,=\sum_{\p(\park)\preceq \alpha}t^{\area(\park)}q^{\dinv_\alpha(\park)}F_{\ides(\park)}[\X],
$$
where ``$\dinv_\alpha$'' is a suitably $\alpha$-modified $\dinv$ statistic, and the sum is over parking functions in the $n\times n$ lattice square  with composition finer than $\alpha$.
\end{conj}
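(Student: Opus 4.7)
The plan is to reduce Conjecture \ref{conjHMZB} to the $\BC$-version (Conjecture \ref{conjHMZC}) via an operator identity. Concretely, for each composition $\alpha \models n$, I would try to establish an expansion of the form
\[ \BB_\alpha \cdot \mathbf{1} \,=\, \sum_{\beta \preceq \alpha} q^{c(\alpha,\beta)} \, \BC_\beta \cdot \mathbf{1}, \]
where $c(\alpha,\beta)$ is an explicit nonnegative integer depending on how the parts of the refinement $\beta$ group together to produce $\alpha$. The motivation is that the combinatorial right-hand side of Conjecture \ref{conjHMZB} decomposes naturally as $\sum_{\beta \preceq \alpha}\sum_{\p(\park)=\beta}$, which is exactly what such an operator expansion would match.

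To derive this identity, I would first compare the plethystic formulas for $\BC_a$ and $\BB_b$ on a single part. The structural discrepancy is that $\BC_a$ uses the shift $-(q-1)/(qz)$ combined with $\sum_m h_m[\X]z^m$, while $\BB_b$ uses the shift $+\epsilon(1-q)/z$ with $\sum_m e_m[\X]z^m$. The involution $\omega$ swaps the two generating series, and one should be able to rewrite the $\BB$-shift in terms of the $\BC$-shift up to a $q$-factor. Iterating $\BB_\alpha = \BB_{a_1}\cdots\BB_{a_\ell}$ and expanding, one expects a decomposition indexed by merge patterns that split $(a_1,\ldots,a_\ell)$ into consecutive groups; each such pattern corresponds to a refinement $\beta \preceq \alpha$ and contributes a well-defined power of $q$ to $c(\alpha,\beta)$.

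Granted the operator identity, applying $\nabla$ and invoking Conjecture \ref{conjHMZC} immediately yields
\[ \nabla \BB_\alpha \cdot \mathbf{1} \,=\, \sum_{\beta \preceq \alpha} \sum_{\p(\park)=\beta} t^{\area(\park)}\, q^{\dinv(\park) + c(\alpha,\beta)}\, F_{\ides(\park)}[\X], \]
forcing the definition $\dinv_\alpha(\park) := \dinv(\park) + c(\alpha, \p(\park))$. The next task is to verify that this quantity admits a clean intrinsic description on parking functions refining $\alpha$, read off directly from the Dyck path, the diagonal return composition, and $\alpha$, rather than only through the auxiliary operator expansion — otherwise the statement of Conjecture \ref{conjHMZB} is not genuinely combinatorial.

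The main obstacle is establishing the operator identity itself. The plethystic manipulations involve several auxiliary variables $z_1,\ldots,z_\ell$ whose coefficients must be extracted in a prescribed order, and the interplay between the shifts and the $h_m$ versus $e_m$ series is delicate. A secondary, and equally serious, difficulty is the one that historically blocked the Shuffle Conjecture itself: neither side enjoys an obvious recursion in $n$, so a purely inductive approach seems unlikely to close the argument. In the absence of such a recursion, one probably needs to appeal to the deeper algebraic framework alluded to in the introduction — the elliptic Hall algebra of Schiffmann--Vasserot and the shuffle algebra of Gorsky--Negut — to produce a rigorous match between the two sides.
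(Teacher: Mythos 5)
First, note that the statement you are proving is a \emph{conjecture} (due to Haglund--Morse--Zabrocki), so neither you nor the paper can give an unconditional proof; what the paper actually supplies is exactly the reduction you propose, namely to the $\BC$-version. Your overall strategy therefore coincides with the paper's: establish $\BB_{\beta}\un=\sum_{\alpha\preceq\beta}q^{c(\alpha,\beta)}\BC_{\alpha}\un$, apply $\nabla$, invoke Conjecture~\ref{conjHMZC}, and read off $\dinv_\alpha(\park)=\dinv(\park)+c(\p(\park),\alpha)$. This is precisely Proposition~\ref{5.1} and the proposition following it (stated there in the more general $(km,kn)$ setting, which specializes to the present case).

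The gap is in how you propose to obtain the operator identity. You want to derive it by a single-part plethystic comparison of $\BC_a$ and $\BB_b$, using $\omega$ to trade the $h_m$ series for the $e_m$ series and absorbing the discrepancy in the shifts into a power of $q$. This cannot work as stated: the paper is explicit that $\BB_b$ and $\sum_{\beta\models b}\BC_{\beta}$ are \emph{different operators} which merely agree on constants, via $\BB_b\un=e_b[\X]=\sum_{\beta\models b}\BC_{\beta}\un$ (identity \pref{5.4}). A part-by-part rewriting of $\BB_b$ in terms of $\BC$'s would have to be an operator-level statement and is therefore false. The actual mechanism producing the powers $q^{c(\alpha,\beta)}$ is the commutation relation $\BB_b\BC_\gamma=q^{\ell(\gamma)}\BC_\gamma\BB_b$ (identity \pref{5.5}, proved in \cite{classComp}): one repeatedly replaces the rightmost $\BB_{b}\un$ by $\sum_{\beta\models b}\BC_\beta\un$ and then commutes the remaining $\BB$'s past the newly created $\BC$'s, each crossing contributing $q^{\ell(\cdot)}$. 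Your ``merge patterns'' intuition is pointing at the right combinatorics, but without \pref{5.5} (a genuinely nontrivial identity, not a formal consequence of $\omega$-duality of the generating series) the expansion does not close. Two further points to repair: the refinement in Proposition~\ref{5.1} is of the \emph{reverse} of $\beta$, i.e.\ $\alpha=\alpha^{(k)}\cdots\alpha^{(1)}$ with $\alpha^{(i)}\models\beta_i$ and $c(\alpha,\beta)=\sum_i(i-1)\ell(\alpha^{(i)})$, a subtlety your sketch omits; and the intrinsic combinatorial description of $\dinv_\alpha$ that you correctly flag as necessary is supplied in \cite{classComp}, not something one needs to rediscover. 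Your closing concern that the reduction remains conditional on the unproven $\BC$-conjecture is accurate and matches the paper's stance.
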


We first discuss Conjecture \ref{conjHMZC}, referred to as the Compositional Shuffle Conjecture, and we will later come back to Conjecture 1.3. Yet another use of Gessel's Theorem shows that Conjecture \ref{conjHMZC} is equivalent to the family of identities
\begin{equation} \label{I.6}
\left\langle \nabla \BC_\alpha \un, h_\mu\right\rangle = 
\sum_{\p(\park)=\alpha} 
t^{\area(\park)}q^{\dinv(\park)} \charac \big( \sigma(\park) \in E_1\shuffle E_2 \shuffle \cdots \shuffle E_k \big),
\end{equation}
where, as before, the parts of $\mu$ correspond to the cardinalities of the $E_i$.
The fact that Conjecture \ref{conjHMZC} refines the Shuffle Conjecture is due to the identity
\begin{equation} \label{I.7}
\sum_{\alpha \models n} \BC_\alpha \un \,=\, e_n,
\end{equation}
hence summing \pref{I.6} over all compositions $\alpha\models n$ we obtain \pref{I.2}. 

Our main contribution here is to show that a suitable extension of the Gorsky-Negut Conjectures (NG Conjectures) to the non-coprime case leads to the formulation of an infinite variety of new Compositional Shuffle conjectures, widely extending both the NG and the HMZ Conjectures. To state them we need to briefly review the Gorsky-Negut Conjectures in a manner that most closely resembles the classical Shuffle conjecture.

%%%%%%%%%%%%%%%%%%%%%%%%%%%%%%%%%%

\section{The coprime case} \label{sec:coprime}

Our main actors on the symmetric function side are the operators $\D_k$ and $\D_k^*$, introduced in \cite{qtCatPos}, whose action on a symmetric function $f[\X]$ are defined by setting respectively
\begin{eqnarray}
   \D_k f[\X] &:=& f\!\left[\X+{M}/{z}\right] \sum_{i \geq 0}(-z)^i e_i[\X] \Big|_{z^k},\qquad {\rm and}  \label{defD}\\ 
   \D_k^*f[\X]&:=& f\!\left[\X-{\widetilde{M}}/{z}\right] \sum_{i \geq 0}z^i h_i[\X] \Big|_{z^k}.\label{defDetoile}
\end{eqnarray}
with $M:=(1-t)(1-q)$ and $\widetilde{M}:=(1-1/t)(1-1/q)$.
The focus of the present work is the algebra of symmetric function operators generated by the family $\{ \D_k \}_{k \geq 0}$. Its connection to the algebraic geometrical developments is that this algebra is a concrete realization of a portion of the Elliptic Hall Algebra studied Schiffmann and Vasserot in \cite{elliptic2}, \cite{SchiffVassMac}, and \cite{SchiffVassK}. Our conjectures are expressed in terms of a family of operators $\Qop_{a,b}$ indexed by pairs of positive integers $a,b$. Here, and in the following, we  use  the notation $\Qop_{km,kn}$, with $(m,n)$ a coprime pair of non-negative integers and $k$ an arbitrary positive integer. In other words, $k$ is the greatest common divisor of $a$ and $b$, and $(a,b)=(km,kn)$.

\begin{wrapfigure}[8]{R}{2.8cm} \centering
    \vskip-5pt
     \dessin{height=1.5 in}{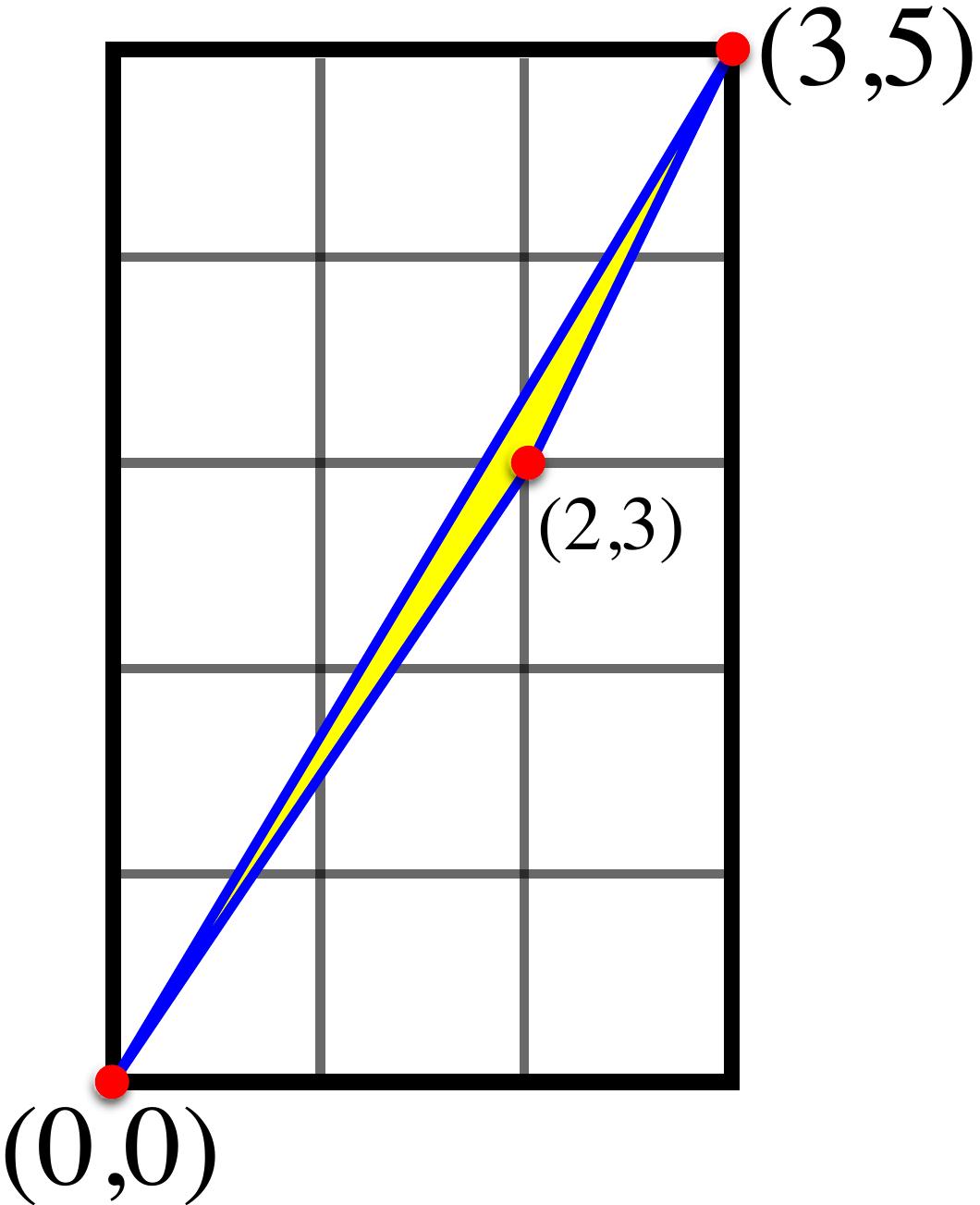}
\end{wrapfigure}
Restricted to the coprime case, the definition of the operators $\Qop_{m,n}$ is first illustrated in a special case. For instance, to obtain $\Qop_{3,5}$ we start by drawing the $3\times 5$ lattice with its diagonal (the line $(0,0) \to (3,5)$) as depictedf  in the adjacent figure. Then we look for the lattice point $(a,b)$ that is closest to and below the diagonal. In this case $(a,b)=(2,3)$. This yields the decomposition $(3,5)=(2,3)+(1,2)$, and unfolding the recursivity we get
\begin{eqnarray} 
 \Qop_{3,5} &=& \frac{1}{M} \left[ \Qop_{1,2} , \Qop_{2,3} \right]\nonumber\\
&=& \frac{1}{M} \left(
 \Qop_{1,2} \Qop_{2,3} - \Qop_{2,3} \Qop_{1,2} \right).\correction \label{1.2}
\end{eqnarray}

%\begin{figure}[ht]
%\begin{center}
%\includegraphics[width=1in]{dia35.pdf}
%\hskip 30pt
%\includegraphics[width=.8in]{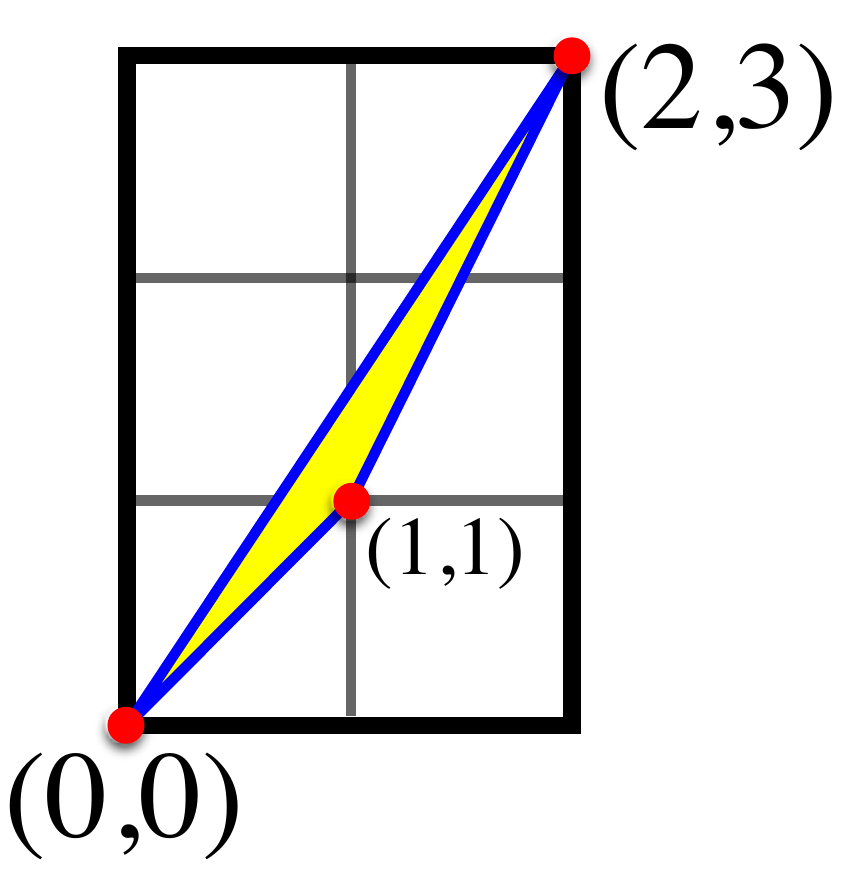}
%\caption{The $3 \times 5$, and $2 \times 3$ lattices.}
%\label{fig:dias35}
%\end{center}
%\end{figure}

\begin{wrapfigure}[5]{R}{2.8cm} \centering
    \vskip-15pt
     \dessin{height=1 in}{dia23.pdf}
\end{wrapfigure}
We must next work precisely in the same way with the $2\times 3$ rectangle, as indicated in the adjacent figure. We obtain the decomposition $(2,3)=(1,1)+(1,2)$ and recursively set 
\begin{equation} \correction \label{1.3}
\Qop_{2,3} = \frac{1}{M} \left[ \Qop_{1,2} , \Qop_{1,1} \right]
= \frac{1}{M} \left(
\Qop_{1,2} \Qop_{1,1} - \Qop_{1,1} \Qop_{1,2} \right).
\end{equation}
Now, in this case, we are done, since it turns out that  we can set
\begin{equation} \label{1.4}
\Qop_{1,k} = \D_k.
\end{equation}
In particular by combining \pref{1.2}, \pref{1.3} and \pref{1.4} we obtain
\begin{equation} \label{1.5}
\Qop_{3,5}
= \frac{1}{M^2}
\left( \D_2 \D_2 \D_1 - 2 \D_2 \D_1 \D_2 + \D_1 \D_2 \D_2 \right).
\end{equation}
%In the general case, a precise definition exploits an elementary number theoretical lemma that characterizes the closest lattice point $(a,b)$ below  the line $(0,0) \to (m,n)$. We then let $(c,d)=(m,n)-(a,b)$ and write
%\begin{displaymath} \spl(m,n) = (a,b)+(c,d). \end{displaymath}
%Observe that, when the pair $(m,n)$ is coprime, then $(a,b)$ is also necessarily so.
%We can thus recursively define
%\begin{equation} \label{1.6}
%\Qop_{m,n} = 
%\begin{cases}
%\tfrac{1}{M} \left[ \Qop_{c,d}, \Qop_{a,b} \right] & \hbox{if } m>1 \hbox{ and } \spl(m,n)=(a,b)+(c,d), \\[6pt]
%\, \D_n & \hbox{if } m=1.
%\end{cases}
%\end{equation}
To give a precise general definition of the $\Qop$ operators  we use the following elementary number theoretical characterization of the closest lattice point $(a,b)$ below the line $(0,0) \to (m,n)$. We observe that by construction $(a,b)$ is coprime. See \cite{newPleth} for a proof.
\begin{prop} \label{prop3.3}
For any  pair of coprime integers $m,n> 1$ there is a unique pair $a,b$ satisfying the following three conditions 
\begin{equation} \label{3.18}
(1) \quad 1\leq a\leq m-1, \qquad\qquad
(2) \quad 1\leq b\leq n-1, \qquad\qquad
(3) \quad mb+1=na  
\end{equation}
In particular, setting $(c,d):=(m,n)-(a,b)$ we will write, for $m,n>1$,
\begin{equation} \label{3.19}
\spl(m,n):= (a,b)+(c,d).
\end{equation}
Otherwise, we set
\begin{equation} \label{3.20}
a) \quad \spl(1,n):=(1,n-1)+(0,1),
\qquad
b) \quad \spl(m,1):=(1,0)+(m-1,1).
\end{equation}
\end{prop}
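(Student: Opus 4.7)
The plan is to treat this as a standard Bezout/linear Diophantine exercise: the equation $na - mb = 1$ admits an integer solution precisely because $\gcd(m,n)=1$, and then I just need to pin the solution down to the stated rectangle $\{1,\dots,m-1\}\times\{1,\dots,n-1\}$.

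First, by Bezout's identity applied to the coprime pair $(m,n)$, there exist integers $a_0,b_0$ with $na_0 - mb_0 = 1$. The full set of integer solutions to $na-mb=1$ is then $\{(a_0+km,\,b_0+kn) : k\in\mathbb{Z}\}$, since any two solutions differ by a solution of the homogeneous equation $na=mb$, which forces $m\mid a$ and $n\mid b$ by coprimality. Consequently there is exactly one solution with $a$ in any fixed complete residue system modulo $m$, and both existence and uniqueness of $a$ (and hence of $b$) are settled once I find one such system that coincides with $\{1,\dots,m-1\}$.

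Next I rule out $a\equiv 0\pmod m$ among the solutions: if $m\mid a$, then from $mb+1=na$ we would get $m\mid 1$, contradicting $m>1$. So the unique representative of the solution class in $\{0,1,\dots,m-1\}$ actually lies in $\{1,\dots,m-1\}$, giving part (1). It then remains to check (2), i.e.\ that the corresponding $b=(na-1)/m$ satisfies $1\le b\le n-1$. For the lower bound, $mb = na-1 \ge n\cdot 1 - 1 = n-1 \ge 1$ since $n>1$, so $b$ is a positive integer, hence $b\ge 1$. For the upper bound, $mb = na-1 \le n(m-1)-1 = mn-n-1 < mn$, so $b < n$ and, being an integer, $b\le n-1$. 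This completes the verification.

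No step here is a genuine obstacle; the whole argument is a few lines of elementary number theory. The only thing worth writing carefully is the clean separation between the existence/uniqueness statement modulo $m$ (coming from Bezout plus the structure of the solution set) and the two one-line inequalities that place $b$ in the required interval. The geometric interpretation as ``closest lattice point below the segment $(0,0)\to(m,n)$'' is an easy consequence: the condition $mb+1=na$ with $1\le a\le m-1$ is precisely the statement that $(a,b)$ lies just below the line $y=nx/m$ with minimal positive vertical gap $1/m$, but this picture need not enter the formal proof.
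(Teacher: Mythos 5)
Your proof is correct and complete. Note that the paper itself does not prove this proposition --- it simply refers the reader to the companion paper \cite{newPleth} --- so there is nothing to compare against line by line; your Bezout argument (general solution $(a_0+km,\,b_0+kn)$ of $na-mb=1$, exclusion of $a\equiv 0\pmod m$, and the two one-line bounds forcing $1\le b\le n-1$) is exactly the standard elementary proof one would expect, and every step checks out.
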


All pairs considered being coprime, we are now in a position to give the definition of the  operators $\Qop_{m,n}$ (restricted for the moment to the coprime case) that is most suitable in the present writing.
\begin{defn} \label{defnQ}
For any coprime pair $(m,n)$, we set
\begin{equation} \label{eqdefnQ}
\Qop_{m,n} := 
\begin{cases}
\frac{1}{M}[\Qop_{c,d},\Qop_{a,b}] & \hbox{if }m>1 \hbox{ and } \spl(m,n)=(a,b)+(c,d), \\[6pt]
\D_n & \hbox{if }m=1.
\end{cases}
\end{equation}
\end{defn}
The combinatorial side of the upcoming conjecture is constructed in \cite{Hikita} by Hikita as the Frobenius characteristic of a bi-graded $S_n$ module whose precise definition is not needed in this development. For our purposes  it is sufficient to directly define the \define{Hikita polynomial}, which we denote by $\Hik_{m,n}[\X;q,t]$,  using a process that   closely follows our present rendition of the right hand side of \pref{HHLRU}. That is, we set 
\begin{equation} \label{defHikita}
\Hik_{m,n}[\X;q,t] :=
\sum_{\park \in\Parking_{m,n} } t^{\area(\park) } q^{\dinv(\park)} F_{\ides(\park)}[\X],
\end{equation}
with suitable definitions for all the ingredients occurring in this formula. We will start  
with the collection of $(m,n)$-\define{parking functions}  which we have denoted $\Parking_{m,n}$. Again, a simple example will   suffice. 

%\begin{figure}[ht]
%\begin{center}
%\dessin{height=1.45 in}{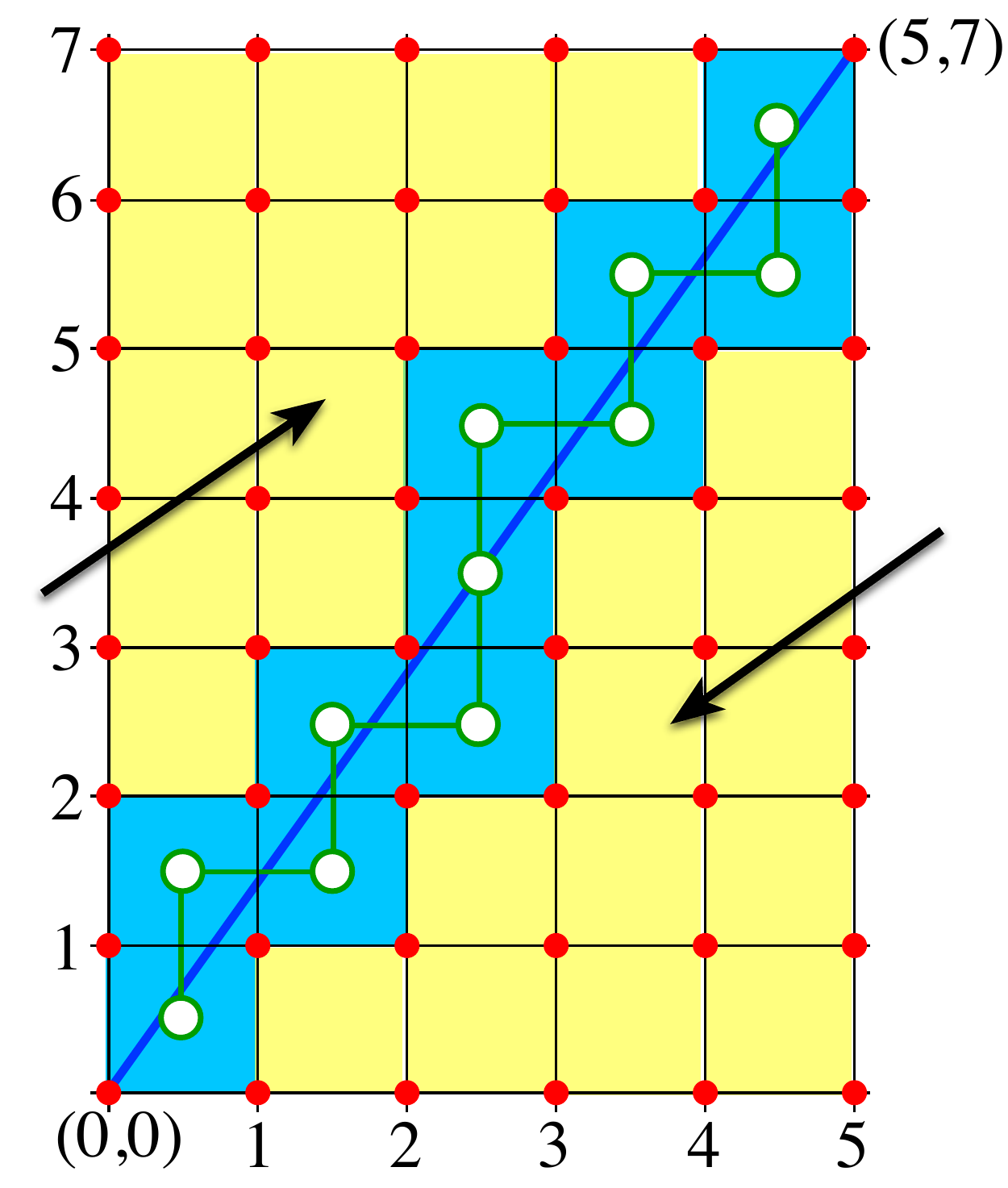}
%\dessin{height=1.45 in}{nicerpath}
%\dessin{height=1.47 in}{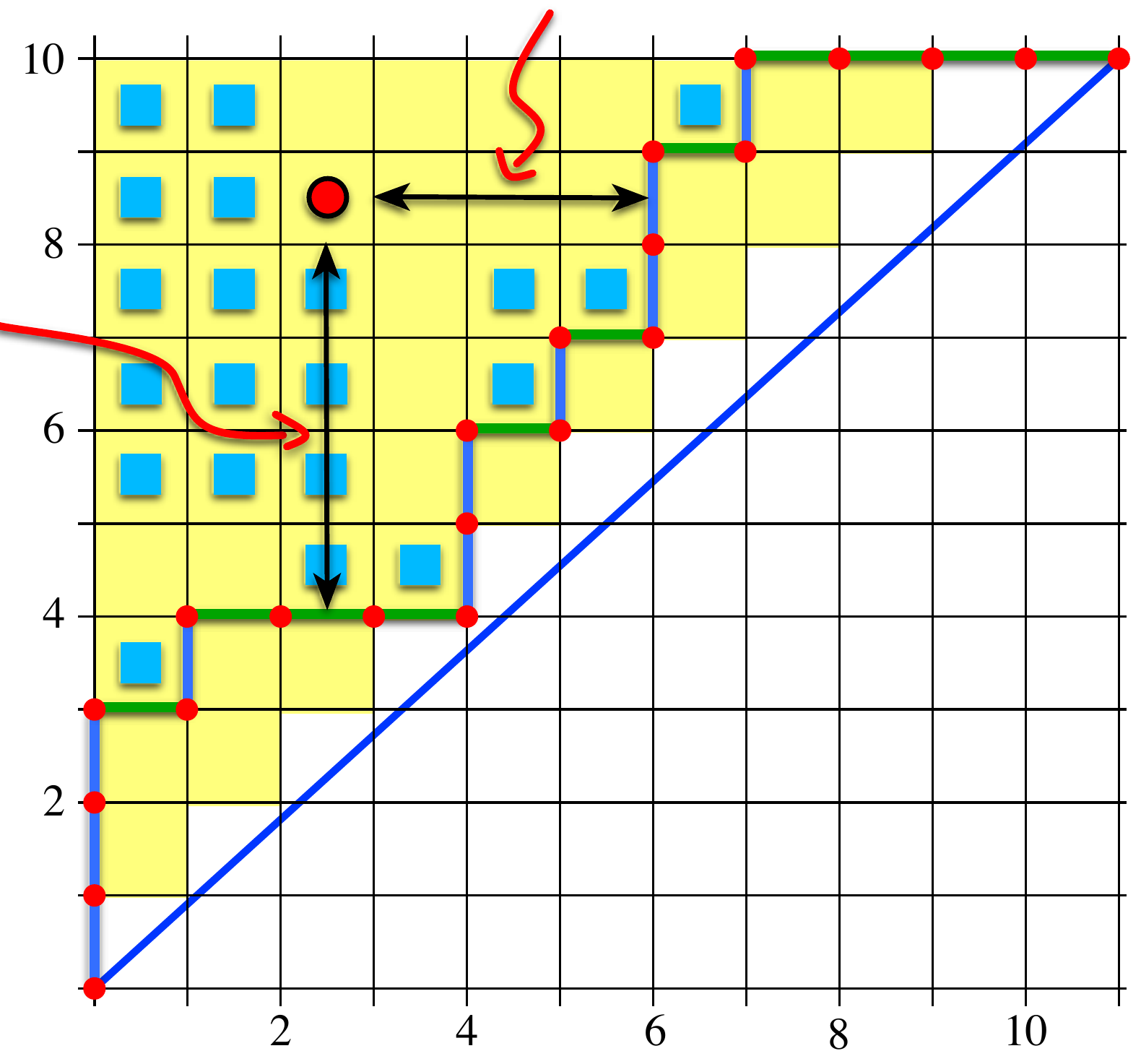}
%\dessin{height=1.45 in}{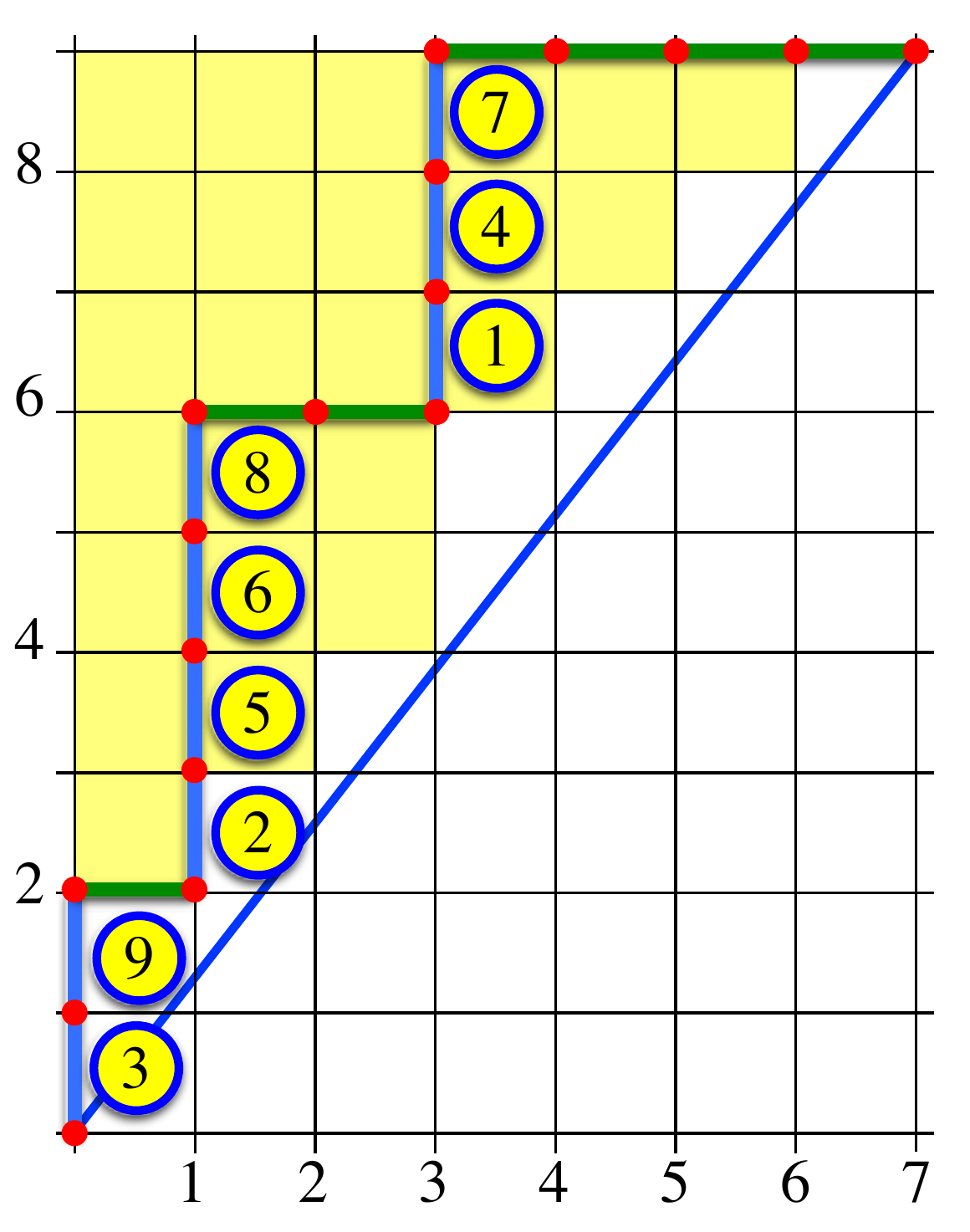}
%\dessin{height=1.45 in}{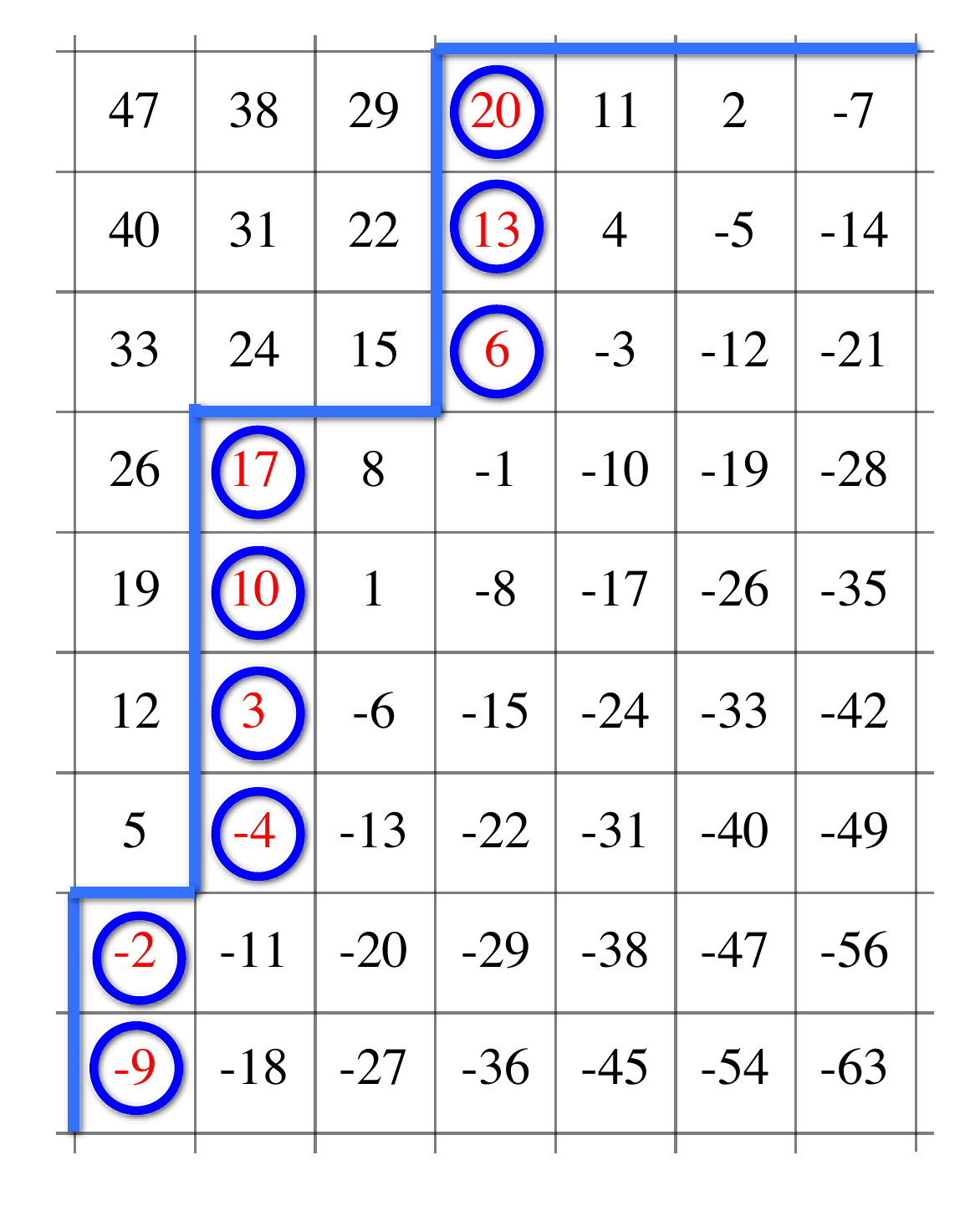}
%\begin{picture}(0,0)(-3,0)\setlength{\unitlength}{5mm}
%\put(-17,7.6){\rotatebox{-15}{\hbox{$\scriptstyle\arm$}}}
%\put(-21,5.1){\rotatebox{30}{\hbox{$\scriptstyle\leg$}}}
%\end{picture}
%\caption{Combinatorial ingredients for the Hikita polynomial.}
%\label{fig:1to5}
%\end{center}
%\end{figure}

\begin{figure}[ht]
\begin{center}\setlength{\unitlength}{5mm}
\dessin{height=2 in}{pcoarea.pdf}\qquad
\dessin{height=2 in}{nicerpath}\qquad
\dessin{height=2 in}{armleg.pdf}\begin{picture}(0,0)(1,10)
\put(-5,20.5){\rotatebox{-15}{\hbox{$\arm$}}}
\put(-11,17){\rotatebox{30}{\hbox{$\leg$}}}
\end{picture}
\caption{First combinatorial ingredients for the Hikita polynomial.}
\label{fig:1to3}
\end{center}
\end{figure}
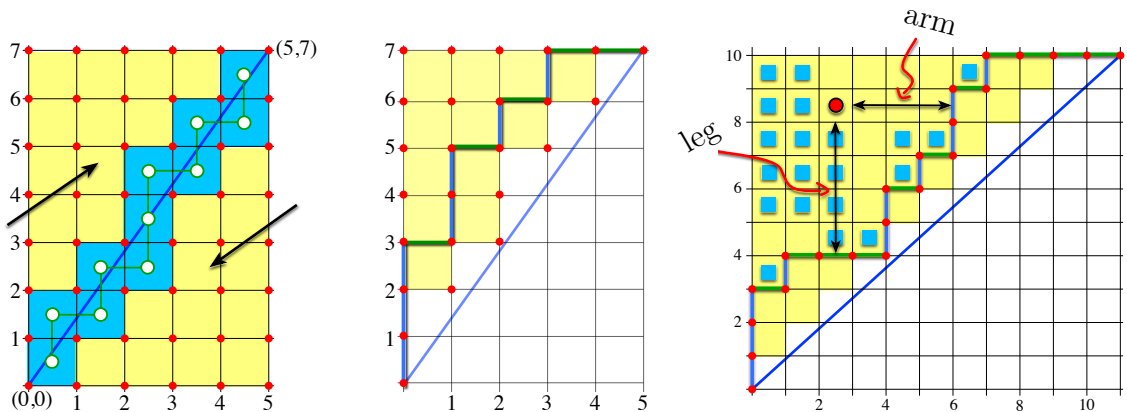

Figures \ref{fig:1to3} and  \ref{fig:4to5} contain all the information needed to construct the polynomial $\Hik_{7,9}[\X;q,t]$. The first object in Figure~\ref{fig:1to3} is a $5 \times 7$ lattice rectangle with its main diagonal $(0,0) \to (5,7)$. In a darker color we have the lattice cells cut by the main diagonal, which we will call the \define{lattice diagonal}. Because of the coprimality of $(m,n)$, the main diagonal, and any line parallel to it, can touch at most a single lattice point inside the $m \times n$ lattice. Thus the main diagonal (except for its end points) remains interior to the lattice cells that it touches. Since the path joining the centers of the touched cells has $n-1$ north steps and $m-1$ east steps, it follows that the lattice diagonal has $m+n-1$ cells. This gives that the number of cells above (or below) the lattice diagonal is $(m-1)(n-1)/2$.

A path in the $m \times n$ lattice that proceeds by north and east steps from $(0,0)$ to $(m,n)$, always remaining weakly above the lattice diagonal, is said to be an $(m,n)$-Dyck path.  For example, the second object in Figure \ref{fig:1to3} is a $(5,7)$-Dyck path. The number of cells between a  path $\path$ and the lattice diagonal is denoted  $\area(\path)$. In the third object of Figure \ref{fig:1to3}, we have an $(11,10)$-Dyck path. Notice that the collection of cells above the path may be viewed as an english Ferrers diagram. We also show there the \define{leg} and the \define{arm} of one of its cells (see Section~\ref{secsymm} for more details). Denoting by $\lambda(\path)$ the Ferrers diagram above the path $\path$, we define
\begin{equation} \label{1.8}
\dinv(\path) := \sum_{c \in \lambda(\path)} \charac\!\left( \frac{\arm(c)}{\leg(c)+1} < \frac{m}{n} < \frac{\arm(c)+1}{\leg(c)}\right).
\end{equation}
As in the classical case an $(m,n)$-parking function is the tableau obtained by labeling the cells east of and adjacent to the north steps of an $(m,n)$-Dyck path with cars $1,2,\dots,n$ in a column-increasing manner. We denote by $\Parking_{m,n}$ the set of $(m,n)$-parking functions. When $(m,n)$ is a pair of coprime integers, it is easy to show that there are $m^{n-1}$ such parking functions. For more on the coprime case, see \cite{armstrong}. We will discuss further aspects of the more general case in \cite{newPleth}, a paper in preparation.
\begin{figure}[ht]
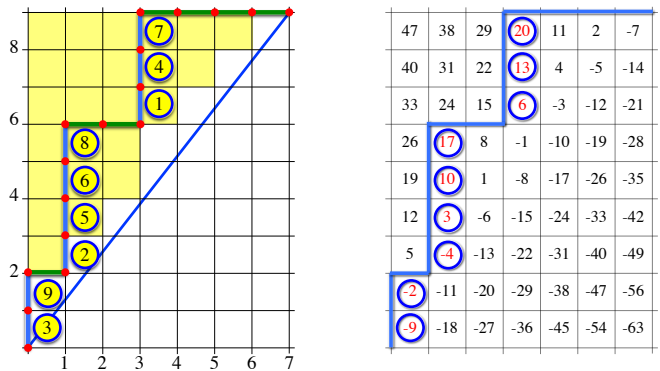

\begin{center}
\dessin{height=2 in}{park79.pdf}\qquad
\dessin{height=2 in}{ranks.pdf}
\caption{Last combinatorial ingredients for the Hikita polynomial.}
\label{fig:4to5}
\end{center}
\end{figure}

The first object in Figure \ref{fig:4to5} gives a $(7,9)$-parking function and the second object gives a $7 \times 9$ table of \define{ranks}. In the general case, this table is obtained by placing in the \define{north-west} corner of the $m\times n$ lattice a number of one's choice. Here we have used $47=(m-1)(n-1)-1$, but the choice is immaterial. We then fill the cells by subtracting $n$ for each east step and adding $m$ for each north step. Denoting by $\rank(i)$ the \define{rank} of the cell that contains car $i$, we define the \define{temporary dinv} of an $(m,n)$-Parking function $\park$ to be the statistic
\begin{equation} \label{1.9}
\tdinv(\park) := \sum_{1\leq i<j \leq n} \charac\!\left( \rank(i)< \rank(j) < \rank(i)+m \right).
\end{equation}

Next let us set for any $(m,n)$-path $\path$
\begin{equation} \label{1.10}
\mdinv(\path) := \max \{ \tdinv(\park) : \path(\park)=\path \},
\end{equation}
where the symbol $\path(\park)=\path$ simply means that $\park$ is obtained by labeling the path $\path$. It will also be convenient to refer to $\path(\park)$ as the \define{support}  of $\park$. This given, we can now set
\begin{equation} \label{1.11}
\dinv(\park) := \dinv(\path(\park)) + \tdinv(\park) - \mdinv(\path(\park)).
\end{equation}
This is a reformulation of Hikita's definition of the \define{dinv} of an $(m,n)$-parking function first introduced by Gorsky and Mazin in \cite{GorskyMazin}.

To complete the construction of the Hikita polynomials we need the notion of the \define{word} of a parking function, which we denote by $\sigma(\park)$. This is the permutation obtained by reading the cars of $\park$ by decreasing ranks. Geometrically, $\sigma(\park)$ can be obtained simply by having a line parallel to the main diagonal sweep the cars from left to right, reading a car the moment the moving line passes through the south end of its adjacent north step. For instance, for the parking function in Figure \ref{fig:4to5} we have $\sigma(\park)= 784615923.$ 

Letting $\ides(\park)$ denote the \define{descent set of the inverse} of the permutation $\sigma(\park)$ and setting, as in the classical case, $\area(\park) = \area(\path(\park))$, we finally have all of the ingredients necessary for \pref{defHikita} to be a complete definition of the Hikita polynomial. The Gorsky-Negut $(m,n)$-Shuffle Conjecture may now be stated as follows.
\begin{conj}[GN-2013] \label{conjNG}
For all coprime pairs of positive integers $(m,n)$, we have
\begin{equation} \label{1.12}
 \Qop_{m,n} \unn = \Hik_{m,n}[\X;q,t].
\end{equation}
\end{conj}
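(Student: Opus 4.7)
The natural first move is to apply Gessel's theorem exactly as in the passage from \pref{HHLRU} to \pref{I.2}: since the right hand side of \pref{1.12} is expressed in the Gessel fundamental basis indexed by $\ides(\park)$, the conjecture becomes equivalent to
$$\langle \Qop_{m,n}\unn,\, h_\mu\rangle \,=\, \sum_{\park \in \Parking_{m,n}} t^{\area(\park)}q^{\dinv(\park)}\,\charac\big(\sigma(\park) \in E_1\shuffle \cdots \shuffle E_k\big)$$
for every partition $\mu=(\mu_1,\ldots,\mu_k)\vdash n$, where the $E_i$ are the consecutive blocks of $1,\ldots,n$ of lengths $\mu_i$. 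This reduces the conjecture to a numerical identity indexed by $\mu$, which is the form best suited to a recursive attack.

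The plan is then to induct on $m$ using the $\spl$ recursion of Definition~\ref{defnQ}. The base case $m=1$ requires proving
$$\D_n\unn \,=\, \Hik_{1,n}[\X;q,t],$$
which I would attempt by unwinding \pref{defD} and matching its expansion term by term with the $(1,n)$-parking function generating series; here the combinatorial side simplifies drastically because there is essentially only one $(1,n)$-Dyck path. For the inductive step, writing $\spl(m,n)=(a,b)+(c,d)$ gives
$$\Qop_{m,n}\unn \,=\, \tfrac{1}{M}\,\Qop_{c,d}\Qop_{a,b}\unn \,-\, \tfrac{1}{M}\,\Qop_{a,b}\Qop_{c,d}\unn,$$
so the inductive hypothesis must be strengthened: the outer operator is applied not to $\unn$ but to $\Qop_{p,q}\unn$, which is no longer a Hikita polynomial. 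One therefore needs a \emph{refined} combinatorial statement interpreting $\Qop_{p,q}f$ for a suitable class of symmetric functions $f$, perhaps indexed by parking data on a truncated subrectangle of $m\times n$ cut at the interior lattice point produced by Proposition~\ref{prop3.3}.

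The main obstacle, as I see it, is combinatorializing the commutator $[\Qop_{c,d},\Qop_{a,b}]$ together with the division by $M=(1-q)(1-t)$. Geometrically one would like the two terms of the commutator to correspond to the two ways of gluing an $(a,b)$-parking configuration to a $(c,d)$-parking configuration along the cut through the closest interior lattice point, with the signed difference producing a sum over $(m,n)$-parking functions after a clean cancellation; the $\tdinv-\mdinv$ correction in \pref{1.11} is presumably what makes this cancellation compatible with the $1/M$ factor. A serious difficulty is that Hikita's $\dinv$ is nonlocal: it is defined via the full rank table of the $m\times n$ rectangle, and the arm/leg inequalities in \pref{1.8} do not split cleanly along the proposed cut, so delicate bookkeeping on how these inequalities transform under the recursion seems unavoidable.

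An alternative route I would pursue in parallel is to use the Schiffmann-Vasserot identification of the algebra generated by the $\D_k$ with a portion of the elliptic Hall algebra, translate the $\spl$ recursion into an identity in the shuffle algebra of symmetric functions, and then transport this identity to the combinatorial side via an explicit bijection between shuffle-algebra monomials and decorated $(m,n)$-paths. This reformulation might make the commutator tractable at the cost of having to prove the bijection preserves $\area$ and the three-term combination $\dinv(\path) + \tdinv - \mdinv$; even a partial success, say confirming \pref{1.12} after specialization at $q=1$ or $t=0$, would provide useful confirmation of the recursive framework before attacking the full statement.
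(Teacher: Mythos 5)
You have set out to prove a statement that the paper itself presents only as a conjecture (the Gorsky--Negut $(m,n)$-Shuffle Conjecture, labelled GN-2013). The paper contains no proof of \pref{1.12}; the only things it establishes about it are the Gessel-theorem reformulation as a family of shuffle identities $\langle \Qop_{m,n}\unn, h_\alpha\rangle = \sum_{\park} t^{\area}q^{\dinv}\charac(\sigma(\park)\in E_1\shuffle\cdots\shuffle E_k)$ --- which is exactly your opening reduction --- and the fact that for $m=n+1$ the left side collapses to $\nabla e_n$ via $\Qop_{n+1,n}=\nabla\D_n\nabla^{-1}$, recovering Conjecture \ref{conjHHLRU}. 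So there is no ``paper's proof'' to compare against, and your text should not be read as closing the statement.

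More importantly, your proposal is a research plan rather than a proof, and the gaps you yourself flag are precisely where the difficulty lives. The induction on $\spl(m,n)=(a,b)+(c,d)$ cannot close as written: the inductive hypothesis only describes $\Qop_{a,b}\unn$ and $\Qop_{c,d}\unn$ separately, whereas the recursion requires understanding $\Qop_{c,d}$ applied to $\Qop_{a,b}\unn$ (and vice versa), and the ``refined combinatorial statement'' you invoke for this is never formulated --- no candidate class of symmetric functions $f$, no combinatorial model for $\Qop_{p,q}f$, no mechanism forcing the commutator applied to a constant to be divisible by $M=(1-t)(1-q)$. The hoped-for cancellation between the two orderings of the commutator, and its compatibility with the nonlocal $\dinv(\path)+\tdinv-\mdinv$ statistic, is asserted as ``presumably'' what happens; that is the conjecture restated, not an argument. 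The base case $m=1$ is likewise only sketched (it does check out: $\D_n\unn=e_n$ and the unique $(1,n)$-parking function has word $n(n-1)\cdots 1$, giving $F_{\{1,\dots,n-1\}}=e_n$, but you should verify rather than defer this). The alternative elliptic-Hall-algebra route suffers from the same issue: the needed bijection preserving $\area$ and the three-term $\dinv$ is exactly the open combinatorial content. In short, every step past the Gessel reduction is an unproven hope, so the proposal does not constitute a proof of \pref{1.12}.
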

Of course we can use the word ``Shuffle'' again since another use of Gessel's theorem allows us to rewrite \pref{1.12} in the equivalent form
$$
\left\langle \Qop_{m,n}\unn , h_\alpha \right\rangle = \hskip -10pt 
\sum_{\park \in\Parking_{m,n}} \hskip -10pt
t^{\area(\park)} q^{\dinv(\park)} \charac \left( \sigma(\park) \in E_1\shuffle\cdots \shuffle E_k \right),\quad\hbox{for all}\quad \alpha\models n,
$$
where $a_i=|E_i|$ when $\alpha=(a_1,\ldots,a_k)$,  and writing $h_\alpha$ for the product $h_{a_1}\cdots h_{a_k}$.
We must point out that it can be shown that \pref{1.12} reduces to \pref{HHLRU} when $m=n+1$. In fact, it easily follows from the definition in \pref{defnQ} that $\Qop_{n+1,n} = \nabla \D_n \nabla^{-1}.$ This, together with the fact that $\nabla^{-1}\un = 1$ and the definition in \pref{defD}, yields $\Qop_{n+1,n} \unn = \nabla e_n.$ The equality of the right hand sides of \pref{1.12} and \pref{HHLRU} for $m=n+1$ is obtained by a combinatorial argument which is not too difficult.

%%%%%%%%%%%%%%%%%%%%%%%%%%%%%%%%%%
\pagebreak
\section{Our Compositional \texorpdfstring{$(km,kn)$}--Shuffle Conjectures}
The present developments result from theoretical and computer explorations of what takes place in the non-coprime case. Notice first that there is no difficulty in extending the definition of parking functions to the  
$km \times kn$ lattice square, including the $\area$ statistic. Problems arise in extending the definition of the $\dinv$ and $\sigma$ statistics. Previous experience strongly suggested to use the symmetric function side as a guide to the construction of these two statistics. We will soon see that we may remove the coprimality condition in the definition of the $\Qop$ operators, thus allowing us to consider operators  $\Qop_{km,kn}$ which, for $k>1$,  may be simply obtained by bracketing two $\Qop$ operators indexed by coprime pairs. 
However one quickly discovers, by a simple parking function count, that  these $\Qop_{km,kn}$ operators do not provide the desired symmetric function side.

Our search for the natural extension  
of the symmetric function side led us to focus on the following general construction of symmetric function operators indexed by non-coprime pairs $(km,kn)$. 
 This construction is based 
on a simple   commutator  identity satisfied by the operators 
$D_k$ and $D_k^*$ which shows that the $Q_{0,k}$ operator
is none other than multiplication by a  rescaled plethystic version of the ordinary symmetric function $h_k$. 
This implies that
the family 
 $\{\prod_i \Qop_{0,\lambda_i} \}_\lambda$
is a basis for the space of symmetric functions (viewed as multiplication operators).
Our definition also  uses  a  commutativity property 
(proved in [4]) between $\Qop$ operators indexed by collinear vectors, {\em i.e.} $\Qop_{km,kn}$ and $\Qop_{jm,jn}$ commute for all $k$, $j$, $m$ and $n$ (see Theorem~\ref{thmQind}). 
For our purpose, it is convenient to denote by  $\underline{f}$ the operator of \define{multiplication by} $f$ for any symmetric function $f$. 
We can now give our general construction.
\begin{alg} \label{algF}
Given any symmetric function $f$ that is homogeneous of degree $k$, and any coprime pair  $(m,n)$, we  proceed as follows
\begin{enumerate}\itemsep=-4pt
\item[]{\bf Step 1:} calculate the expansion
\begin{equation}
    f = \sum_{\lambda \vdash k} c_\lambda(q,t)\, \prod_{i=1}^{\ell(\lambda)} \Qop_{0,\lambda_i},
\end{equation}
\item[]{\bf Step 2:} using the coefficients $c_\lambda(q,t)$,  set 
\begin{equation} \label{defnF}
\Bf_{km,kn} := \sum_{\lambda \vdash k} c_\lambda(q,t) \prod_{i=1}^{\ell(\lambda)}\Qop_{m \lambda_i, n\lambda_i}.
\end{equation}
\end{enumerate}
\end{alg}

%\begin{figure}[ht]
%\begin{center}
%\includegraphics[height=2.7 in]{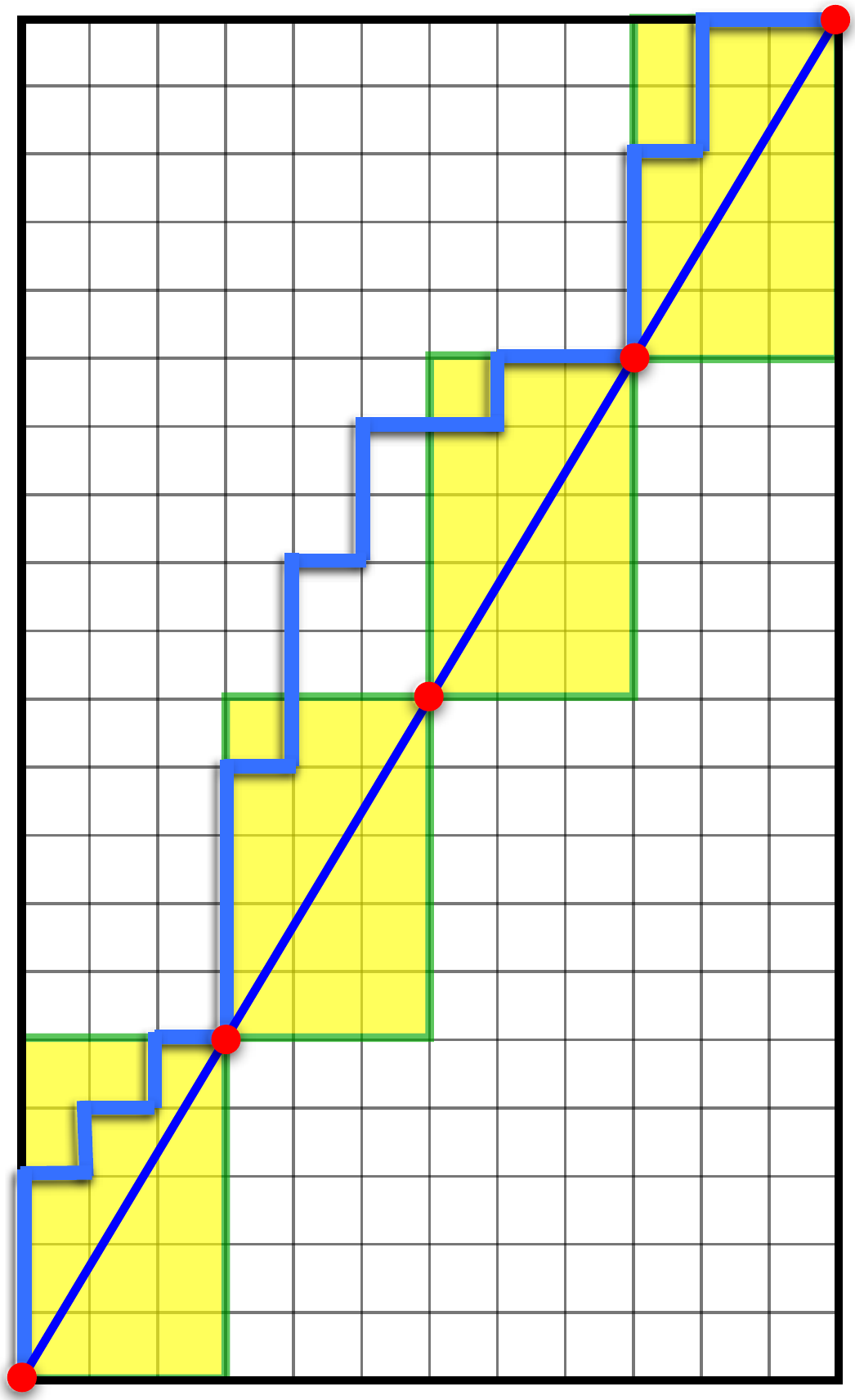}
%\caption{A $12 \times 20$ Dyck path with composition $(1,2,1)$.}
%\label{multimn}
%\end{center}
%\end{figure}
Theoretical considerations reveal, and extensive computer experimentations confirm, that the operators that we should use to extend  the \define{rational parking function} theory to all pairs $(km,kn)$, are none other than the operators $\Be_{km,kn}$ obtained by taking $f=e_k$  in Algorithm~\ref{algF}. This led us to look for the construction of natural extensions of the definitions of $\dinv(\park)$ and $\sigma(\park)$, that would ensure the validity of the following sequence of increasingly refined conjectures. The coarsest one of which is as follows.%identity 
\begin{conj}\label{conjE} For all coprime pair of positive integers $(m,n)$, and any $k\in\N$, we have
\begin{equation} \label{2.3}
\Be_{km,kn}\cdot {(-\mathbf{1})^{k(n+1)}}= \sum_{\park \in\Parking_{km,kn}}
t^{\area(\park)} q^{\dinv(\park)} F_{\ides(\park)},
\end{equation}
\end{conj}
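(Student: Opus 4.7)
The idea is to reduce the stated equality to its shuffle form via Gessel's theorem, and then to derive matching recursions in $k$ for both sides, using the Gorsky--Negut Conjecture (Conjecture~\ref{conjNG}) as the base case $k=1$; the entire argument is therefore conditional on that conjecture, which was open at the time of writing. Under Gessel's theorem, identity (\ref{2.3}) becomes the family of scalar identities
\[
\left\langle \Be_{km,kn}\cdot(-\mathbf{1})^{k(n+1)},\, h_\alpha \right\rangle = \sum_{\park\in\Parking_{km,kn}} t^{\area(\park)}\, q^{\dinv(\park)}\, \charac\big(\sigma(\park)\in E_1\shuffle\cdots\shuffle E_\ell\big),
\]
indexed by compositions $\alpha=(a_1,\dots,a_\ell)\models kn$ with $|E_i|=a_i$. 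The plan is then to match both sides term by term.

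For the algebraic side I would exploit the classical identity $e_k=\sum_{i=1}^{k}(-1)^{i-1}h_i\,e_{k-i}$ (equivalent to $E(z)H(-z)=1$) together with the observation that each $\Qop_{0,j}$ is, up to a $q,t$-scalar, multiplication by a plethystic $h_j$, so that the $h$-product basis coincides up to rescaling with the basis $\{\prod \Qop_{0,\lambda_i}\}$ used in Algorithm~\ref{algF}. Transporting that recursion through Algorithm~\ref{algF}, and invoking the collinear commutation of Theorem~\ref{thmQind} to freely reorder the resulting factors, yields an operator recursion of the form
\[
\Be_{km,kn} = \sum_{i=1}^{k}(-1)^{i-1}\gamma_i(q,t)\,\Qop_{im,in}\,\Be_{(k-i)m,(k-i)n},
\]
with explicit scalars $\gamma_i(q,t)$. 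Applied to $(-\mathbf{1})^{k(n+1)}$ and combined with the inductive hypothesis, this expresses the left-hand side of (\ref{2.3}) in terms of the coprime-case operators $\Qop_{im,in}$ acting on the extended Hikita-like polynomials at level $k-i$.

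The combinatorial side must then be decomposed to match. The natural candidate is a \emph{first-return} splitting of an $(km,kn)$-parking function $\park$ along the main diagonal: cut at the first lattice point of the main diagonal touched by the supporting path, producing an $(im,in)$-head and a $((k-i)m,(k-i)n)$-tail. The statistics $\area$, $\dinv$ and $\ides$ should factor additively across this splitting up to controlled corrections; the alternating signs $(-1)^{i-1}$ on the algebraic side would then cancel, via a sign-reversing involution on parking functions whose head is itself further reducible, leaving only the \emph{irreducible}-head contribution to match the action of $\Qop_{im,in}$ on the tail.

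\textbf{The principal obstacle} is the combinatorial step just outlined. In the coprime case, each line parallel to the main diagonal touches at most one interior lattice point, which is precisely what makes the reading order defining $\sigma(\park)$ and the counting of dinv-pairs unambiguous; for $(km,kn)$ with $k>1$ entire lattice lines of collinear points appear, forcing tie-breaking rules both in the definition of $\sigma$ and in the dinv count. Only one such convention is expected to be compatible with the algebraic recursion above, and identifying it --- in particular making the required sign-reversing involution explicit --- is the main technical challenge. A secondary but serious difficulty is that the base case (Conjecture~\ref{conjNG}) is itself a deep open problem, so the plan establishes Conjecture~\ref{conjE} only relative to it.
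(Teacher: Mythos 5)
First, be aware that the statement you set out to prove is Conjecture~\ref{conjE}: the paper contains no proof of it. It is offered as a conjecture, supported by computer experiments and by the observation that it would follow from the finer Conjecture~\ref{conjBGLX} through the decompositions \pref{edecompE} and \pref{2.5}. So there is no argument in the paper to compare yours against, and your proposal --- conditional, as you acknowledge, on the open Conjecture~\ref{conjNG} --- has to be judged on its own merits. It does not close the gap.

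The decisive flaw is in the algebraic half. Your proposed recursion
$\Be_{km,kn}=\sum_{i=1}^{k}(-1)^{i-1}\gamma_i(q,t)\,\Qop_{im,in}\,\Be_{(k-i)m,(k-i)n}$
rests on the claim that the basis $\{\prod_i\Qop_{0,\lambda_i}\}$ coincides ``up to rescaling'' with the $h$-product basis. It does not: $\Qop_{0,j}$ is multiplication by $\frac{qt}{qt-1}\,h_j\!\left[\frac{1-qt}{qt}\,\X\right]$, and this plethysm is a genuine change of alphabet --- by \pref{5.9} it is a sum of all $j$ hook Schur functions $s_{j-k,1^k}$, whereas $h_j=s_j$ --- so the convolution $E(z)H(-z)=1$ does not transport. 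Indeed, a generating-function check shows that a recursion $e_k=\sum_i\gamma_i\,\Qop_{0,i}\cdot e_{k-i}$ holding for all $k$ would force each $\Qop_{0,j}$ to be multiplication by a scalar multiple of $h_j$, which fails for $j\geq 2$; and for a fixed $k\geq 4$ the ansatz is an overdetermined linear system ($k$ unknowns against $p(k)$ coefficients in the commutative algebra generated by the $\Qop_{im,in}$) with no reason to be consistent. The correct expansion is the full sum over partitions of $k$ with coefficients $m_\lambda[qt/(qt-1)]$, as in \pref{4.16}, so the two-term inductive structure your plan depends on does not exist in the form stated. The combinatorial half is in no better shape: the first-return splitting, the additivity of $\area$, $\dinv$ and $\ides$ across it, and the sign-reversing involution are all postulated rather than constructed, and you yourself flag as unresolved the tie-breaking problem for collinear rank lines --- precisely the issue the paper addresses with the rank perturbation $u_i/(km+1)$ in Algorithm~\ref{algL}. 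As written, this is a research programme, not a proof, and it should be presented as such.
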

\begin{wrapfigure}[12]{r}{4cm} \centering
 % \vskip-5pt
     \dessin{height=2.3 in}{multimn.pdf}
%\label{multimn}
\end{wrapfigure}
To understand our first refinement, we focus on a special case. In the figure displayed on the right, we have depicted a $12 \times 20$ lattice. The pair in this case has a $\gcd$ of $4$. Thus here $(m,n)=(3,5)$ and $k=4$. Note that in the general case, $(km,kn)$-Dyck paths can hit the diagonal in $k-1$ places within the $km\times kn$ lattice square. In this case, in $3$ places. We have depicted here a Dyck path which hits the diagonal in the first and third places.

At this point the classical decomposition (discovered in \cite{qtCatPos})  
\begin{equation}\label{edecompE}\correction
e_k= E_{1,k}+E_{2,k}+\cdots +E_{k,k},
\end{equation}
combined with extensive computer experimentations, suggested that we have the following refinement of Conjecture~\ref{conjE}.
\begin{conj}\label{conjEr} For all coprime pair of positive integers $(m,n)$, all $k\in\N$, and if $1\leq r\leq k$, we have
\begin{equation} \label{2.4}
 \BE_{km,kn}^{r}\cdot (-\mathbf{1})^{k(n+1)} = \sum_{\park \in\Parking_{km,kn}^{r}}
t^{\area(\park)} q^{\dinv(\park)} F_{\ides(\park)},
\end{equation}
where $\BE_{km,kn}^{r}$ is the operator obtained by setting $f=E_{k,r}$ in Algorithm~\ref{algF}. 
Here $\Parking_{km,kn}^{r}$ denotes the set of parking functions, in the $km \times kn$ lattice, whose Dyck path hit the diagonal in $r$ places {\rm (}including $(0,0)${\rm )}. 
\end{conj}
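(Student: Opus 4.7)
The natural attack on Conjecture~\ref{conjEr} proceeds in two stages: first a consistency check with Conjecture~\ref{conjE}, then a refinement. Summing \pref{2.4} over $r=1,\ldots,k$ must give back \pref{2.3}; on the RHS this is tautological, since $\Parking_{km,kn}=\bigsqcup_r \Parking_{km,kn}^{r}$ by the very definition of the diagonal-hit stratification, while on the LHS it follows from the linearity of Algorithm~\ref{algF} applied to the classical decomposition \pref{edecompE}. This tells us the symmetric-function stratification $\{E_{k,r}\}_r$ is the right partner of the combinatorial stratification $\{\Parking_{km,kn}^{r}\}_r$; the content of \pref{2.4} is that the matching is term-by-term.

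For the actual equality I would proceed by induction on $k$, with base case $k=1$ furnished by the Gorsky-Negut Conjecture~\ref{conjNG} (which must be assumed). For the inductive step I would use two ingredients. First, the collinear commutativity (Theorem~\ref{thmQind}) guarantees that Algorithm~\ref{algF} is well defined and that the expansion of $E_{k,r}$ in the basis $\{\prod_i \Qop_{0,\lambda_i}\}_\lambda$ transports cleanly to $\BE_{km,kn}^{r}=\sum_\lambda c_\lambda(q,t)\prod_i \Qop_{m\lambda_i,n\lambda_i}$. Second, Gessel's theorem lets us reduce \pref{2.4} to the shuffle form
\begin{equation*}
\bigl\langle \BE_{km,kn}^{r}\!\cdot(-\mathbf{1})^{k(n+1)},\, h_\alpha \bigr\rangle
\, = \!\!\sum_{\park\in\Parking_{km,kn}^{r}}\!\!\! t^{\area(\park)} q^{\dinv(\park)}\charac\!\bigl(\sigma(\park)\in A_1\shuffle\cdots\shuffle A_\ell\bigr),
\end{equation*}
for each $\alpha=(a_1,\ldots,a_\ell)\models kn$, where $A_i$ is the $i$th consecutive segment of $1\,2\cdots kn$ of length $a_i$. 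The goal is then to construct a recursive decomposition of each $\park\in\Parking_{km,kn}^{r}$ into an ordered $r$-tuple of parking functions living in collinear sub-rectangles, one per block between consecutive diagonal hits, matched with the operator product $\prod_i \Qop_{m\lambda_i, n\lambda_i}$ appearing in Algorithm~\ref{algF}, with $\lambda$ the partition recording the block sizes.

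The main obstacle is the $\dinv$ statistic. The statistics $\area$ and $\sigma$ factor transparently across blocks: $\area$ sums block contributions, and because the diagonal sweep that defines $\sigma$ reads each block entirely before moving to the next, the word $\sigma(\park)$ is the concatenation of the block words. By contrast, $\dinv(\park)$ is assembled from the rank inequalities of \pref{1.9}--\pref{1.11} in the \emph{global} $km\times kn$ lattice, and there is no obvious reason its contribution splits along diagonal hits. Making the bijection respect $\dinv$ will require a delicate global-to-local analysis extending the $\BC_a$-type manipulations of Haglund-Morse-Zabrocki from the classical $m=n+1$ setting to arbitrary coprime $(m,n)$, and will presumably force a further compositional refinement of $\BE_{km,kn}^{r}$ via bracket identities among the $\Qop_{jm,jn}$, paralleling the role of $\BC_\alpha$ in Conjecture~\ref{conjHMZC}. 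The secondary difficulty is of course that the base case Conjecture~\ref{conjNG} itself remains open, so any such induction would in any case be conditional.
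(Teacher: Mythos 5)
The statement you are asked to prove is Conjecture~\ref{conjEr}, which the paper does not prove: it is stated as an open conjecture, supported only by computer experimentation and by its consistency with the coarser Conjecture~\ref{conjE} and the finer Conjecture~\ref{conjBGLX}. There is therefore no proof in the paper to compare your argument against, and your proposal — quite properly — does not claim to close the conjecture either. The one piece of your text that is genuinely verifiable, namely that summing \pref{2.4} over $r=1,\dots,k$ recovers \pref{2.3} because $e_k=E_{1,k}+\cdots+E_{k,k}$ and Algorithm~\ref{algF} is linear in $f$, while $\Parking_{km,kn}=\bigsqcup_r\Parking_{km,kn}^{r}$ on the combinatorial side, is exactly the remark the paper itself makes immediately after stating the conjecture. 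Your reduction of the base case $k=1$ to Conjecture~\ref{conjNG} is also correct (for $k=1$ one has $E_{1,1}=e_1=-\Qop_{0,1}$, so $\BE_{m,n}^{1}\cdot(-\mathbf{1})^{n+1}=\Qop_{m,n}\cdot(-\mathbf{1})^{n}$), but that base case is itself open.

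The substantive gap, which you yourself flag, is that the entire inductive step is aspirational: you do not exhibit the recursive decomposition of a parking function in $\Parking_{km,kn}^{r}$ into blocks matched with the operator product $\prod_i\Qop_{m\lambda_i,n\lambda_i}$, and you correctly observe that $\dinv$ — being defined through global rank inequalities in the $km\times kn$ lattice — has no evident block-additive structure, so the key lemma of any such induction is missing. Note also that the paper's own strategy for attacking \pref{2.4} is not induction on $k$ but further refinement: it passes to the compositional operators $\BC_{km,kn}^{(\alpha)}$ via $E_{k,r}=\sum_{\alpha\models k}\BC_{\alpha_1}\cdots\BC_{\alpha_r}\un$ (equation \pref{2.5}), so that Conjecture~\ref{conjEr} becomes a consequence of Conjecture~\ref{conjBGLX} by summation over compositions $\alpha\models k$ with $\ell(\alpha)=r$ — the opposite direction from proving the coarse statement directly. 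In short: your proposal is a reasonable plan of attack and an accurate account of where the difficulties lie, but it is not a proof, and none exists in the paper.
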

Clearly, \pref{edecompE} implies that Conjecture~\ref{conjE} follows from Conjecture~\ref{conjEr}. For example, the parking functions supported by the path in the above figure would be picked up by the operator $\BE_{4\times 3,4\times 5 }^{3}$.

Our ultimate refinement is suggested by the decomposition (proved in \cite{classComp})
\begin{equation} \label{2.5}
E_{k,r} = \sum_{\alpha \models k} 
C_{\alpha_1}C_{\alpha_2}\cdots C_{\alpha_r}
 \un.
\end{equation}
What emerges is the following most general conjecture that clearly subsumes our two previous conjectures, as well as Conjectures~\ref{conjHHLRU}, \ref{conjHMZC}, and \ref{conjNG}. 
\begin{conj}[Compositional $(km,kn)$-Shuffle Conjecture] \label{conjBGLX}
For all compositions $\alpha=(a_1,a_2, \dots ,a_r)\models k$ we have 
\begin{equation} \label{2.6}
\BC_{km,kn}^{(\alpha)}\cdot(-\mathbf{1})^{k(n+1)}
= \sum_{\park \in\Parking_{km,kn}^{(\alpha)}}
t^{\area(\park)} q^{\dinv(\park)} F_{\ides(\park)},
\end{equation}
where $\BC_{km,kn}^{(\alpha)}$ is the operator obtained by setting $f=\BC_\alpha\un$ in Algorithm~\ref{algF} and $\Parking_{km,kn}^{(\alpha)}$ denotes the collection of parking functions in the $km\times kn$ lattice whose Dyck path hits the diagonal according to the composition $\alpha$.
\end{conj}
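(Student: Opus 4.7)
The plan is to proceed in three stages. First, apply Gessel's theorem, exactly as in the passages from \pref{HHLRU} to \pref{I.2} and from Conjecture~\ref{conjHMZC} to \pref{I.6}, to reformulate \pref{2.6} as the family of shuffle identities
\begin{equation*}
\langle \BC_{km,kn}^{(\alpha)}\cdot(-\mathbf{1})^{k(n+1)},\, h_\mu \rangle \,=\, \sum_{\park\in\Parking_{km,kn}^{(\alpha)}} t^{\area(\park)}\, q^{\dinv(\park)}\, \charac\bigl(\sigma(\park)\in E_1\shuffle\cdots\shuffle E_j\bigr),
\end{equation*}
indexed by $\mu=(\mu_1,\ldots,\mu_j)\vdash kn$ with $|E_i|=\mu_i$. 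A very appealing feature of \pref{2.6} is the built-in hierarchy of specializations: summing it over all $\alpha\models k$ with $\ell(\alpha)=r$ recovers Conjecture~\ref{conjEr} via \pref{2.5}, summing further over $r$ yields Conjecture~\ref{conjE} via \pref{edecompE}, and setting $k=1$ specializes to the Gorsky-Negut Conjecture~\ref{conjNG}. Verifying these reductions at the level of rigorous implications is both a sanity check and the natural entry point for an inductive argument.

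Second, on the algebraic side, I would unfold Algorithm~\ref{algF} applied to the input $f=\BC_\alpha\cdot\mathbf{1}$. Using the HMZ formula \pref{defC}, one produces an explicit plethystic description of $\BC_\alpha\cdot\mathbf{1}$, and expanding it in the basis $\{\prod_i\Qop_{0,\lambda_i}\}_\lambda$ yields closed coefficients $c_\lambda(q,t)$; consequently $\BC_{km,kn}^{(\alpha)}$ becomes an explicit $(q,t)$-combination of products $\prod_i\Qop_{m\lambda_i,n\lambda_i}$. Combining the commutator recursion \pref{eqdefnQ} with the commutativity of collinear $\Qop$-operators (Theorem~\ref{thmQind}), the goal is to derive a recursion on the length $r$ of $\alpha$: peel off the last part $a_r$ and express $\BC_{km,kn}^{(\alpha)}\cdot(-\mathbf{1})^{k(n+1)}$ in terms of $\BC_{(k-a_r)m,(k-a_r)n}^{(\alpha')}\cdot(-\mathbf{1})^{(k-a_r)(n+1)}$ acted on by a single factor $\Qop_{a_r m,\, a_r n}$.

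Third, on the combinatorial side, the task is to extend \pref{1.8}--\pref{1.11} so that $\dinv(\park)$ and $\sigma(\park)$ are well-defined on $\Parking_{km,kn}$. The natural candidate uses the same rank table, constructed by subtracting $kn$ for each east step and adding $km$ for each north step, accepting the $k-1$ rank collisions forced by the interior lattice points on the main diagonal. With those definitions in place, the hit-pattern stratification $\Parking_{km,kn}=\bigsqcup_\alpha\Parking_{km,kn}^{(\alpha)}$ should afford a combinatorial recursion obtained by cutting a parking function at its first return to the diagonal, and this has to be matched termwise---within each shuffle class indexed by $\mu$---to the algebraic recursion above.

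The chief obstacle is the base case. When $k=1$, Conjecture~\ref{conjBGLX} degenerates to the (still open) rational shuffle Conjecture~\ref{conjNG}, so any complete proof of \pref{2.6} contains one of the full Gorsky-Negut statement. Realistic interim targets are therefore (i) to verify that the extended $\dinv$ and $\sigma$ make the right-hand side of \pref{2.6} a genuine symmetric function, (ii) to establish rigorously the specializations listed in the first paragraph, and (iii) to check \pref{2.6} symbolically for all small $(k,m,n,\alpha)$. The decisive step---matching the two recursions---will almost certainly require the Elliptic Hall algebra relations of \cite{elliptic2,SchiffVassMac,SchiffVassK} and the shuffle-algebra framework developed in \cite{GorskyNegut,NegutFlags,NegutShuffle}.
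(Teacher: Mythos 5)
The statement you are trying to prove is a \emph{conjecture}: it is the main new conjecture of the paper, and the authors offer no proof of it, only supporting evidence (consistency of specializations, computer verification for small cases, and the ``inevitability'' argument showing that the operators $\BC_{km,kn}^{(\alpha)}$ refine $\Be_{km,kn}$ via $\Be_{km,kn}=\sum_{\alpha\models k}\BC_{km,kn}^{(\alpha)}$). So there is no proof in the paper to compare yours against, and your own plan is, as you candidly admit, a research program rather than a proof: its base case $k=1$ is exactly the open Gorsky--Negut Conjecture~\ref{conjNG}. Your first stage (the Gessel reformulation and the chain of reductions $\pref{2.6}\Rightarrow\pref{2.4}\Rightarrow\pref{2.3}$ via \pref{2.5} and \pref{edecompE}) is correct and matches what the paper itself asserts; those reductions go in the direction of showing the conjecture is the \emph{strongest} of the family, not toward proving it.

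Two concrete technical points where your plan diverges from, or falls short of, what the paper actually sets up. First, your combinatorial stage proposes ``accepting the $k-1$ rank collisions''; this does not yield a well-defined word $\sigma(\park)$ (reading cars by decreasing rank requires a total order), and the paper explicitly breaks the ties by perturbing the rank of the $i$-th north step to $km(i-1)-kn\,u_i+u_i/(km+1)$ in Algorithm~\ref{algL}. The resolution of these conflicts is precisely the new content needed in the non-coprime case, so it cannot be waved away. Second, your proposed length recursion on $\alpha$ ``peel off $a_r$ and act by a single $\Qop_{a_r m,\,a_r n}$'' does not follow from Algorithm~\ref{algF}: the correspondence $f\mapsto\Bf_{km,kn}$ is a linear map defined on the commutative algebra spanned by the products $\prod_i\Qop_{0,\lambda_i}$, and $\BC_{\alpha}\un=\BC_{a_1}(\BC_{a_2}\cdots\BC_{a_r}\un)$ is not a product inside that algebra, so there is no reason the image of $\BC_\alpha\un$ factors as $\Qop_{a_r m,a_r n}$ applied to the image of a shorter composition. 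Establishing any such recursion compatible with cutting a parking function at its first diagonal return is the genuinely missing idea, and even granting it, the argument would still bottom out at the unproven coprime case.
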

For example, the parking functions supported by the path in the above figure would be picked up by the operator $\BC_{4\times 3 ,4\times 5}^{(1,2,1)}$. 
We will later see that an analogous conjecture may be stated for the operator $\BB_{km,kn}^{(\alpha)}$ obtained by taking $f=\BB_\alpha\un$ in our general Algorithm~\ref{algF}, with $\alpha$ any composition of $k$.  

We  will make extensive use in the sequel of a collection of results stated and perhaps even proved in the works of Schiffmann and Vasserot.
%(see \cite{elliptic2}), \cite{SchiffVassMac}, \cite{SchiffVassK}). 
Unfortunately most of this material is written in a language that is nearly inaccessible to most practitioners of Algebraic Combinatorics. We were fortunate that the two young  researchers E. Gorsky and A. Negut, in a period of several months, made us aware of some of the contents of the latter publications as well as the results in their papers (\cite{GorskyNegut}, \cite{NegutFlags} and \cite{NegutShuffle}) in a language we could understand. The present developments are based on these results. Nevertheless, for sake of completeness we have put together in \cite{newPleth} a purely Algebraic Combinatorial treatment of all the background needed here with proofs that use only the Macdonald polynomial ``tool kit'' derived in the 90's in \cite{SciFi}, \cite{IdPosCon}, \cite{plethMac} and\cite{explicit},  with some additional identities discovered in \cite{HLOpsPF}.

The remainder of this paper is divided into three further sections. In the next section we review some  notation and recall some identities from Symmetric Function Theory, and our Macdonald polynomial tool kit. 
%We then give a precise definition of the $\Qop_{m,n}$ operators for coprime pairs $(m,n)$.  
This done, we state some basic identities that will be instrumental in extending the definition of the $\Qop$ operators to the non-coprime case. 
In the following section we describe how the modular group $\SL_2(\mathbb{Z})$ acts on the operators $\Qop_{m,n}$ and use this action to justify our definition of the operators $\Qop_{km,kn}$. Elementary proofs that justify the uses we make of this action are given in \cite{newPleth}. Here we also  show  how these operators can be efficiently programmed on the computer. This done, we give a precise construction of the operators $\BC_{km,kn}^{(\alpha)}$ and $\BB_{km,kn}^{(\alpha)}$, and workout some examples. We also give a compelling argument which shows the inevitability of Conjecture \ref{conjBGLX}.
In the last section we complete our definitions for all the combinatorial ingredients occurring in the right hand sides of \pref{2.3}, \pref{2.4} and  \pref{2.6}. Finally, we derive some consequences of our conjectures and discuss some possible further extensions.

%%%%%%%%%%%%%%%%%%%%%%%%%%%%%%%%%%

\section{Symmetric function basics, and necessary operators}\label{secsymm}
In dealing with symmetric function identities,  especially those arising in the theory of Macdonald Polynomials, it is convenient and often indispensable to use plethystic notation. This device has a straightforward definition which can be implemented almost verbatim in any computer algebra software. We simply set for any expression $E = E(t_1,t_2 ,\dots )$ and any symmetric function $f$
\begin{equation} \label{3.1}
f[E] := \Qop_f(p_1,p_2, \dots )
\Big|_{p_k \to E( t_1^k,t_2^k,\dots )},
\end{equation}
where $(-)\big|_{p_k \to E( t_1^k,t_2^k,\dots )}$ means that we replace each $p_k$ by $E( t_1^k,t_2^k,\dots )$, for $k\geq 1$.
Here $\Qop_f$ stands for the polynomial yielding the expansion of $f$ in terms of the power basis. We say that we have a \define{plethystic substitution} of $E$ in $f$. 

The above definition of plethystic substitutions implicitly requires  that
$p_k[-E]= -p_k[E]$, and we say that this is the \define{plethystic} minus sign rule. This notwithstanding, we also need to carry out  ``ordinary'' changes of signs. To distinguish the later from the plethystic minus sign, we obtain the \define{ordinary} sign change by multiplying our expressions by a new variable ``$\epsilon$'' which, outside of the plethystic bracket, is replaced by $-1$. Thus we have
\begin{displaymath}
p_k[\epsilon E]= \epsilon^k p_k[ E]= (-1)^k p_k[E].
\end{displaymath}
In particular we see that, with this notation, for any expression $E$ and any symmetric function $f$ we have
\begin{equation} \label{3.2}
(\omega f)[E]= f[-\epsilon E],
\end{equation}
where, as customary, ``$\omega$'' denotes the involution that interchanges the elementary and homogeneous symmetric function bases.
Many symmetric function identities can be considerably simplified by means of the $\Omega$-notation, allied with plethystic calculations. For any expression $E = E(t_1,t_2,\cdots )$ set
\begin{displaymath}
\Omega[E] := \exp\! \left(
\sum_{k \geq 1}{p_k[E]\over k}
\right) = \exp \! \left(
\sum_{k \geq 1} \frac{E(t_1^k,t_2^k,\cdots )}{k} 
\right).
\end{displaymath}
In particular, for $\X=x_1+x_2+\cdots$, we see that
\begin{equation} \label{3.3}
\Omega[z\X]= \sum_{m \geq 0} z^m h_m[\X]
\end{equation}
and for $M=(1-t)(1-q)$ we have
\begin{equation} \label{3.4}
\Omega[-uM] = \frac{(1-u)(1-qtu)}{(1-tu)(1-qu)}.
\end{equation}

\begin{wrapfigure}[7]{r}{4.5cm} \centering
   \vskip-10pt
     \dessin{height=1.2 in}{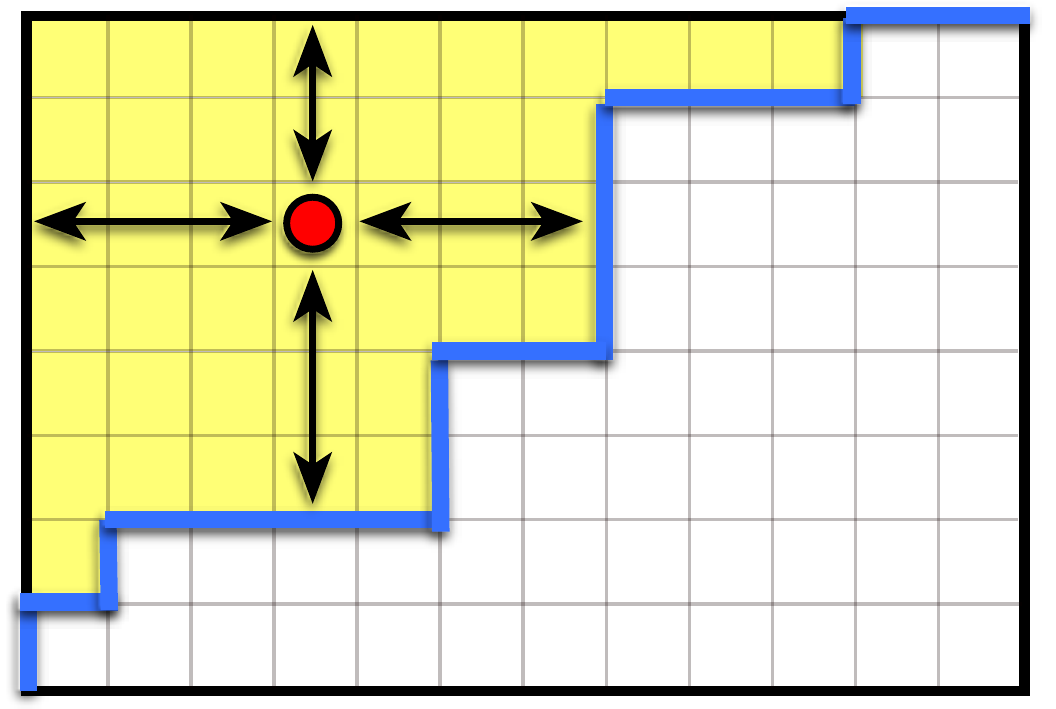}
     \begin{picture}(0,0)(-3,0)\setlength{\unitlength}{3mm}
\put(-2,7.8){$\scriptstyle\arm$}
\put(-7,9){$\scriptstyle\coarm$}
\put(-4.8,6.3){$\scriptstyle\leg$}
\put(-3,10){$\scriptstyle\coleg$}
\end{picture}
\end{wrapfigure}
Drawing the cells of  the Ferrers diagram of a partition $\mu$ as in \cite{Macdonald},  For a cell $c$  in $\mu$, (in symbols  $c\in\mu$), we have parameters $\leg(c)$, and $\arm(c)$, 
which respectively give the number of cells of $\mu$ strictly south of $c$, and strictly east  of $c$. 
Likewise  we have parameters $\coleg(c)$, and $\coarm(c)$, which respectively give the number of cells of $\mu$ strictly north of $c$, and strictly west  of $c$.
This is illustrated in the adjacent figure for the partition that sits above a path.

Denoting by $\mu'$ the conjugate of $\mu$, the basic ingredients we need to keep in mind here are
$$\begin{array}{lll}\displaystyle
\displaystyle n(\mu):=  \sum_{k=1}^{\ell(\mu)} (k-1) \mu_k, \qquad
& \displaystyle T_\mu:=  t^{n(\mu)}q^{n(\mu')},  
\qquad M:=(1-t)(1-q),\\[8pt]
\displaystyle B_\mu(q,t):=  \sum_{c \in \mu} t^{\coleg(c)} q^{\coarm(c)} ,
&\displaystyle \displaystyle\Pi_\mu(q,t):=\prod_{{c\in\mu\atop c\not=(0,0)}} (1-t^{\coleg(c)} q^{\coarm}),
\end{array}$$
and
$$w_\mu(q,t) :=  \prod_{c \in \mu} (q^{\arm(c)} - t^{\leg(c)+1})(t^{\leg(c)} - q^{\arm(c)+1})$$
Let us recall that the Hall scalar product  is defined by setting
\begin{displaymath}
\left\langle p_\lambda, p_\mu \right\rangle\  := \ 
z_\mu \, \chi(\lambda=\mu),
\end{displaymath}
where $z_\mu$ gives the order of the stabilizer of a permutation with cycle structure $\mu$.
The Macdonald polynomials we work with here are the unique (\cite{natBigraded}) symmetric function basis $\{\widetilde{H}_\mu[\X;q,t]\}_\mu$ which is upper-triangularly related (in dominance order) to the modified Schur basis $\{s_\lambda[\frac{\X}{t-1}] \}_\lambda$ and satisfies the orthogonality condition
\begin{equation} \label{3.5}
\left\langle \widetilde{H}_\lambda, \widetilde{H}_\mu \right\rangle_* =\ \charac(\lambda=\mu)\, w_\mu(q,t),
\end{equation}
where $\left\langle-,- \right\rangle_*$ denotes a deformation of the Hall scalar product defined by setting  
\begin{equation} \label{3.6}
\left\langle p_\lambda, p_\mu \right\rangle_*
:=  (-1)^{|\mu|-\ell(\mu)} \prod_i (1-t^{\mu_i})(1-q^{\mu_i})
\, z_\mu \, \charac(\lambda =\mu).
\end{equation}
 We  will use here the operator $\nabla$, introduced in \cite{SciFi}, obtained by setting 
\begin{equation} \label{3.7}
\nabla \widetilde{H}_\mu[\X;q,t]= T_\mu\, \widetilde{H}_\mu[\X;q,t].
\end{equation}
We also set, for any symmetric function $f[\X]$,
\begin{equation} \label{3.8}
\Delta_f \widetilde{H}_\mu[\X;q,t]= f[B_\mu]\, \widetilde{H}_\mu[\X;q,t].
\end{equation}
These families of operators were intensively studied in the $90's$ (see \cite{IdPosCon} and \cite{explicit}) where they gave rise to a variety of conjectures, some of which are still open to this date. In particular it is shown in \cite{explicit} that the operators $\D_k$, $\D_k^*$,  $\nabla$ and the modified Macdonald polynomials $\widetilde{H}_\mu[\X;q,t]$ are related by the following identities.
\begin{equation} \label{formulaoper}
\begin{array}{clcl}
{\rm (i)} & \D_0 \widetilde{H}_\mu = -D_\mu(q,t)\, \widetilde{H}_\mu,
\qquad\qquad &
{\rm (i)}^* & \D_0^* \widetilde{H}_\mu = -D_\mu(1/q,1/t) \widetilde{H}_\mu,
\\[5pt]
{\rm (ii)} & \D_k \ue_1 - \ue_1 \D_k = M \D_{k+1} ,&
{\rm (ii)}^* & \D_k^* \ue_1 - \ue_1 \D_k^* =  -\widetilde{M} \D_{k+1}^*, 
\\[5pt]
{\rm (iii)} & \nabla \ue_1 \nabla^{-1} = -\D_1,  &
{\rm (iii )}^* & \nabla \D_1^* \nabla^{-1} = \ue_1,
\\[5pt]
{\rm (iv)} & \nabla^{-1} \ue_1^\perp \nabla =  {\textstyle \frac{1}{M}} \D_{-1}, &
{\rm (iv )}^* & \nabla^{-1} \D_{-1}^* \nabla = -{\widetilde M}\, \ue_1^\perp,
\end{array}
\end{equation}
with $\ue_1^\perp$ denoting the Hall scalar product adjoint of multiplication by $e_1$, and 
\begin{equation} \label{formDmu}
D_\mu(q,t)= MB_\mu(q,t)-1.
\end{equation}

 We should mention that recursive applications of \pref{formulaoper} ${\rm (ii)}$ and ${\rm (ii)}^*$ give 
\begin{eqnarray} 
  \D_{k} &=&
\frac{1}{M^k} \sum_{i=0}^k {k \choose r}(-1)^r \ue_1^r \D_0 \ue_1^{k-r},\qquad{\rm and}\\
  \D_{k}^* &=&
\frac{1}{\widetilde{M}^k} \sum_{i=0}^k {k \choose r}(-1)^{k-r} \ue_1^r \D_0^* \ue_1^{k-r}.
\end{eqnarray}

For future use, it is convenient to set
\begin{eqnarray} 
 \Phi_k &:=& \nabla \D_k \nabla^{-1} 
\qquad {\rm and}\label{3.12a} \\
  \Psi_k &:=& -(qt)^{1-k} \nabla \D_k^* \nabla^{-1}.\label{3.12b}
\end{eqnarray}
The following identities are then immediate consequences of identities \pref{formulaoper}.  See \cite{newPleth} for details.
\begin{prop} \label{propphipsi}
The operators $\Phi_k$ and $\Psi_k$ are uniquely determined by the recursions
\begin{equation} \label{3.13}
{\rm a)} \quad  \Phi_{k+1} = \frac{1}{M}[ \D_1,\Phi_{k} ]
\qquad {\rm and} \qquad
{\rm b)} \quad  \Psi_{k+1} = \frac{1}{M}[\Psi_{k},\D_1]
\end{equation}
with initial conditions
\begin{equation} 
{\rm a)}\quad  \Phi_1 = \frac{1}{M} [\D_1,\D_0]
\qquad {\rm and} \qquad
{\rm b)}\quad  \Psi_1=-e_1.
\end{equation}
\end{prop}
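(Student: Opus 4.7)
The plan is to derive all four assertions by conjugating the commutator identities (ii) and (ii)$^*$ of \pref{formulaoper} by $\nabla$, and then rewriting the result in terms of $\D_1$ via (iii) and (iii)$^*$. Throughout, I will use the definitions $\Phi_k := \nabla \D_k \nabla^{-1}$ and $\Psi_k := -(qt)^{1-k}\, \nabla \D_k^* \nabla^{-1}$ from \pref{3.12a} and \pref{3.12b} directly.

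For the recursion in part (a), I would rewrite (ii) as $M\D_{k+1} = [\D_k, \ue_1]$ and apply $\nabla(-)\nabla^{-1}$ to both sides. Since conjugation is a Lie algebra homomorphism on operators, this yields $M\Phi_{k+1} = [\Phi_k,\, \nabla \ue_1 \nabla^{-1}]$, and by (iii) the second entry equals $-\D_1$, giving $M\Phi_{k+1} = [\Phi_k, -\D_1] = [\D_1, \Phi_k]$. For the initial value $\Phi_1$, the same manipulation applied to the $k=0$ instance $M\D_1 = [\D_0, \ue_1]$ produces $M\Phi_1 = [\nabla \D_0 \nabla^{-1},\, -\D_1]$. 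Identity (i) and \pref{3.7} show that $\D_0$ and $\nabla$ are simultaneously diagonal on the basis $\{\widetilde{H}_\mu\}$, hence $\nabla \D_0 \nabla^{-1} = \D_0$, which delivers $\Phi_1 = \frac{1}{M}[\D_1, \D_0]$ as claimed.

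Part (b) proceeds in parallel, starting from (ii)$^*$ rewritten as $-\widetilde{M}\,\D_{k+1}^* = [\D_k^*, \ue_1]$. Conjugation by $\nabla$ together with (iii) gives $-\widetilde{M}\,\nabla \D_{k+1}^* \nabla^{-1} = [\nabla \D_k^* \nabla^{-1},\, -\D_1]$. I would then substitute $\nabla \D_k^* \nabla^{-1} = -(qt)^{k-1}\Psi_k$ and the analogous expression for $\D_{k+1}^*$, and use the scalar identity $qt\,\widetilde{M} = (1-t)(1-q) = M$ to collapse the $(qt)$-prefactors. The outcome is exactly $\Psi_{k+1} = \frac{1}{M}[\Psi_k, \D_1]$. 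The initial condition $\Psi_1 = -e_1$ is then immediate from the $k=1$ case of the definition combined with (iii)$^*$, which states $\nabla \D_1^* \nabla^{-1} = \ue_1$.

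The only delicate point will be the bookkeeping of the $(qt)^{1-k}$ prefactors in the $\Psi$ computation; the fact that the recursion takes the clean form $\frac{1}{M}[\Psi_k,\D_1]$ hinges entirely on the scalar identity $qt\,\widetilde{M} = M$. Uniqueness is automatic, since the recursions together with the initial conditions determine $\Phi_k$ and $\Psi_k$ explicitly for every $k \geq 1$.
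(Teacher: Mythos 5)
Your proposal is correct and follows exactly the route the paper intends: the paper states the proposition as an immediate consequence of the identities in \pref{formulaoper} (deferring details to \cite{newPleth}), and your argument supplies precisely those details — conjugating (ii) and (ii)$^*$ by $\nabla$, invoking (iii), (iii)$^*$, and the diagonality of $\D_0$, and using $qt\,\widetilde{M}=M$ to normalize the $\Psi_k$ prefactors. All the computations check out, including the sign and scalar bookkeeping.
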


Next, we must include the following fundamental identity, proved in \cite{newPleth}.
\begin{prop} \label{propDD}
For $a,b \in \mathbb{Z}$ with $n=a+b>0$ and any symmetric function $f[\X]$, we have
\begin{equation} \label{3.15}
\frac{1}{M} ( \D_a \D_b^* - \D_b^* \D_a) f[\X] = \frac{(qt)^b}{qt-1} h_{n} \!\left[\frac{1-qt}{qt}\,\X\right] f[\X].
\end{equation}
\end{prop}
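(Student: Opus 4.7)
The plan is to compute the commutator by encoding the operators as generating series and comparing two formal expansions of the same rational function. Set $\mathbf{D}(z) := \sum_k \D_k\,z^k$ and $\mathbf{D}^*(z) := \sum_k \D_k^*\,z^k$, so that by \pref{defD} and \pref{defDetoile},
\[
\mathbf{D}(z)\, g[\X] = g[\X + M/z]\, \Omega[-z\X],
\qquad
\mathbf{D}^*(z)\, g[\X] = g[\X - \widetilde{M}/z]\, \Omega[z\X].
\]
Applying these in each order and using $\Omega[A+B]=\Omega[A]\Omega[B]$ together with the key identity $\widetilde{M}=M/(qt)$, one checks that both products $\mathbf{D}(z_1)\mathbf{D}^*(z_2)f$ and $\mathbf{D}^*(z_2)\mathbf{D}(z_1)f$ evaluate to the same closed form
\[
G(z_1,z_2)\cdot R(z_2/z_1),\qquad G(z_1,z_2) := f\!\left[\X + \tfrac{M}{z_1} - \tfrac{\widetilde M}{z_2}\right]\Omega[(z_2-z_1)\X],
\]
with $R(u) := \Omega[uM] = \frac{(1-tu)(1-qu)}{(1-u)(1-qtu)}$. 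The two orderings, however, force $R(z_2/z_1)$ to be expanded in \emph{different directions}: as $\Omega[z_2 M/z_1]$ in positive powers of $u := z_2/z_1$ for $\mathbf{D}(z_1)\mathbf{D}^*(z_2)$, and (via $\widetilde M = M/(qt)$) as $\Omega[M/(qtu)]$ in positive powers of $1/u$ for $\mathbf{D}^*(z_2)\mathbf{D}(z_1)$.

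The heart of the proof is a partial fraction analysis of the resulting discrepancy. Writing
\[
R(u) = 1 + \frac{M}{1-qt}\left(\frac{1}{1-u} - \frac{1}{1-qtu}\right),
\]
and using that the difference of the two expansions of $\frac{1}{1-u/c}$ is the formal delta series $\delta_c(u) := \sum_{k\in\mathbb Z} c^{-k}u^k$ supported at $u=c$, one obtains
\[
\bigl(\mathbf{D}(z_1)\mathbf{D}^*(z_2) - \mathbf{D}^*(z_2)\mathbf{D}(z_1)\bigr)f \;=\; G(z_1,z_2)\cdot \frac{M}{1-qt}\bigl(\delta_1(u) - \delta_{1/(qt)}(u)\bigr).
\]

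The final step is coefficient extraction via the collapsing identity $g(u)\delta_c(u) = g(c)\delta_c(u)$. Switching to variables $z := z_1$ and $u := z_2/z_1$, so that $[z_1^a z_2^b] = [z^n u^b]$, collapsing at $u=1$ produces $G(z,z) = f[\X + (M-\widetilde{M})/z]$, which is a Laurent series with only non-positive powers of $z$; since $n>0$, it contributes nothing to $[z^n]$. Collapsing at $u=1/(qt)$ exploits the striking cancellation $M/z_1 - \widetilde M/z_2\equiv 0$ along $z_2 = z_1/(qt)$ (a consequence of $qt\,\widetilde M = M$), producing
\[
G(z,z/(qt)) = f[\X]\cdot \Omega\!\left[\tfrac{(1-qt)z}{qt}\X\right],
\]
whose $[z^n]$-coefficient equals $h_n\!\left[\tfrac{1-qt}{qt}\X\right] f[\X]$. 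Combined with $[u^b]\delta_{1/(qt)}(u) = (qt)^b$, this yields $\frac{M(qt)^b}{qt-1}\,h_n\!\left[\tfrac{1-qt}{qt}\X\right] f[\X]$ for the commutator, and dividing by $M$ gives the claimed identity.

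The main delicacy is in the formal-delta manipulations: the rational function $G(z_1,z_2)\,R(z_2/z_1)$ is not a single formal Laurent series in $z_1,z_2$, and the two orderings correspond to opposite expansion regions for $R$. The collapsing identity must be justified by direct coefficient analysis, together with a check that the evaluations $G(z,z)$ and $G(z,z/(qt))$ are themselves well-defined formal series in $z$. The hypothesis $n>0$ is essential: it is precisely what kills the $u=1$ contribution, leaving the asymmetric factor $(qt)^b$ from $u=1/(qt)$ that gives the statement its characteristic shape.
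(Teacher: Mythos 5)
Your argument is correct. Note that the paper itself gives no proof of Proposition \ref{propDD} --- it simply cites the companion paper \cite{newPleth} --- so there is nothing in-text to compare against; but your route (compose the two plethystic substitutions, observe that both orderings produce the same prefactor $G(z_1,z_2)$ times the two opposite Laurent expansions of $\Omega[Mz_2/z_1]=\tfrac{(1-tu)(1-qu)}{(1-u)(1-qtu)}$, then extract the delta-function contributions at $u=1$ and $u=1/(qt)$) is the standard vertex-operator-style computation and is essentially the one carried out in that reference. All the checkable details are right: the partial-fraction constants are $\pm M/(1-qt)$, the $u=1$ collapse gives $f[\X+(M-\widetilde M)/z]$ which has no positive powers of $z$ and so dies for $n>0$, the $u=1/(qt)$ collapse kills the plethystic shift because $qt\,\widetilde M=M$, and $[u^b]\delta_{1/(qt)}(u)=(qt)^b$ supplies the asymmetric factor. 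The one point you flag but do not fully write out --- that $G(z,zu)$ has, for each fixed power of $z$, only a Laurent \emph{polynomial} in $u$ as coefficient, which is what licenses $G(z,zu)\delta_c(u)=G(z,zc)\delta_c(u)$ --- does hold (the $f$-factor is a polynomial in $1/z_1,1/z_2$ and $\Omega[(z_2-z_1)\X]$ is a power series in $z_1,z_2$), so the sketch completes to a full proof.
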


As a corollary we obtain the following.
\begin{prop} \label{propcrochetphipsi}
The operators $\Phi_k$ and $\Psi_k$, defined in \pref{3.12a} and \pref{3.12b}, satisfy the following identity when $a,b$ are any positive integers with sum equal to $n$.
\begin{equation} \label{3.16}
\frac{1}{M} [\Psi_b, \Phi_a] = \frac{qt}{qt-1} \nabla \uh_n \!\left[\frac{1-qt}{qt}\,\X\right] \nabla^{-1}.
\end{equation}
\end{prop}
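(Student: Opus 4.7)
The plan is to derive \pref{3.16} as a direct corollary of Proposition~\ref{propDD} by conjugating the commutator identity there with $\nabla$. The key observation is that conjugation by an invertible operator preserves Lie brackets, so the commutator $[\Psi_b,\Phi_a]$ reduces, up to a scalar, to $\nabla[\D_a,\D_b^*]\nabla^{-1}$, which Proposition~\ref{propDD} has already evaluated explicitly as a multiplication operator.

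Concretely, I would substitute $\Phi_a = \nabla \D_a \nabla^{-1}$ and $\Psi_b = -(qt)^{1-b}\, \nabla \D_b^* \nabla^{-1}$ into $[\Psi_b,\Phi_a]$. The inner factors of $\nabla^{-1}\nabla$ telescope, giving
\[
[\Psi_b, \Phi_a] = -(qt)^{1-b}\, \nabla\, \bigl(\D_b^* \D_a - \D_a \D_b^*\bigr)\, \nabla^{-1} = (qt)^{1-b}\, \nabla\, [\D_a, \D_b^*]\, \nabla^{-1}.
\]
Dividing by $M$ and invoking Proposition~\ref{propDD}, which asserts that $\frac{1}{M}[\D_a,\D_b^*]$ equals multiplication by $\frac{(qt)^b}{qt-1}\, h_n\!\bigl[\tfrac{1-qt}{qt}\X\bigr]$, I would obtain
\[
\frac{1}{M}[\Psi_b, \Phi_a] = (qt)^{1-b}\cdot \frac{(qt)^b}{qt-1}\, \nabla\, \uh_n\!\left[\tfrac{1-qt}{qt}\X\right]\, \nabla^{-1} = \frac{qt}{qt-1}\, \nabla\, \uh_n\!\left[\tfrac{1-qt}{qt}\X\right]\, \nabla^{-1},
\]
which is precisely \pref{3.16}.

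There is essentially no obstacle here; the statement is a short algebraic manipulation that rests entirely on the already-established Proposition~\ref{propDD}. The only places where care is required are the sign coming from the $-(qt)^{1-b}$ prefactor in the definition of $\Psi_b$, which flips the bracket $[\D_b^*,\D_a]$ into $[\D_a,\D_b^*]$ and converts the minus sign into a plus, together with the fact that $(qt)^{1-b}\cdot (qt)^b = qt$ independently of $b$. This cancellation of the $b$-dependence is precisely what makes the right-hand side depend only on the sum $n=a+b$, matching the symmetry of Proposition~\ref{propDD}.
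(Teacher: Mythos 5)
Your proposal is correct and is essentially identical to the paper's own proof: both conjugate the commutator identity of Proposition~\ref{propDD} by $\nabla$, substitute the definitions \pref{3.12a}--\pref{3.12b}, and observe that the prefactor $-(qt)^{1-b}$ combines with $(qt)^b/(qt-1)$ to give $qt/(qt-1)$. The sign and scalar bookkeeping in your argument all check out.
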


\begin{proof}[\bf Proof]
The identity in \pref{3.15} essentially says that under the given hypotheses the operator $\frac{1}{M}(\D_b^* \D_a - \D_a \D_b^*)$ acts as multiplication by the symmetric function $\frac{(qt)^b}{qt-1} h_{n} \!\left[(1-qt)\X/(qt)\right]$. Thus, with our notational conventions, \pref{3.15} may be rewritten as
\begin{displaymath}
- \frac{(qt)^{1-b}}{M} \left( \D_b^* \D_a - \D_a \D_b^* \right) = \frac{qt}{qt-1}\ \uh_n\left[\frac{1-qt}{qt}\,\X\right].
\end{displaymath}
Conjugating both sides by $\nabla$, and using 
\pref{3.12a} and \pref{3.12b}, gives \pref{3.16}.
\end{proof}

In the sequel, we will need to keep in mind the following identity which expresses the action of a sequence of $\D_k$ operators on a symmetric function $f[\X]$.
\begin{prop} \label{prop3.2} For all composition $\alpha=(a_1,a_2,\ldots,a_m)$ we have
\begin{equation}
\D_{a_m} \cdots \D_{a_2} \D_{a_1} f[\X] 
= f\!\left[\X+{\textstyle{\sum_{i=1}^m {M}/{z_i}}}\right] \, 
\frac{\Omega[-\mathbf{z}X] }{\mathbf{z}^\alpha} \,
\prod_{1\leq i<j \leq m} \Omega \left[-M \textstyle{z_i/ z_j} \right] \Big|_{\mathbf{z}^0}, 
\end{equation}
where, for $\mathbf{z}=z_1+\ldots +z_m$,  we write $\mathbf{z}^\alpha=z_1^{a_1}\cdots z_m^{a_m}$, and in particular $\mathbf{z}^0=z_1^0z_2^0\cdots z_m^0$.
\end{prop}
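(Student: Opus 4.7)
The plan is to proceed by induction on the length $m$ of the composition, starting from the single-operator definition $\D_k f[\X] = f[\X + M/z]\, \Omega[-z\X]\big|_{z^k}$, where I use the identity $\sum_{i\geq 0} (-z)^i e_i[\X] = \Omega[-z\X]$ that one obtains by taking logarithms of $\prod_i(1-zx_i)$. The base case $m=1$ is then precisely the stated formula, with a single variable $z_1$ and an empty product of cross factors $\Omega[-M z_i/z_j]$.

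For the inductive step I would apply $\D_{a_{m+1}}$ to the Proposition~3.2 formula for compositions of length $m$. By the defining equation of $\D_{a_{m+1}}$, this amounts to performing the plethystic substitution $\X \mapsto \X + M/z_{m+1}$ on the entire right-hand side, multiplying by $\Omega[-z_{m+1}\X]$, and then extracting the coefficient of $z_{m+1}^{a_{m+1}}$. The three groups of factors transform as follows: $f\!\left[\X + \sum_{i=1}^m M/z_i\right]$ picks up the extra summand $M/z_{m+1}$; the factor $\Omega[-(z_1+\cdots+z_m)\X]$ becomes $\Omega[-(z_1+\cdots+z_m)\X] \cdot \prod_{i=1}^{m} \Omega[-M z_i/z_{m+1}]$ by the additivity $\Omega[A+B] = \Omega[A]\Omega[B]$; and the factors $\Omega[-M z_i/z_j]$ with $i<j\leq m$ contain no $\X$ and are unchanged. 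The new multiplier $\Omega[-z_{m+1}\X]$ merges with the $\Omega[-(z_1+\cdots+z_m)\X]$ piece to give $\Omega[-\mathbf{z}\X]$ with $\mathbf{z}=z_1+\cdots+z_{m+1}$, and the newly produced $\Omega[-M z_i/z_{m+1}]$ factors precisely fill in the missing terms of $\prod_{1\leq i<j\leq m+1} \Omega[-M z_i/z_j]$.

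The one subtlety that requires justification is that the coefficient extraction $\big|_{z_1^{a_1}\cdots z_m^{a_m}}$ from the inductive hypothesis and the coefficient extraction $\big|_{z_{m+1}^{a_{m+1}}}$ performed by the outer operator can be combined into a single extraction $\big|_{\mathbf{z}^\alpha}$ at the end. This is legitimate because the operations carried out by $\D_{a_{m+1}}$, namely the plethystic substitution in $\X$ and the multiplication by $\Omega[-z_{m+1}\X]$, act as coefficient-wise linear maps on the ring of formal Laurent series in $z_1,\ldots,z_m$, and hence commute with extraction of coefficients in those variables. Equivalently, one may view the whole expression as a formal series in the independent variables $z_1,\ldots,z_{m+1}$ and perform the coefficient extractions in any order.

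I do not anticipate a deep obstacle here: once the base case is checked, the induction is essentially a bookkeeping exercise using the additivity of $\Omega$ and the fact that plethystic substitution commutes with multiplication. The main point of care is to route the substitution $\X \mapsto \X + M/z_{m+1}$ through the accumulated factors so that the new cross terms $\Omega[-M z_i/z_{m+1}]$ emerge from the $\Omega[-(z_1+\cdots+z_m)\X]$ piece and nowhere else, which is what makes the product $\prod_{1\leq i<j\leq m+1}$ assemble correctly.
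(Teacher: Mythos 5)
Your proof is correct and follows essentially the same route as the paper, which carries out the $m=2$ computation explicitly (showing how $\Omega[-z_1(\X+M/z_2)]$ splits off the cross factor $\Omega[-Mz_1/z_2]$) and then observes that "the pattern of the general result clearly emerges." Your version simply formalizes that pattern as an induction on $m$ and, usefully, makes explicit the commutation of the outer operator with the coefficient extractions in $z_1,\dots,z_m$, which the paper leaves implicit.
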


\begin{proof}[\bf Proof]
It suffices to see what happens when we use \pref{defD} twice.
\begin{eqnarray*}
\D_{a_2} \D_{a_1}f[\X]
&=& \D_{a_2}f\!\left[\X+{\tfrac{M}{z_1}}\right] \Omega[-z_1\X] \Big|_{z_1^{a_1}}\\
&=& f\!\left[\X+{\textstyle \frac{M}{z_1}}+{\tfrac{M}{z_2}}\right]\,\Omega[-z_1(\X+{\tfrac{M}{z_2}})]\Omega[-z_2\X]\Big|_{z_1^{a_1}z_2^{a_2}} \\ 
&=& f\!\left[\X+{\tfrac{M}{z_1}}+{\tfrac{M}{z_2}}\right]\,\Omega[-z_1\X]\,\Omega[-z_2\X]\, \Omega[-Mz_1/ z_2]
\Big|_{z_1^{a_1}z_2^{a_2}}\\
&=& f\!\left[\X+{\tfrac{M}{z_1}}+{\tfrac{M}{z_2}}\right]\,\frac{\Omega[-(z_1+z_2)\X]}{z_1^{a_1}z_2^{a_2}}\, \Omega[-Mz_1/ z_2]
\Big|_{z_1^0z_2^0},
\end{eqnarray*}
and  the pattern of the general  result clearly emerges.
\end{proof}

%%%%%%%%%%%%%%%%%%%%%%%%%%%%%%%%%%

\section{The \texorpdfstring{$\SL_2[\mathbb{Z}]$}--action and the \texorpdfstring{$ \Qop$}- operators indexed by pairs \texorpdfstring{$(km,kn)$}.}

To extend the definition of the $\Qop$ operators to any non-coprime pairs of indices we need to make use of the action of $\SL_2[ \mathbb{Z} ]$ on the operators $\Qop_{m,n}$. In \cite{newPleth}, $\SL_2[\mathbb{Z}]$ is shown to act on the algebra generated by the $\D_k$ operators by setting, for its generators 
\begin{equation} \label{4.1}
N:=\begin{bmatrix} 1 & 1 \\ 0 & 1 \end{bmatrix}
\qquad {\rm and} \qquad
S:=\begin{bmatrix} 1 & 0 \\ 1 & 1 \end{bmatrix},
\end{equation}

\begin{equation} \label{4.2}
N( \D_{k_1} \D_{k_2} \cdots \D_{k_r} )
= \nabla ( \D_{k_1} \D_{k_2} \cdots \D_{k_r} ) \nabla ^{-1},
\end{equation}
and
\begin{equation} \label{4.3}
S( \D_{k_1} \D_{k_2} \cdots \D_{k_r})
= \D_{k_1+1} \D_{k_2+1} \cdots \D_{k_r+1}.
\end{equation}
It is easily seen that \pref{4.2} is a well-defined action since any polynomial in the $\D_k$ that acts by zero on symmetric functions has an image under $N$  which also acts by zero. In \cite{newPleth}, the same property is shown to hold true for the action of $S$ as defined by \pref{4.3}.

Since $\D_k = \Qop_{1,k}$, and thus $S \Qop_{1,k}= \Qop_{1,k+1}$, it recursively follows that
\begin{equation} \label{4.4}
S\Qop_{m,n}= \Qop_{m,n+m}.
\end{equation}
On the other hand, it turns out that the property
\begin{equation} \label{4.5}
N\Qop_{m,n}=\Qop_{m+n,n},
\end{equation}
is a consequence of the following general result proved in \cite{newPleth}

\begin{prop} \label{prop4.1}
For any coprime pair $m,n$ we have
\begin{equation} \label{4.6}
\Qop_{m+n,n} = \nabla \Qop_{m,n} \nabla^{-1}.
\end{equation}
It then follows from \pref{4.4} and \pref{4.5} that for any $\Big[\begin{matrix} a & c \\[-2pt] b & d \end{matrix}\Big] \in \SL_2[\mathbb{Z}]$, we have
\begin{equation} \label{4.7}
\begin{bmatrix} a & c \\ b & d \end{bmatrix} \Qop_{m,n} = \Qop_{am+cn,bn+dn}.
\end{equation}
\end{prop}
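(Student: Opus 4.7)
The plan is to first establish the key identity (4.6), namely $\Qop_{m+n,n} = \nabla \Qop_{m,n} \nabla^{-1}$ for coprime $(m,n)$, by induction on $m$, and then to derive the general $\SL_2(\mathbb{Z})$-equivariance (4.7) from (4.4) and (4.6) by reducing to the generators.

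For the base case $m=1$, I would prove $\Qop_{n+1,n} = \Phi_n$ by a secondary induction on $n$. When $n=1$, the special convention $\spl(2,1) = (1,0)+(1,1)$ from (3.20b) combined with Definition \ref{defnQ} gives $\Qop_{2,1} = \tfrac{1}{M}[\D_1, \D_0] = \Phi_1$, which is exactly the initial condition in Proposition \ref{propphipsi}. For $n \geq 2$, one checks from (3.18) that the unique solution to $(n+1)b + 1 = na$ with the required bounds is $a=n,\ b=n-1$, so $\spl(n+1,n) = (n,n-1)+(1,1)$ and hence $\Qop_{n+1,n} = \tfrac{1}{M}[\D_1, \Qop_{n,n-1}]$. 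The secondary inductive hypothesis gives $\Qop_{n,n-1} = \Phi_{n-1}$, and then the recursion (3.13a) yields $\Qop_{n+1,n} = \Phi_n = \nabla \D_n \nabla^{-1}$.

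For the inductive step $m \geq 2$, I would first verify the compatibility
\[
\spl(m+n,n) = (a+b,\,b) + (c+d,\,d),\qquad \text{where}\qquad \spl(m,n) = (a,b) + (c,d).
\]
This is a direct check of the characterization (3.18): the candidate $a' = a+b,\ b' = b$ satisfies $(m+n)b + 1 = (mb+1) + nb = na + nb = n(a+b)$, and the bounds $1 \leq a+b \leq m+n-1$ and $1 \leq b \leq n-1$ follow immediately from $1 \leq a \leq m-1,\ 1 \leq b \leq n-1$; uniqueness then pins down $\spl(m+n,n)$. Since $a, c < m$ and the pairs $(a,b),(c,d)$ are coprime by Proposition \ref{prop3.3}, the inductive hypothesis applied to each gives $\Qop_{a+b,b} = \nabla \Qop_{a,b} \nabla^{-1}$ and $\Qop_{c+d,d} = \nabla \Qop_{c,d} \nabla^{-1}$. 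Since $\nabla$-conjugation is a Lie algebra map, it commutes with commutators, yielding
\[
\Qop_{m+n,n} \,=\, \tfrac{1}{M}[\Qop_{c+d,d}, \Qop_{a+b,b}] \,=\, \nabla\bigl(\tfrac{1}{M}[\Qop_{c,d}, \Qop_{a,b}]\bigr)\nabla^{-1} \,=\, \nabla \Qop_{m,n} \nabla^{-1}.
\]

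For (4.7), I would use that $N$ and $S$ generate $\SL_2(\mathbb{Z})$ and argue by induction on the length of any word representing the given matrix: the two generator identities $N \Qop_{m,n} = \Qop_{m+n,n}$ (which is (4.6) combined with the definition (4.2) of $N$) and $S \Qop_{m,n} = \Qop_{m, m+n}$ (obtained by inducting on the recursive definition of $\Qop_{m,n}$ and using $S\D_k = \D_{k+1}$ together with the fact that $S$ preserves commutators since it acts multiplicatively on products of $\D_k$'s) handle the atomic steps, and the composition of two such actions matches the matrix product acting on $(m,n)^T$. The main obstacle is the compatibility verification $\spl(m+n,n) = (a+b, b) + (c+d, d)$, which underpins the induction in (4.6); once this is in hand, the remaining steps are a straightforward orchestration of the $\Phi_k$ recursion and the recursive structure of $\Qop$.
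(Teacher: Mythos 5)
The paper does not actually prove Proposition \ref{prop4.1} in this document: identity \pref{4.6} is explicitly attributed to the companion paper \cite{newPleth}, so there is no in-paper argument to compare against. Your self-contained double induction is sound, and its crux --- the compatibility $\spl(m+n,n)=(a+b,b)+(c+d,d)$ verified through the characterization \pref{3.18} --- is exactly the right lemma: it shows that the recursive commutator defining $\Qop_{m+n,n}$ is the $\nabla$-conjugate of the one defining $\Qop_{m,n}$, term by term. Your base case also correctly runs the paper's Proposition \ref{prop4.2} in reverse (deriving $\Qop_{n+1,n}=\Phi_n$ from the recursion of Proposition \ref{propphipsi}, which is established independently from \pref{formulaoper}), so there is no circularity.

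Two loose ends are worth tightening. First, your bound-checking via \pref{3.18} silently assumes $m,n>1$; when $n=1$ the split is governed by the convention \pref{3.20}(b), and the induction then invokes the hypothesis for the pair $(1,0)$, i.e.\ $\nabla\Qop_{1,0}\nabla^{-1}=\Qop_{1,0}$. This is true because $\D_0$ and $\nabla$ are simultaneously diagonal on the $\widetilde{H}_\mu$ basis (by \pref{formulaoper}(i) and \pref{3.7}), but it falls outside the statement you are inducting on (coprime pairs of \emph{positive} integers) and should be recorded as a separate degenerate case. Second, for \pref{4.7}: $N$ and $S$ generate $\SL_2[\mathbb{Z}]$ only as a group, so an induction on word length must either allow the inverse generators (whose images $\Qop_{m-n,n}$, $\Qop_{m,n-m}$ may leave the range where $\Qop$ is defined) or be restricted to the monoid of positive words, which is all that is needed for the pairs $(km,kn)$ actually used; you should also note that well-definedness under different words for the same matrix is exactly the point the paper defers to \cite{newPleth} after \pref{4.3}. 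With those caveats, your argument stands.
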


The following identity has a variety of consequences 
in the present development.
\begin{prop} \label{prop4.2}
For any $k \geq 1$ we have $\Qop_{k+1,k}= \Phi_k$ and $\Qop_{k-1,k} = \Psi_k$. In particular, for all pairs $a,b$, of positive integers with sum equal to $n$, it follows that
\begin{equation} \label{4.8}
\frac{1}{M}[\Qop_{b+1,b}, \Qop_{a-1,a}] = 
\frac{qt}{qt-1} \nabla \uh_n \!\left[\frac{1-qt}{qt}\,\X\right] \nabla^{-1}.
\end{equation}
\end{prop}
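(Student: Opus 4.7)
The plan is to establish the two identifications $\Qop_{k+1,k} = \Phi_k$ and $\Qop_{k-1,k} = \Psi_k$ separately, each by induction on $k$ using Definition~\ref{defnQ}; the commutator identity \pref{4.8} will then follow by a single substitution into Proposition~\ref{propcrochetphipsi}.

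For the first identification, I would begin with the base case $k=1$: Proposition~\ref{prop4.1} applied with $m=n=1$ gives
\begin{displaymath}
\Qop_{2,1} = \nabla \Qop_{1,1}\nabla^{-1} = \nabla \D_1 \nabla^{-1} = \Phi_1,
\end{displaymath}
the last equality being the very definition \pref{3.12a}. For the inductive step, I would use the Bezout identity $(k+1)(k-1)+1 = k\cdot k$ to conclude from Proposition~\ref{prop3.3} that $\spl(k+1,k)=(k,k-1)+(1,1)$. Definition~\ref{defnQ} then yields
\begin{displaymath}
\Qop_{k+1,k} = \tfrac{1}{M}[\Qop_{1,1},\Qop_{k,k-1}] = \tfrac{1}{M}[\D_1,\Phi_{k-1}] = \Phi_k,
\end{displaymath}
the last equality being exactly the recursion \pref{3.13}a).

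For the second identification, the base case $k=2$ can be checked directly: $\Qop_{1,2}=\D_2$ by \pref{1.4}, while $\Psi_2 = \tfrac{1}{M}[\Psi_1,\D_1] = \tfrac{1}{M}[-\ue_1,\D_1] = \tfrac{1}{M}(\D_1\ue_1 - \ue_1\D_1) = \D_2$, using identity \pref{formulaoper}(ii). For $k\geq 3$, the Bezout equation $(k-1)\cdot 1 + 1 = k\cdot 1$ gives $\spl(k-1,k)=(1,1)+(k-2,k-1)$, so Definition~\ref{defnQ} produces
\begin{displaymath}
\Qop_{k-1,k} = \tfrac{1}{M}[\Qop_{k-2,k-1},\Qop_{1,1}] = \tfrac{1}{M}[\Psi_{k-1},\D_1] = \Psi_k,
\end{displaymath}
again matching the recursion \pref{3.13}b).

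With both identifications established, the bracket $[\Qop_{b+1,b},\Qop_{a-1,a}]$ becomes $[\Phi_b,\Psi_a]$, and Proposition~\ref{propcrochetphipsi}, after relabeling its indices so that $a+b=n$ and using the antisymmetry of the commutator to absorb any reordering, immediately delivers \pref{4.8}. The only genuinely new work lies in verifying the two $\spl$ decompositions from the arithmetic characterization of Proposition~\ref{prop3.3}; everything else reduces to the recursions \pref{3.13} or to the previously established Propositions~\ref{prop4.1} and~\ref{propcrochetphipsi}. Accordingly, I do not expect a serious obstacle here, although some care will be needed to align the ordering conventions of the commutator on the two sides of the identification.
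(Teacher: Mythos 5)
Your argument is correct and, for the identification $\Qop_{k-1,k}=\Psi_k$ and the deduction of \pref{4.8}, it coincides with the paper's: the paper likewise verifies that the operators $\Qop_{k-1,k}$ satisfy the recursion (\ref{3.13}b) via the split $\spl(k,k+1)=(1,1)+(k-1,k)$ and then invokes the uniqueness statement of Proposition~\ref{propphipsi}, which is your induction in different clothing. Where you genuinely diverge is the first identification: the paper obtains $\Qop_{k+1,k}=\Phi_k$ in one line, as $\Qop_{k+1,k}=\nabla\Qop_{1,k}\nabla^{-1}=\nabla\D_k\nabla^{-1}=\Phi_k$, by Proposition~\ref{prop4.1} and the definition \pref{3.12a}, with no induction at all. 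Your route through $\spl(k+1,k)=(k,k-1)+(1,1)$ and the recursion (\ref{3.13}a) is valid (your check of condition (3) of Proposition~\ref{prop3.3} is right), but it costs an extra induction and still uses Proposition~\ref{prop4.1} for the base case; the conjugation argument is shorter and is the reason (\ref{3.13}a) holds in the first place.

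Two caveats. First, your induction for $\Psi$ starts at $k=2$ and so does not cover the $k=1$ instance $\Qop_{0,1}=\Psi_1$; this is a matter of convention, since Definition~\ref{defnQ} does not define $\Qop_{0,1}$ — the paper settles it by taking $\Qop_{0,1}=-\ue_1$, consistent with $\Qop_{0,k}$ being multiplication by $\frac{qt}{qt-1}h_k\!\left[\frac{1-qt}{qt}\X\right]$. Second, and more substantively, antisymmetry does not \emph{absorb} the reordering: $[\Phi_b,\Psi_a]=-[\Psi_a,\Phi_b]$, so a literal application of \pref{3.16} yields \pref{4.8} with an extra minus sign. This sign tension is already present in the source (compare the bracket orders in \pref{3.16}, in \pref{4.8}, and in Theorem~\ref{thmQind}), so the likely culprit is a misprint in \pref{4.8}; but you should state explicitly which ordering you take to be correct rather than gesturing at antisymmetry, since as written your final step establishes the negative of the displayed identity.
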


\begin{proof}[\bf Proof]
In view of  \pref{3.12a} and the second case of \pref{eqdefnQ}, the first equality is a special instance of \pref{4.6}. To prove the second equality, by Proposition \ref{propphipsi}), we only need  to show that the operators $\Qop_{k-1,k}$ satisfy the same recursions and  base cases as the $\Psi_k$ operators. To begin, note that since $\spl(k,k+1)=(1,1)+(k-1,k)$ it follows that
\begin{equation}\label{4.10}
\Qop_{k,k+1} = \frac{1}{M}\left[\Qop_{k-1,k},\Qop_{1,1} \right]
= \frac{1}{M}\left[\Qop_{k-1,k}, \D_1 \right],
\end{equation}
which is (\ref{3.13}b) for $\Qop_{k,k+1}$. However the base case is trivial since by definition $\Qop_{0,1}= -\ue_1$. 
The identity in \pref{4.10} is another way of stating \pref{3.16}.
\end{proof}

\begin{figure}[ht]
\begin{center}
\dessin{height=1.1 in}{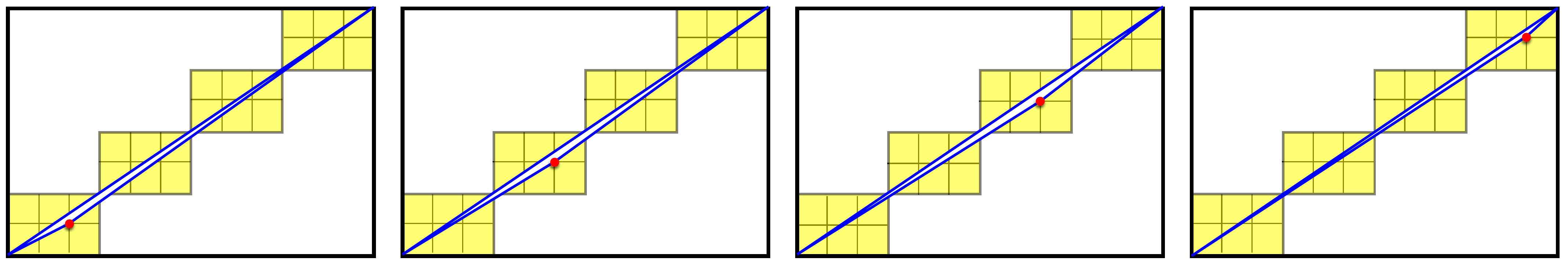}  
\caption{The four splits of (12,8).}
\label{fig:quadrop}
\end{center}
\end{figure}
Our first application is best illustrated by an example.  In Figure \ref{fig:quadrop} we have depicted $k$ versions of the $km\times kn$ rectangle for the case $k=4$ and $(m,n)=(3,2)$. These illustrate that there are $4$ ways to split the vector $(0,0)\to (4\times 3,4\times 2)$ by choosing a closest lattice point below the diagonal. Namely,
\begin{displaymath}
(12,8) = (2,1)+(10,7) = (5,3)+(7,5) = (8,5)+(4,3) = (11,7)+(1,1).
\end{displaymath}
Now, it turns out that the corresponding four bracketings  
\begin{displaymath}
[\Qop_{10,7},\Qop_{2,1}], \qquad 
[\Qop_{7,5},\Qop_{5,3}], \qquad 
[\Qop_{4,3},\Qop_{8,5}], \qquad  {\rm and} \qquad
[\Qop_{1,1},\Qop_{11,7}],
\end{displaymath}
give the same symmetric function operator. This is one of the  consequences of the identity in \pref{4.8}. In fact, the reader should not have any difficulty checking that these four bracketings are the images of the bracketings in \pref{4.8} for $n=4$ by $\big[\begin{matrix}\scriptstyle  2 & \scriptstyle1\\[-5pt] \scriptstyle1 &\scriptstyle 1 \end{matrix}\big]$. Therefore they must also give the same symmetric function operator since our action of $\SL_2[\mathbb{Z}]$ preserves all the identities satisfied by the $\D_k$ operators.

In the general case if $\spl(m,n)=(a,b)+(c,d)$, the $k$ ways are given by
\begin{displaymath}
\left( (u-1)m+a,(u-1)n+b \right) + \left( (k-u)m+c,(k-u)n+d \right),
%\qquad (\hbox{for }1\leq u\leq k).
\end{displaymath}
with $u$ going from $1$ to $k$.

The definition of the $\Qop$ operators in the non-coprime case, as well as some of their remarkable properties, appear in the following result proved in \cite{newPleth}.
\begin{thm} \label{thmQind}
If $\spl(m,n)=(a,b)+(c,d)$ then we may set, for $k> 1$ and any $1\leq u\leq k$,
\begin{displaymath} 
\Qop_{km,kn}= \frac{1}{M}
\left[\Qop_{ (k-u)m+c,(k-u)n+d}, 
\Qop_{(u-1)m+a,(u-1)n+b} \right].
\end{displaymath}
Moreover, letting 
$\A := \Big[\begin{matrix} a & c \\[-2pt] b & d\end{matrix}\Big]$ 
we also have\footnote{Notice $\A \in \SL_2[\mathbb{Z}]$ since (3) of \pref{3.13} gives $ad-bc=1$.}
\begin{displaymath}  
{\rm a)} \quad \Qop_{k,k}= \frac{qt}{qt-1}
\nabla \uh_k \!\left[\frac{1-qt}{qt}\,\X\right] \nabla^{-1},
\qquad {\rm and} \qquad
{\rm b)} \quad \Qop_{km,kn}= \A \Qop_{k,k}.
\end{displaymath}
In particular, it follows that for any fixed $(m,n)$ the operators $\left\{ \Qop_{km,kn} \right\}_{k\geq1}$ form a commuting family.
\end{thm}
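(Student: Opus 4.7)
The plan is to reduce everything to the diagonal case $(m,n)=(1,1)$ via the $\SL_2[\mathbb{Z}]$--action described around \pref{4.2}--\pref{4.7}, and then apply the commutator identity \pref{4.8} of Proposition~\ref{prop4.2} as the workhorse. This route is natural because the right-hand side of \pref{4.8} is already a symmetric-function multiplication operator, which is exactly what part~(a) predicts for $\Qop_{k,k}$.

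For part~(a), I would first observe that the splitting rule \pref{3.20} gives $\spl(1,1)=(1,0)+(0,1)$, so the bracket formula of the statement, specialized at $(m,n)=(1,1)$, becomes $\Qop_{k,k}=\tfrac{1}{M}[\Qop_{k-u,k-u+1},\Qop_{u,u-1}]$. For $2\leq u\leq k-1$, Proposition~\ref{prop4.2} identifies these factors as $\Psi_{k-u+1}$ and $\Phi_{u-1}$, and applying \pref{4.8} (equivalently Proposition~\ref{propcrochetphipsi}) with $a=u-1$, $b=k-u+1$ yields $\tfrac{qt}{qt-1}\nabla\uh_k[\tfrac{1-qt}{qt}\X]\nabla^{-1}$ independently of $u$. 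The edge cases $u\in\{1,k\}$ would be dispatched by a direct computation: $\Qop_{1,0}$ and $\Qop_{0,1}$ can be reached from $\Qop_{1,1}=\D_1$ using the $S^{\pm1}$--action of \pref{4.3}, and a short argument at the boundary reproduces the same multiplication operator. The base case $k=1$ serves as a quick sanity check: $\uh_1[\tfrac{1-qt}{qt}\X]=\tfrac{1-qt}{qt}e_1$, and \pref{formulaoper}(iii) collapses the right-hand side to $\D_1=\Qop_{1,1}$.

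For part~(b) and the $u$--independence at a general $(km,kn)$, the key is to note that $\A$ does belong to $\SL_2[\mathbb{Z}]$: writing $c=m-a$ and $d=n-b$, condition~(3) of \pref{3.18} gives $ad-bc=a(n-b)-(m-a)b=an-mb=1$. Then, since the $\SL_2[\mathbb{Z}]$--action on the algebra generated by $\{\D_k\}$ is by algebra automorphisms (hence commutes with brackets), applying $\A$ to the Step~1 identity produces $\A\Qop_{k,k}=\tfrac{1}{M}[\A\Qop_{k-u,k-u+1},\A\Qop_{u,u-1}]$. A direct use of \pref{4.7} gives $\A\Qop_{u,u-1}=\Qop_{(u-1)m+a,(u-1)n+b}$ and $\A\Qop_{k-u,k-u+1}=\Qop_{(k-u)m+c,(k-u)n+d}$. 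Setting $\Qop_{km,kn}:=\A\Qop_{k,k}$ therefore delivers both~(b) and the bracket formula of the theorem, with $u$--independence inherited from Step~1. The commuting-family statement is then immediate: multiplication operators commute, and algebra automorphisms preserve this, so $\Qop_{jm,jn}\Qop_{km,kn}=\A(\Qop_{j,j}\Qop_{k,k})=\A(\Qop_{k,k}\Qop_{j,j})=\Qop_{km,kn}\Qop_{jm,jn}$.

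The main obstacle will be leaning on the fact that the $\SL_2[\mathbb{Z}]$--action is genuinely well-defined on the algebra generated by the $\D_k$'s and faithfully preserves its algebra structure; this is the non-trivial content established in \cite{newPleth}, and without it the transport of the $(1,1)$ identity to $(km,kn)$ would not be legitimate. A secondary technical point is handling the boundary values $u\in\{1,k\}$ in Step~1, which fall outside the ``$a,b$ positive'' hypothesis of Proposition~\ref{propcrochetphipsi} and thus require a small separate argument rather than a direct invocation of \pref{4.8}.
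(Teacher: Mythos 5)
Your proposal is correct and follows essentially the route the paper itself indicates: the paper defers the formal proof to \cite{newPleth}, but its discussion of the four splits of $(12,8)$ makes exactly your argument, namely that the candidate bracketings for $\Qop_{km,kn}$ are the images under $\A$ of the brackets $\frac{1}{M}[\Psi_b,\Phi_a]$ of \pref{4.8}, whose common value is the $\nabla$-conjugated multiplication operator of part (a), so that $u$-independence, part (b), and commutativity all follow once the $\SL_2[\mathbb{Z}]$-action is known to preserve identities. Your handling of the boundary case is also sound (only $u=1$ actually falls outside Proposition~\ref{propcrochetphipsi}, and it can be covered by invoking Proposition~\ref{propDD} directly, which allows $a=0$).
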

 An immediate consequence of Theorem \ref{thmQind} is a very efficient recursive algorithm for computing the action of the operators $\Qop_{km,kn}$ on a symmetric function $f$.
Let us recall that the \define{Lie derivative} of an operator $X$ by an operator $Y$, which we will denote $\delta_Y X$, is simply defined by setting $\delta_Y X = [X,Y]:=XY-YX$. It follows, for instance, that
\begin{displaymath}
\delta_Y^2X= [[X,Y],Y], \qquad \delta_Y^3X= [[[X,Y],Y],Y], \qquad \ldots
\end{displaymath}
Now, our definition gives
\begin{eqnarray*}
   \Qop_{2,1} &=& \frac{1}{M} [\Qop_{1,1},\Qop_{1,0}] =\frac{1}{M}[\D_1,\D_0], \qquad{\rm and}\\
   \Qop_{3,1} &=& \frac{1}{M} [\Qop_{2,1},\Qop_{1,0}] = \frac{1}{M^2}[ [\D_1,\D_0], \D_0],
\end{eqnarray*}
and by induction we obtain
 $$\Qop_{k,1} = \frac{1}{M^{k-1}}\, \delta_{\D_0}^{k-1} \D_1.$$
Thus the action of the matrix $S$ gives
$\Qop_{k,k+1} = \tfrac{1}{M^{k-1}} \delta_{\D_1}^{k-1} \D_2$.
In conclusion we may write
\begin{displaymath}
\Qop_{k,k} = \frac{1}{M}[\Qop_{k-1,k},\Qop_{1,0}]
= \frac{1}{M}[\Qop_{k-1,k},\D_0]
= \frac{1}{M^{k-1}}\left[ \delta_{\D_1}^{k-2} \D_2, \D_0 \right].
\end{displaymath}
This leads to the following recursive general construction of the operator $\Qop_{u,v}$.
\begin{alg} \label{alg1}
Given a pair $(u,v)$ of positive integers:
\begin{enumerate}\itemsep3pt
 \item[] {\bf If} $u=1$ {\bf then} $\Qop_{u,v}:= \D_v$
 \item[]  \qquad {\bf else  if} $u<v$ {\bf then} $\Qop_{u,v}:= S \Qop_{u,v-u}$
 \item[] \qquad {\bf else if} $u>v$, {\bf then} $ \Qop_{u,v}:= N\Qop_{u-v,v}$
 \item[]  \qquad {\bf else}  $\Qop_{u,v}:= \frac{1}{M^{u-1}} \left[ \delta_{\D_1}^{u-2} \D_2, \D_0 \right]$.
 \end{enumerate}
\end{alg}
\noindent
 This assumes that $S$ acts on a polynomial in the $\D$ operators by the replacement $\D_k \mapsto \D_{k+1}$, and $N$ acts by the replacement 
     $$\D_k\   \mapsto\  (-1)^{k}\frac{1}{M^{k}}\delta_{\D_1}^{k} \D_0.$$
   We are now finally in a position to validate our construction (see Algorithm~\ref{algF}) of the operators $\BF_{km,kn}$. To this end, for any partition $\lambda =(\lambda_1,\lambda_2, \dots, \lambda_\ell)$, of length $\ell(\lambda)=\ell$, it is convenient to set
\begin{equation} \label{4.14}
h_\lambda[\X;q,t] = \left(\frac{qt}{qt-1}\right)^\ell
\prod_{i=1}^\ell h_{\lambda_i}\!\left[\frac{1-qt}{qt}\,\X\right].
\end{equation}
Notice that the collection $\left\{ h_\lambda[\X;q,t] \right\}_\lambda$ is a symmetric function basis. Thus we may carry out step one of Algorithm~\ref{algF}.
It may be good to illustrate this in a special case. For instance, when $f=e_3$ we proceed as follows. Note first that for any two expressions $A,B$ we have
\begin{displaymath}
h_3[AB]= \sum_{\lambda \vdash 3} h_\lambda[A]\, m_\lambda[B].
\end{displaymath}
Letting $A=\X(1-qt)/(qt)$ and $B=qt/(qt-1)$ gives
\begin{displaymath}
(-1)^3 e_3[\X] = h_3[-\X] = \sum_{\lambda \vdash 3} h_\lambda[\X;q,t]\, m_\lambda\! \left[ \frac{qt}{qt-1} \right] \left( \textstyle\frac{qt-1}{qt} \right)^{\ell(\lambda)}.
\end{displaymath}
Thus
\begin{equation} \label{4.16}
\Be_{3m,3n} = -\sum_{\lambda \vdash 3}
m_\lambda\! \left[ \frac{qt}{qt-1} \right]
\left(\frac{qt-1}{qt} \right)^{\ell(\lambda)}
\prod_{i=1}^{\ell(\lambda)} \Qop_{m \lambda_i, n \lambda_i}.
\end{equation}
Carrying this out gives  
\begin{displaymath}
\Be_{3m,3n}=
-\frac{1}{[3]_{qt}[2]_{qt}} \Qop_{m,n}^3 
-\frac{qt(2+qt)}{[3]_{qt}[2]_{qt}} \Qop_{m,n}\Qop_{2m,2n}
-\frac{(qt)^2}{[3]_{qt}} \Qop_{3m,3n},
\end{displaymath}
where for convenience we have set $[a]_{qt} = ({1-(qt)^a})/({1-qt})$.

To illustrate, by direct computer assisted calculation, we get
\begin{eqnarray*}
  \Be_{3,6}\cdot(-\mathbf{1}) &=&  s_{{33}}[\X]+ ( {q}^{2}+qt+{t}^{2}+q+t ) s_{{321}}[\X]\\
  &&\quad + ( {q}^{3}+{q}^{2}t+q{t}^{2}+{t}^{3}+qt ) s_{{3111}}[\X]\\
&&\quad + ( {q}^
{3}+{q}^{2}t+q{t}^{2}+{t}^{3}+qt ) s_{{222}}[\X]\\
&&\quad + ( {q}^{4}+{
q}^{3}t+{q}^{2}{t}^{2}+q{t}^{3}+{t}^{4}\\
&&\qquad\qquad+{q}^{3}+2\,{q}^{2}t+2\,q{t}^{2
}+{t}^{3}+{q}^{2}+qt+{t}^{2} ) s_{{2211}}[\X]\\
&&\quad + ( {q}^{5}+{q}
^{4}t+{q}^{3}{t}^{2}+{q}^{2}{t}^{3}+q{t}^{4}+{t}^{5}\\
&&\qquad\qquad+{q}^{4}+2\,{q}^{3
}t+2\,{q}^{2}{t}^{2}+2\,q{t}^{3}+{t}^{4}+{q}^{2}t+q{t}^{2} ) s_{
{21111}}[\X]\\
&&\quad + ( {q}^{6}+{q}^{5}t+{q}^{4}{t}^{2}+{q}^{3}{t}^{3}+{q
}^{2}{t}^{4}+q{t}^{5}+{t}^{6}\\
&&\qquad\qquad+{q}^{4}t+{q}^{3}{t}^{2}+{q}^{2}{t}^{3}+q
{t}^{4}+{q}^{2}{t}^{2} ) s_{{111111}}[\X]
\end{eqnarray*}
Conjecturally, the symmetric polynomial $\Be_{km,kn}\cdot(-\mathbf{1})^{k(n+1)}$ should be the Frobenius characteristic of a bi-graded $S_{kn}$ module. 
In particular the two expressions
\begin{displaymath}
\left\langle \Be_{3,6}\cdot(-\mathbf{1}), e_1^6 \right\rangle
\qquad {\rm and} \qquad
\left\langle \Be_{3,6}\cdot(-\mathbf{1}), s_{1^6} \right\rangle
\end{displaymath}
should respectively give the Hilbert series of the corresponding $S_6$ module and the Hilbert series of its alternants. 
\begin{figure}[ht]
\begin{center}
\dessin{width=4 in}{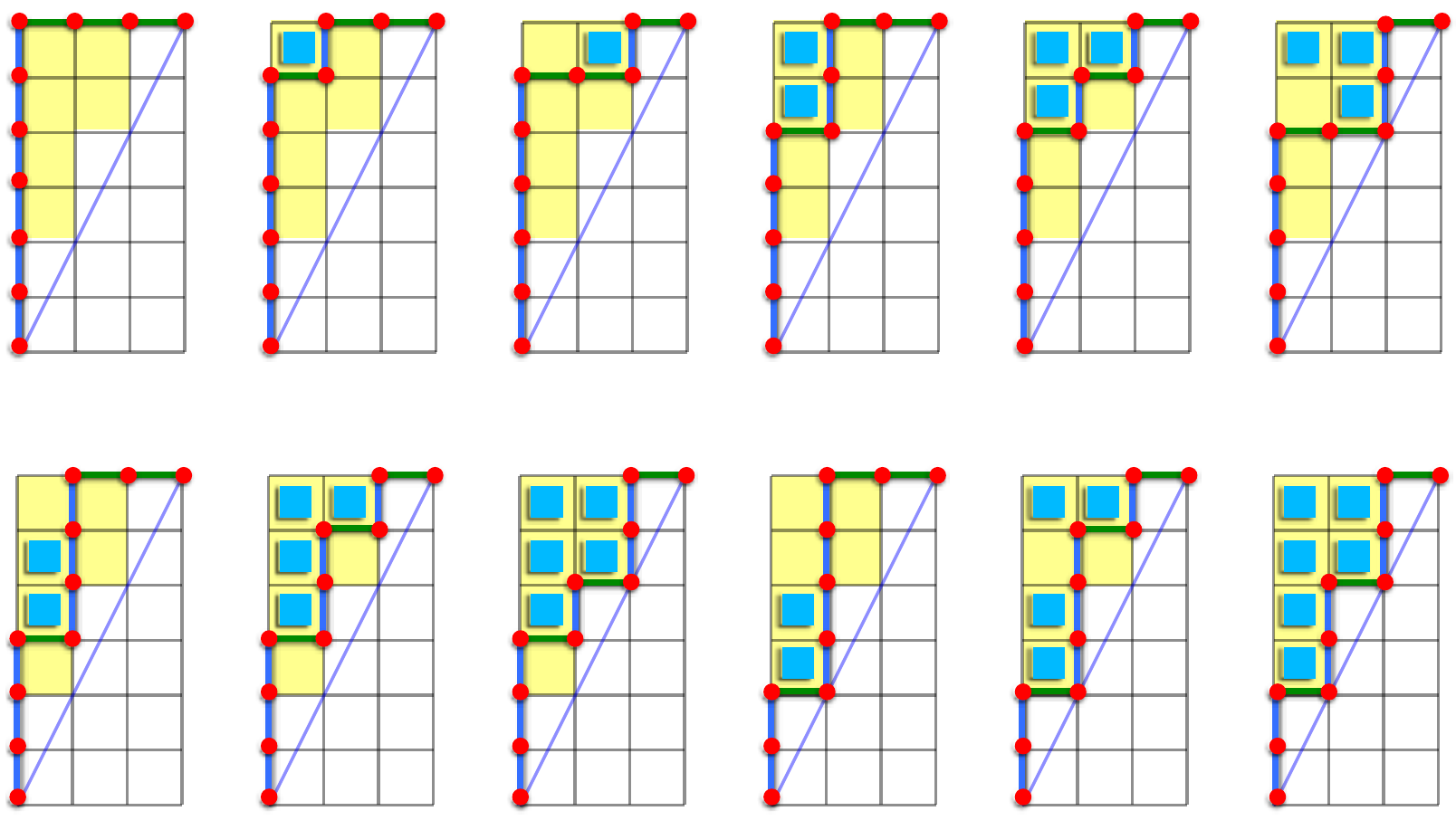}  
\caption{Dyck paths in the $3 \times 6$ lattice.}
\label{fig:ntwelves}
\end{center}
\begin{picture}(0,0)(0,0)\setlength{\unitlength}{6mm}
\put(-7.5,7.4){$1$\hskip15mm $6$\hskip15.3mm  $6$\hskip15mm  $15$\hskip14mm  $30$ \hskip12mm $15$}
\put(-7.7,2){$20$\hskip14mm $60$\hskip13mm  $60$\hskip13mm  $15$\hskip14mm  $60$ \hskip12mm $90$}
\end{picture}
\end{figure}
Our conjecture states that we should have (as in the case of the module of Diagonal Harmonics)
\begin{displaymath}
\left\langle \Be_{3,6}\cdot(-\mathbf{1}), e_1^6 \right\rangle 
= \sum_{\park\in\Parking_{3,6}}
t^{\area(\park)} q^{\dinv(\park)},
\qquad {\rm and} \qquad
\left\langle \Be_{3,6}\cdot(-\mathbf{1}), s_{1^6} \right\rangle 
= \sum_{\path \in \mathcal{D}_{3,6}} t^{\area(\path)}q^{\dinv(\path)},
\end{displaymath}
where the first sum is over all parking functions and the second is over all Dyck paths in the $3 \times 6$ lattice rectangle. 
The first turns out to be the polynomial
\begin{eqnarray*}
   &&{q}^{6}+{q}^{5}t+{q}^{4}{t}^{2}+{q}^{3}{t}^{3}+{q}^{2}{t}^{4}+q{t}^{5}
+{t}^{6}\\
&&\qquad +5\,{q}^{5}+6\,{q}^{4}t+6\,{q}^{3}{t}^{2}+6\,{q}^{2}{t}^{3}+6
\,q{t}^{4}+5\,{t}^{5}\\
&&\qquad +14\,{q}^{4}+19\,{q}^{3}t+20\,{q}^{2}{t}^{2}+19\,
q{t}^{3}+14\,{t}^{4}\\
&&\qquad +24\,{q}^{3}+38\,{q}^{2}t+38\,q{t}^{2}+24\,{t}^{3}
+\\
&&\qquad 25\,{q}^{2}+40\,qt+25\,{t}^{2}+16\,q+16\,t+5.
\end{eqnarray*}
Setting first $q=1$, we get
	$${t}^{6}+6\,{t}^{5}+21\,{t}^{4}+50\,{t}^{3}+90\,{t}^{2}+120\,t+90,$$
which evaluates to $378$ at $t=1$; the second polynomial
\begin{equation} \label{4.17}
 {q}^{6}+{q}^{5}t+{q}^{4}{t}^{2}+{q}^{3}{t}^{3}+{q
}^{2}{t}^{4}+q{t}^{5}+{t}^{6} +{q}^{4}t+{q}^{3}{t}^{2}+{q}^{2}{t}^{3}+q
{t}^{4}+{q}^{2}{t}^{2} ,
\end{equation}
evaluates to $12$, at $q=t=1$. 
All this is beautifully confirmed on the combinatorial side. Indeed, there are 12 Dyck paths in the $3\times 6$ lattice, as presented in Figure \ref{fig:ntwelves}. A simple calculation verifies that the total number of column-increasing labelings of the north steps of these Dyck paths (as recorded in Figure~\ref{fig:ntwelves} below each path) by a permutation of $\{1,2,\dots,6\}$,  is indeed $378$. One may carefully check that this coincidences still holds true when one takes into account the statistics area and dinv.

In the next section we give the construction of the parking function statistics that must be used to obtain the polynomial  $\Be_{3,6}\cdot(-\mathbf{1})$ by purely combinatorial methods. Figure~\ref{fig:ntwelves}  shows the result of a procedure that places a square in each lattice cell, above the path, that contributes a unit to the dinv of that path. Taking into account that the area is the number of lattice squares between the path and the lattice diagonal, the reader should have no difficulty seeing that  the polynomial in \pref{4.17} is indeed the $q,t$-enumerator of the above Dyck paths by $\dinv$ and $\area$. 

\begin{rmk} In  retrospect, our construction of the operators $\BC_{km,kn}^{(\alpha)}$ has a certain degree of inevitability. In fact, since the multiplication operator 
    $$\frac{qt}{qt-1}\, \uh_k\!\left[\frac{1-qt}{qt}\,\X\right]$$
   corresponds to the operator $\Qop_{0,k}$, and since we must have
\begin{displaymath}
\Qop_{k,k} = \nabla \Qop_{0,k} \nabla ^{-1} = \frac{qt}{qt-1} \nabla \uh_k\!\left[\frac{1-qt}{qt}\,\X\right] \nabla ^{-1}
\end{displaymath}
 by Proposition \ref{prop4.1},
then it becomes natural to set 
\begin{displaymath}
\Bf_{k,k} := \nabla \underline{f} \nabla ^{-1}.
\end{displaymath}
for any symmetric function $f$ homogeneous of degree $k$.
Therefore using the matrix $\A$ of Theorem \ref{thmQind}, we obtain
\begin{equation} \label{4.18}
\Bf_{km,kn}= \A \Bf_{k,k}
\end{equation}
In particular, it  follows (choosing $f=e_k$)  that
\begin{displaymath}
\Be_{k,k}\un = \nabla e_k\nabla^{-1} \un= \nabla e_k.
\end{displaymath}
The expansion
$e_k = \sum_{\alpha\models k} C_\alpha \un$
yields the decomposition $\Be_{k,k} = \sum_{\alpha\models k} \BC_{k ,k }^{(\alpha)}$,
and \pref{4.18} then yields
$$
\Be_{km,kn}= \sum_{\alpha\models k}\BC_{km ,kn }^{(\alpha)}.
$$
\qed
\end{rmk}

%%%%%%%%%%%%%%%%%%%%%%%%%%%%%%%%%%

\section{The combinatorial side, further extensions and conjectures.}

Our construction of the parking function statistics in the non-coprime case closely follows what we did in section 1 with appropriate modifications necessary to
resolve conflicts that did not arise in the coprime case. For clarity we will present our definitions as a collection of algorithms which can be directly implemented on a computer.

The symmetric polynomials arising from the right hand sides of our conjectures may also be viewed as   Frobenius characteristics of certain bi-graded $S_n$ modules. Indeed, they are shown to be by \cite{Hikita} in the coprime case. Later in this section we will present some conjectures to this effect.

As for Diagonal Harmonics (see \cite{Shuffle}), all these  Frobenius Characteristics are sums of LLT polynomials. More precisely, a $(km,kn)$-Dyck path  $\path$ may be represented by a vector
\begin{equation} \label{5.1}
\U=(u_1,u_2,\dots,u_{kn}) \qquad
\quad {\rm with}\quad u_0=0, 
\qquad{\rm and}\qquad  u_{i-1} \leq u_i \leq (i-1)\frac{m}{n},
\end{equation}
for all  $2\leq i\leq kn$.
This  given, we set
\begin{equation} \label{5.2}
\LLT(m,n,k;\U) := \sum_{\park \in\Parking(\U)}
t^{\area(\U)} q^{\dinv(\U) + \tdinv(\park) - \mdinv(\U)} s_{\pides(\park)}[\X],
\end{equation}
%\begin{wrapfigure}[6]{r}{4.5cm} \centering
%    \vskip-15pt
%     \dessin{height=1.2 in}{ninesix.pdf}
%     \begin{picture}(0,0)(-3,0)\setlength{\unitlength}{3mm}
%\put(-4.2,10.3){$\scriptstyle\arm$}
%\put(-7.1,7.3){$\scriptstyle\leg$}
%\end{picture}
%\end{wrapfigure}
where $\Parking(\U)$ denotes the collection of parking functions supported by the path corresponding to $\U$. We also use here the Egge-Loehr-Warrington (see \cite{expansions}) result and substitute the Gessel fundamental by the Schur function indexed by $\pides(\park)$ (the descent composition of the inverse of the permutation $\sigma(\park)$). 
Here $\sigma(\park)$ and the other statistics used in \pref{5.2} are constructed according to the following algorithm.

%\begin{figure}[ht]
%\begin{center}
%\includegraphics[height=1 in]{ninesix.pdf}  
%\caption{A $12 \times 8$-Dyck path.}
%\label{fig:ninesix}
%\end{center}
%\end{figure}

\begin{alg} \label{algL}
 \begin{enumerate}
 
\item Construct the collection $\Parking(\U)$ of vectors $\V=( v_1, v_2, \dots, v_{kn} )$ which are the permutations of $1, 2, \dots, kn$ that satisfy the conditions
\begin{displaymath}
u_{i-1}=u_i \quad\implies\quad v_{i-1}<v_{i}.
\end{displaymath} 
\item Compute the $\area$ of the path, that is the number of full cells between the path and the
main diagonal of the $km\times kn$ rectangle, by the formula
\begin{displaymath}
\area(\U)= (kmkn - km - kn + k)/2 - \sum_{i=1}^{kn} u_i .
\end{displaymath}
\item Denoting by $\lambda(\U)$ the partition above the path, set% {\rm (}see figure above{\rm )}, 
%for each cell $c \in \lambda(\U)$ let $a(c)$ and $\ell(c)$ denote the arm and leg of $c$ in $\lambda(\U)$.  
\begin{displaymath}
\dinv(\U) = \sum_{c \in \lambda(\U)} \charac\!\left( \frac{\arm(c)}{\leg(c)+1} \le \frac{m}{n} < \frac{\arm(c)+1}{\leg(c)}\right).
\end{displaymath}
\item Define the $\rank$ of the $i^{th}$ north step by $km(i-1) - kn u_i + u_i/(km+1)$ and accordingly use this number as the $\rank$ of car $v_i$, which we will denote as $\rank(v_i)$. This given, we set, for $\park = \Big[\begin{matrix} \U \\[-3pt] \V \end{matrix}\Big]$
\begin{displaymath}
\tdinv(\park) = \sum_{1\leq r<s \leq kn}
\charac\left( \rank(r) < \rank(s) < \rank(r)+km \right).
\end{displaymath}
\item Define $\sigma(\park)$ as the permutation of $1, 2, \dots, kn$ obtained by reading the cars by decreasing ranks. Let $\pides(\park)$ be the composition whose first $kn-1$ partial sums give the descent set of the inverse of $\sigma(\park)$.
\item Finally $\mdinv(\U)$ may be computed as $\max \{\tdinv(\park) : \park \in\Parking(\U) \}$, or more efficiently as the $\tdinv$ of the $\park$ whose word $\sigma(\park)$ is the reverse permutation $(kn)\cdots 3 2 1$.
\end{enumerate}
\end{alg}
 This completes our definition of the polynomial $\LLT(m,n,k;\U)$, which may be shown to expand as a linear combination of Schur functions with coefficients in $\N[q,t]$. It may be also be shown that, at $q=1$, the polynomial $\LLT(m,n,k;\U)$ specializes to $t^{\area(\U)}e_{\lambda(\U)}[\X]$, with $\lambda(\U)$ the partition giving the multiplicities of the components of $\U_{km,kn}-\U$, with $\U_{km,kn}$ the vector encoding the unique $0$-area $(km,kn)$-Dyck path.
 
 With the above definition at hand, Conjecture \ref{conjBGLX} can be restated as
\begin{equation} \label{5.3}
\BC^{(\alpha)}_{km,kn}\cdot (-\mathbf{1})^{k(n+1)}
=\sum_{\U \in \mathcal{U}(\alpha)}\LLT(m,n,k;\U)
\end{equation}
where $\mathcal{U}(\alpha)$ denotes the collection of all $\U$ vectors satisfying \pref{5.1} whose corresponding Dyck path hits the diagonal of the $km\times kn$ rectangle according to the composition $\alpha\models k$. Alternatively we can simply require that $\alpha$ be the composition of $k$ corresponding to the subset
\begin{displaymath}
\left\{ 1\leq i\leq k-1 : u_{ni+1}=i m \right\}.
\end{displaymath}
Note that, although the operators $\BB_b$ and $ \sum_{\beta \models b}
\BC_{\beta}$ are different, in view of definition \pref{defC} they take the same value on constant symmetric functions
\begin{equation} \label{5.4}
\BB_b \un = e_b[\X] = \sum_{\beta \models b}
\BC_{\beta} \un.
\end{equation}
This circumstance, combined with the commutativity  relation (proved in \cite{classComp}) 
\begin{equation} \label{5.5}
\BB_b \BC_\gamma= q^{\ell(\gamma)} \BC_\gamma \BB_b, 
\end{equation}
for all compositions $\gamma$,
enables us to derive a $(km, kn)$ version of the 
Haglund-Morse-Zabrocki conjecture \ref{conjHMZB}. To see how this comes about we briefly reproduce an argument first given in \cite{classComp}. 
Exploiting \pref{5.4} and \pref{5.5}, we calculate that
\begin{align*}
\BB_a \BB_b \BB_c \un &= \BB_a \BB_b \sum_{\gamma \models c} \BC_{\gamma} \un \\ 
&= \BB_a \sum_{\gamma \models c}q^{\ell(\gamma)} \BC_{\gamma} \BB_b \un \\ 
&= \BB_a \sum_{\gamma\models c} q^{\ell(\gamma)} \BC_{\gamma}\sum_{\beta \models b} \BC_{\beta} \un \\ 
&= \sum_{\gamma\models c} q^{2 \ell(\gamma)} \BC_{\gamma}\sum_{\beta\models b} q^{\ell(\beta)} \BC_{\beta}\sum_{\alpha \models a} \BC_{\alpha} \un.
\end{align*}
From this example one can easily derive that the following  identity holds true in full generality.
\begin{prop} \label{5.1}
For any $\beta=(\beta_1,\beta_2,\dots ,\beta_k)$, we have
\begin{equation} \label{5.6}
\BB_{\beta} \un
= \sum_{\alpha \preceq \beta} q^{c(\alpha,\beta)}
\BC_{\alpha} \un
\end{equation}
where $\alpha \preceq \beta$ here means that $\alpha$ is a refinement of the reverse of $\beta$. That is $\alpha = \alpha^{(k)} \cdots \alpha^{(2)}\alpha^{(1)}$ with $\alpha^{(i)} \models \beta_{i}$, and in that case
\begin{displaymath}
c(\alpha,\beta) = \sum_{i=1}^k (i-1)\, \ell(\alpha^{(i)}).
\end{displaymath}
\end{prop}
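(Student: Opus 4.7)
The proof will be a direct induction on $k=\ell(\beta)$, using only the two ingredients already highlighted just before the statement: the identity \pref{5.4} expressing $\BB_b\un$ as the sum $\sum_{\beta\models b}\BC_\beta\un$, and the commutation relation \pref{5.5} $\BB_b\BC_\gamma=q^{\ell(\gamma)}\BC_\gamma\BB_b$. The example computing $\BB_a\BB_b\BB_c\un$ already carries out the $k=3$ instance, so the proof amounts to extracting the general mechanism from that example and tracking the $q$-exponent carefully.

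For the base case $k=1$, the claim is exactly \pref{5.4} since $c(\alpha,(\beta_1))=(1-1)\ell(\alpha^{(1)})=0$. For the inductive step, factor $\BB_\beta=\BB_{\beta_1}\BB_{(\beta_2,\dots,\beta_k)}$ and apply the induction hypothesis to the tail composition $\gamma=(\beta_2,\dots,\beta_k)$ of length $k-1$. After reindexing (the $j$-th block of $\gamma$ corresponds to the $(j+1)$-st part of $\beta$), this yields
$$\BB_{(\beta_2,\dots,\beta_k)}\un \,=\, \sum_{\alpha'} q^{\sum_{i=2}^{k}(i-2)\,\ell(\alpha^{(i)})}\,\BC_{\alpha'}\un,$$
where the sum runs over all concatenations $\alpha'=\alpha^{(k)}\cdots\alpha^{(2)}$ with $\alpha^{(i)}\models\beta_i$. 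Now apply $\BB_{\beta_1}$ to each term, push it to the right past $\BC_{\alpha'}$ by repeated use of \pref{5.5}, which contributes a factor $q^{\ell(\alpha')}=q^{\sum_{i=2}^{k}\ell(\alpha^{(i)})}$, and finally expand $\BB_{\beta_1}\un=\sum_{\alpha^{(1)}\models\beta_1}\BC_{\alpha^{(1)}}\un$ by \pref{5.4}. Collecting the adjacent $\BC$-operators into a single concatenation $\alpha=\alpha^{(k)}\cdots\alpha^{(2)}\alpha^{(1)}$ produces exactly a sum over $\alpha\preceq\beta$ in the sense of the statement.

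It remains to verify that the total $q$-exponent matches $c(\alpha,\beta)$. Adding the inductive contribution and the commutation contribution gives
$$\sum_{i=2}^{k}(i-2)\,\ell(\alpha^{(i)})+\sum_{i=2}^{k}\ell(\alpha^{(i)})=\sum_{i=2}^{k}(i-1)\,\ell(\alpha^{(i)})=\sum_{i=1}^{k}(i-1)\,\ell(\alpha^{(i)}),$$
the last equality holding because the $i=1$ term vanishes. This is precisely the formula for $c(\alpha,\beta)$, completing the induction.

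There is no genuine obstacle here; the only point requiring care is bookkeeping. In particular, the reversal convention ``$\alpha$ refines the reverse of $\beta$'' is forced by the proof itself: the part $\alpha^{(1)}\models\beta_1$ is produced \emph{last}, after $\BB_{\beta_1}$ has been commuted all the way to the right, so $\alpha^{(1)}$ ends up as the rightmost block of the concatenation, while $\alpha^{(k)}\models\beta_k$ (the block produced by the outermost application of \pref{5.4} at the start of the induction) sits at the far left. The weighting $(i-1)$ on $\ell(\alpha^{(i)})$ likewise counts exactly how many times the block $\alpha^{(i)}$ gets traversed by some $\BB_{\beta_j}$ (namely for $j=1,\dots,i-1$) during the successive applications of \pref{5.5}.
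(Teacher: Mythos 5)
Your proof is correct and is essentially the paper's own argument: the paper only works out the $k=3$ case $\BB_a\BB_b\BB_c\un$ explicitly using \pref{5.4} and \pref{5.5} and asserts that the general case "can easily be derived," and your induction on $\ell(\beta)$ is precisely the formalization of that mechanism, with the $q$-exponent bookkeeping and the reversal convention both handled correctly. Nothing further is needed.
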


Using \pref{5.6} we can easily derive the following.
\begin{prop} Assuming that Conjecture~\ref{conjBGLX} holds, then
for all compositions 
   $$\beta=(\beta_1,\beta_2,\dots ,\beta_{\ell)}\models k$$ we have 
\begin{displaymath}
\BB_{km,kn}^{(\beta)}\cdot (-\mathbf{1})^{k(n+1)}
= \sum_{\alpha \preceq \beta} q^{c(\alpha,\beta)}
\sum_{\park \in\Parking_{km,kn}^{(\alpha)}}
t^{\area(\park)} q^{\dinv(\park)} F_{\ides(\park)}.
\end{displaymath}
Here $\BB_{km,kn}^{(\beta)}$ is the operator obtained by setting $f=\BB_{\beta} \un$ in Algorithm~\ref{algF} and, as before, $\Parking_{km,kn}^{(\alpha)}$ denotes the collection of parking functions in the $km\times kn$ lattice whose Dyck path hits the diagonal according to the composition $\alpha$.
\end{prop}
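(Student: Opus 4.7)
The plan is to combine three ingredients: Proposition~\ref{5.1} (the expansion of $\BB_\beta\un$ in terms of $\BC_\alpha\un$), the $\mathbb{Q}(q,t)$-linearity of Algorithm~\ref{algF} in its input symmetric function $f$, and the assumed Compositional $(km,kn)$-Shuffle Conjecture~\ref{conjBGLX}. Schematically, I would push the decomposition of $\BB_\beta\un$ as a sum of $\BC_\alpha\un$'s through Algorithm~\ref{algF} to get a matching decomposition of $\BB_{km,kn}^{(\beta)}$ as a sum of $\BC_{km,kn}^{(\alpha)}$'s, then apply each term to $(-\mathbf{1})^{k(n+1)}$ and invoke Conjecture~\ref{conjBGLX}.

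The first step would be to verify the linearity of Algorithm~\ref{algF}. Because $\left\{\prod_i \Qop_{0,\lambda_i}\right\}_{\lambda}$ is a basis for the space of symmetric functions (viewed as multiplication operators), the map that sends a homogeneous symmetric function $f$ of degree $k$ to its operator $\Bf_{km,kn}$ defined by \pref{defnF} is manifestly a $\mathbb{Q}(q,t)$-linear map, since it simply takes the coordinates $c_\lambda(q,t)$ of $f$ in the $\Qop_{0,\lambda}$-basis and uses them as coordinates in the family $\left\{\prod_i\Qop_{m\lambda_i,n\lambda_i}\right\}_\lambda$. Consequently, applying this linearity to the identity of Proposition~\ref{5.1} (with the choice of input $f=\BB_\beta\un$ on the left, and $\sum_\alpha q^{c(\alpha,\beta)}\BC_\alpha\un$ on the right, each of which is homogeneous of degree $k$) immediately yields the operator identity
\begin{equation*}
\BB_{km,kn}^{(\beta)}\ =\ \sum_{\alpha\preceq\beta} q^{c(\alpha,\beta)}\,\BC_{km,kn}^{(\alpha)}.
\end{equation*}

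The second step is then to apply both sides of this identity to $(-\mathbf{1})^{k(n+1)}$ and use Conjecture~\ref{conjBGLX}, which by hypothesis provides, for each composition $\alpha\models k$,
\begin{equation*}
\BC_{km,kn}^{(\alpha)}\cdot(-\mathbf{1})^{k(n+1)}\ =\ \sum_{\park\in\Parking_{km,kn}^{(\alpha)}}
t^{\area(\park)}q^{\dinv(\park)}F_{\ides(\park)}.
\end{equation*}
Substituting this into the previous display, with the scalar $q^{c(\alpha,\beta)}$ pulled out of each inner sum, produces exactly the claimed formula.

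The one point that requires care, and which I would treat as the main (minor) obstacle, is making sure that the operator identity obtained from Proposition~\ref{5.1} via Algorithm~\ref{algF} truly coincides with $\BB_{km,kn}^{(\beta)}$ as defined in the statement of the proposition. The definition of $\BB_{km,kn}^{(\beta)}$ takes $f=\BB_\beta\un$ as input, so one needs to observe that Proposition~\ref{5.1} is an equality of symmetric functions (obtained by applying both operator sides to the constant $\mathbf{1}$), and that consequently it is legitimate to use either side as the input $f$ in Algorithm~\ref{algF}; by the linearity noted above, the two resulting operators agree. Once this is acknowledged, no further combinatorial argument is needed: the remainder of the proof is a purely formal combination of the three cited results.
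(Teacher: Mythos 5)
Your proposal is correct and follows exactly the paper's (very briefly indicated) argument: the paper simply states that the result is derived from Proposition~\ref{5.1} via the linearity of Algorithm~\ref{algF} in its input $f$ together with Conjecture~\ref{conjBGLX}. Your careful observation that $\Bf_{km,kn}$ depends linearly on $f$ because Algorithm~\ref{algF} just transports coordinates in the $\{\prod_i\Qop_{0,\lambda_i}\}_\lambda$ basis is precisely the point the paper leaves implicit.
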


A natural question that arises next is what parking function interpretation may be given to the 
polynomials  $\Qop_{km,kn} \cdot(-\mathbf{1})^{k(n+1)}$. Our attempts to answer this question lead to a variety of interesting identities. We start with a known identity    and two new ones.

\begin{thm} \label{thmQsquare} For all $n$, we have
\begin{eqnarray} 
\Qop_{n,n+1} \unn &=& \nabla e_n,\qquad {\rm and}\label{5.7a}\\
 \Qop_{n,n} \unn &=& (-qt)^{1-n}\nabla \Delta_{e_1} h_n=\Delta_{e_{n-1}} e_n.\label{5.7b}
\end{eqnarray}
\end{thm}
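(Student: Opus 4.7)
For part~(a), I would unfold the recursive definition. Since $(1,n)$ is coprime, Definition~\ref{defnQ} gives $\Qop_{1,n}=\D_n$. Specialising Proposition~\ref{prop4.1} to $(m,n)\mapsto(1,n)$ then yields $\Qop_{n+1,n}=\nabla\D_n\nabla^{-1}$. Applying this operator to $(-\mathbf{1})^n$, using $\nabla^{-1}\cdot\mathbf{1}=\mathbf{1}$ and reading off $\D_n\cdot\mathbf{1}=(-1)^n e_n$ as the $z^n$-coefficient of $\sum_{i\geq 0}(-z)^i e_i[X]$ from \pref{defD}, we obtain
\[
\Qop_{n+1,n}\cdot(-\mathbf{1})^n \;=\; (-1)^n\,\nabla\,\D_n\cdot\mathbf{1} \;=\; (-1)^n\,\nabla\,(-1)^n e_n \;=\; \nabla e_n.
\]
(We parse the left-hand side of \pref{5.7a} as $\Qop_{n+1,n}$; this is forced by degree matching with $\nabla e_n$ and is consistent with the remark made just after Conjecture~\ref{conjNG}.)

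For part~(b), the starting point is Theorem~\ref{thmQind}(a), which gives the operator identity $\Qop_{n,n}=\frac{qt}{qt-1}\,\nabla\,\uh_n\!\big[\tfrac{1-qt}{qt}X\big]\,\nabla^{-1}$. Applied to $(-\mathbf{1})^n$, with $\nabla^{-1}\cdot\mathbf{1}=\mathbf{1}$, this yields
\[
\Qop_{n,n}\cdot(-\mathbf{1})^n \;=\; \frac{(-1)^n\,qt}{qt-1}\,\nabla\,h_n\!\big[\tfrac{1-qt}{qt}X\big].
\]
I would then prove the two equalities in \pref{5.7b} by expanding each side in the modified Macdonald basis $\{\widetilde{H}_\mu\}_{\mu\vdash n}$ and matching scalar coefficients, exploiting the eigenvalue identities $\nabla\,\widetilde{H}_\mu=T_\mu\widetilde{H}_\mu$ and $\Delta_f\,\widetilde{H}_\mu=f[B_\mu]\widetilde{H}_\mu$ which hold here because every operator in sight is diagonal in this basis.

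Two combinatorial identities power the argument: (i) the elementary relation $e_{n-1}[B_\mu]=T_\mu\cdot B_\mu(1/q,1/t)$ for $\mu\vdash n$, which follows from $e_{n-1}(x_1,\ldots,x_n)=e_n(x)\sum_i x_i^{-1}$ applied to the $n$ contents $t^{\coleg(c)}q^{\coarm(c)}$ of $\mu$; and (ii) the $\widetilde{H}_\mu$-expansion of $h_n[(1-qt)X/(qt)]$, extracted from the Macdonald tool-kit identities \pref{3.5}--\pref{3.6} and \pref{formulaoper} together with the norm $w_\mu$ and the $\Pi_\mu$-factor. Matching $\widetilde{H}_\mu$-coefficients then collapses each equality of \pref{5.7b} to a $q,t$-polynomial identity in the cell data of $\mu$; the $(-qt)^{1-n}$ normalisation arises as the global $T_\mu$-factor that relates the $\widetilde{H}_\mu$-coefficient of $e_n$ to that of $h_n$.

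The principal obstacle is the $\widetilde{H}_\mu$-basis bookkeeping in part~(b): although $\nabla$, $\Delta_{e_1}$, $\Delta_{e_{n-1}}$ and $\Qop_{n,n}$ all act diagonally, the precise proportionality relating the coefficients of $\widetilde{H}_\mu$ in $h_n$ and in $e_n$ is non-trivial, and must be pinned down cleanly via the deformed scalar product \pref{3.6} before the global $(-qt)^{1-n}$-factor can be identified. Once this relation is in hand, both equalities follow by routine algebra in $q,t$ and the cell statistics of $\mu$.
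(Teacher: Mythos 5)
Your proposal follows essentially the same route as the paper: part (a) is identical (unfold $\Qop_{n+1,n}=\nabla\D_n\nabla^{-1}$ and evaluate on constants), and for part (b) the paper likewise starts from Theorem~\ref{thmQind}(a), works diagonally in the $\widetilde H_\mu$ basis, and uses precisely your identity $e_{n-1}[B_\mu]=T_\mu B_\mu(1/q,1/t)$ together with the quoted expansions \pref{5.11a}--\pref{5.11b} of $e_n$ and $h_n$. The one step you defer as ``routine'' --- relating $\frac{qt}{qt-1}h_n[\frac{1-qt}{qt}\X]$ to $(-qt)^{1-n}\Delta_{e_1}h_n$ --- is exactly where the paper invests its effort, proving both sides equal the hook-Schur sum $\sum_{k}(-qt)^k s_{n-k,1^k}$ (Propositions~\ref{prop5.3} and~\ref{prop5.4}) rather than matching $\widetilde H_\mu$-coefficients directly; your plan is workable but this is the part that still needs to be written out.
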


\begin{proof}[\bf Proof]
Since $\Qop_{n+1,n} = \nabla \Qop_{1,n}\nabla ^{-1}$ and $\Qop_{1,n} = \D_n$, then
\begin{eqnarray*}
\Qop_{n,n+1} \unn &=& \nabla \D_n \nabla ^{-1} \unn\\
     &=&  \nabla \D_n \unn. 
\end{eqnarray*}
Recalling that  $\nabla^{-1} \, \unn =(-1)^{n}$ and $\D_n \un = (-1)^n e_n$, this gives \pref{5.7a}. The proof of \pref{5.7b} is a bit more laborious.  We will obtain it below, by combining a few auxiliary identities.
\end{proof}

\begin{prop} \label{prop5.2}
For any monomial $m$ and $\lambda \vdash n$
\begin{equation} \label{5.8}
s_\lambda[1-m] = 
\begin{cases}
(-m)^k (1-m) & \hbox{\rm if}\  \lambda = (n-k,1^k)\ \hbox{\rm for some }\ 0\leq k\leq n-1,\\[4pt]
0 &  \hbox{\rm otherwise.}
\end{cases}
\end{equation}
\end{prop}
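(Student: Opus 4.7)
My plan is to use the Jacobi--Trudi determinant formula,
$$s_\lambda[1-m] \;=\; \det\!\bigl(h_{\lambda_i - i + j}[1-m]\bigr)_{1\le i,j\le \ell(\lambda)},$$
which holds for any plethystic argument since Jacobi--Trudi is an identity of symmetric functions. Thus everything reduces to (i) computing the scalars $h_n[1-m]$ and (ii) evaluating the resulting determinants for hooks and for non-hooks.

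For step (i), I would use the generating function
$$\sum_{n\ge 0} z^n h_n[1-m] \;=\; \Omega[z(1-m)] \;=\; \Omega[z]\,\Omega[-zm] \;=\; \frac{1-zm}{1-z},$$
whose expansion gives the key values
$$h_0[1-m] = 1, \qquad h_n[1-m] = 1-m \ \text{ for all } n\ge 1,$$
and $h_n[1-m]=0$ for $n<0$ (by convention). In particular every entry of the Jacobi--Trudi matrix belongs to $\{0,\,1,\,1-m\}$.

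For step (ii), the non-hook case is the quick one. If $\lambda_2\ge 2$, then the entries of row $1$ are $h_{\lambda_1-1+j}[1-m]$ with $\lambda_1-1+j\ge 1$ for all $j\ge 1$, so row $1$ is the constant row $(1-m,1-m,\dots,1-m)$; similarly, since $\lambda_2-2+j\ge \lambda_2-1\ge 1$ for all $j\ge 1$, row $2$ is also the constant row $(1-m,1-m,\dots,1-m)$. Rows $1$ and $2$ coincide, so the determinant vanishes, giving $s_\lambda[1-m]=0$.

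For a hook $\lambda=(n-k,1^k)$, the $(k+1)\times(k+1)$ Jacobi--Trudi matrix is upper Hessenberg with $(1-m)$'s on and above the subdiagonal and $1$'s on the subdiagonal (and $0$'s below). Subtracting row $2$ from row $1$ replaces row $1$ by $(-m,0,0,\dots,0)$; expanding along the new first row leaves a matrix of the same shape but size $k\times k$. Induction on $k$ (with base $k=0$ giving $h_n[1-m]=1-m$) then yields
$$s_{(n-k,1^k)}[1-m] \;=\; (-m)^k(1-m),$$
which completes the proof. I expect no serious obstacle; the only step requiring care is keeping track of indices so that the inductive step truly reproduces a smaller instance of the same Hessenberg matrix, but this is a routine bookkeeping matter.
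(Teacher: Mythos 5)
Your proof is correct, but it takes a genuinely different route from the paper's. The paper disposes of Proposition \ref{prop5.2} in one line by invoking the addition formula $s_\lambda[X+Y]=\sum_{\mu\subseteq\lambda}s_{\lambda/\mu}[X]\,s_\mu[Y]$: with $X=1$ and $Y=-m$, the plethystic sign rule gives $s_\mu[-m]=(-1)^{|\mu|}s_{\mu'}[m]$, which vanishes unless $\mu=(1^j)$ is a single column (where it equals $(-m)^j$), while $s_{\lambda/\mu}[1]$ vanishes unless $\lambda/\mu$ is a horizontal strip; the only shapes $\lambda$ admitting a column $\mu$ with $\lambda/\mu$ a horizontal strip are the hooks $(n-k,1^k)$, with $\mu=(1^k)$ or $(1^{k+1})$, whence $(-m)^k+(-m)^{k+1}=(-m)^k(1-m)$. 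You instead evaluate the Jacobi--Trudi determinant directly, after computing $h_n[1-m]$ from $\Omega[z(1-m)]=(1-zm)/(1-z)$; your non-hook case (two identical rows) and hook case (row reduction plus induction) are both sound. Your argument is more self-contained --- it needs only the generating function of the $h_n$'s and elementary row operations --- while the paper's is shorter and makes the hook-shape condition combinatorially transparent. One small slip in your wording: the Hessenberg matrix has $(1-m)$'s on and above the \emph{diagonal} (not the subdiagonal) and $1$'s on the subdiagonal; your subsequent computation does use the correct matrix.
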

\noindent
This is an easy consequence of the addition formula for Schur functions.

\begin{prop}\label{prop5.3} For all $n$,
\begin{equation} \label{5.9}
\frac{qt}{qt-1} h_n\! \left[ \frac{1-qt}{qt}\,\X\right] 
= -(qt)^{1-n} \sum_{k=0}^{n-1} (-qt)^k s_{n-k,1^k}[\X].
\end{equation}
\end{prop}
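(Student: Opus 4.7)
The plan is to derive \pref{5.9} directly from the preceding Proposition~\ref{prop5.2} by applying the dual Cauchy identity plethystically. Specifically, since $s_{\lambda}$ is homogeneous of degree $n$, the right-hand side of \pref{5.9} is a Schur expansion supported only on hook shapes $(n-k,1^k)$, and this is exactly the support predicted by Proposition~\ref{prop5.2}. So the strategy is to expand the left-hand side in the Schur basis using Cauchy, evaluate the Schur coefficients via Proposition~\ref{prop5.2}, and simplify.

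First I would apply the plethystic Cauchy identity
\[
h_n[AB]=\sum_{\lambda\vdash n} s_\lambda[A]\,s_\lambda[B]
\]
with $A=(1-qt)/(qt)$ and $B=\X$, giving
\[
h_n\!\left[\tfrac{1-qt}{qt}\,\X\right]=\sum_{\lambda\vdash n} s_\lambda\!\left[\tfrac{1-qt}{qt}\right] s_\lambda[\X].
\]
Next, because $s_\lambda$ is homogeneous of degree $n$, the scalar $1/(qt)$ pulls out as $(qt)^{-n}$:
\[
s_\lambda\!\left[\tfrac{1-qt}{qt}\right]=(qt)^{-n}\, s_\lambda[1-qt].
\]
Now I would invoke Proposition~\ref{prop5.2} with $m=qt$, which says $s_\lambda[1-qt]$ vanishes unless $\lambda$ is a hook $(n-k,1^{k})$ with $0\le k\le n-1$, in which case
\[
s_{n-k,1^k}[1-qt]=(-qt)^k(1-qt).
\]
This already confirms that only hook Schur functions appear on the left-hand side, matching the right-hand side.

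Finally I would multiply through by $\tfrac{qt}{qt-1}$ and use the crucial simplification
\[
\frac{qt}{qt-1}\,(1-qt)=-qt,
\]
which turns the accumulated factor $(qt)^{-n}\cdot(-qt)^k\cdot\tfrac{qt}{qt-1}(1-qt)$ into $-(qt)^{1-n}(-qt)^k$, yielding
\[
\frac{qt}{qt-1}h_n\!\left[\tfrac{1-qt}{qt}\,\X\right]=-(qt)^{1-n}\sum_{k=0}^{n-1}(-qt)^k s_{n-k,1^k}[\X],
\]
as desired. There is no real obstacle here; the only thing to be careful about is bookkeeping the signs and powers of $qt$, and recognizing that the prefactor $\tfrac{qt}{qt-1}$ in \pref{5.9} is precisely what is needed to absorb the $(1-qt)$ produced by Proposition~\ref{prop5.2}.
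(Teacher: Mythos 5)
Your proposal is correct and follows essentially the same route as the paper: expand via the Cauchy identity, pull out $(qt)^{-n}$ by homogeneity, and apply Proposition~\ref{prop5.2} with $m=qt$ to reduce to hooks, after which the prefactor $\tfrac{qt}{qt-1}$ absorbs the $(1-qt)$. The sign and power bookkeeping checks out.
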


\begin{proof}[\bf Proof]
The Cauchy formula gives
\begin{displaymath}
\frac{qt}{qt-1} h_n\!\left[\frac{1-qt}{qt}\,\X\right]
= \frac{(qt)^{1-n}}{qt-1} \sum_{\lambda \vdash n} s_\lambda[\X] s_\lambda[1-qt] 
\end{displaymath}
and \pref{5.8} with $m=qt$ proves \pref{5.9}.
\end{proof}
\noindent
Observe that when we set $qt=1$, the right hand side of \pref{5.9} specializes to $-p_n$.

\begin{prop} \label{prop5.4} For all $n$,
\begin{equation} \label{5.10}
\Delta_{e_1} h_n[\X] = \sum_{k=0}^{n-1} (-qt)^k\,s_{n-k,1^k}[\X].
\end{equation}
\end{prop}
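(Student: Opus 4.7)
The plan is to reformulate Proposition~\ref{prop5.4} as a plethystic identity via Proposition~\ref{prop5.3}, and then prove it by a direct operator computation based on the relation $\Delta_{e_1}=(1-\D_0)/M$.

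Using the plethystic scaling $h_n[cA]=c^n h_n[A]$ with $c=qt$, Proposition~\ref{prop5.3} may be rewritten as
$$\sum_{k=0}^{n-1}(-qt)^k s_{n-k,1^k}[\X] \,=\, \frac{h_n[(1-qt)\X]}{1-qt},$$
so it suffices to prove $\Delta_{e_1}h_n[\X]=\tfrac{h_n[(1-qt)\X]}{1-qt}$. Comparing eigenvalues on the modified Macdonald basis, identity (i) of \pref{formulaoper} together with \pref{formDmu} gives $\D_0\widetilde{H}_\mu=(1-MB_\mu)\widetilde{H}_\mu$, while $\Delta_{e_1}\widetilde{H}_\mu=B_\mu\widetilde{H}_\mu$; hence $\Delta_{e_1}=(1-\D_0)/M$ as operators on symmetric functions.

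The next step is to compute $\D_0 h_n$ directly from the definition~\pref{defD}. Expanding $h_n[\X+M/z]=\sum_{j=0}^n h_{n-j}[\X]\,h_j[M]\,z^{-j}$ and extracting the $z^0$ coefficient of $h_n[\X+M/z]\sum_i(-z)^i e_i[\X]$ yields
$$\D_0 h_n[\X] \,=\, \sum_{j=0}^n (-1)^j h_j[M]\,h_{n-j}[\X]\,e_j[\X].$$
The critical auxiliary ingredient is the closed form $h_j[M]=M\tfrac{1-(qt)^j}{1-qt}$ for $j\geq 1$, which follows by a short partial-fraction calculation from the generating function $\sum_{j\geq 0} h_j[M]z^j=\tfrac{(1-zt)(1-zq)}{(1-z)(1-zqt)}$.

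Substituting into $\Delta_{e_1}h_n=(h_n-\D_0 h_n)/M$ and invoking the classical Newton-type relation $\sum_{j=0}^n(-1)^j h_{n-j}e_j=0$ (valid for $n\geq 1$), the various terms regroup to give
$$\Delta_{e_1} h_n[\X] \,=\, \tfrac{1}{1-qt}\sum_{k=0}^n (-qt)^k h_{n-k}[\X]\,e_k[\X] \,=\, \frac{h_n[(1-qt)\X]}{1-qt},$$
the last equality being the addition formula $h_n[\X(1-z)]=\sum_k (-z)^k h_{n-k}[\X]\,e_k[\X]$ specialized at $z=qt$. The main (and only) real obstacle is the bookkeeping in this final regrouping, which is entirely routine once the three ingredients ($\Delta_{e_1}=(1-\D_0)/M$, the closed form of $h_j[M]$, and Newton's identity) are in hand.
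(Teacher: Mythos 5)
Your proposal is correct and rests on essentially the same three pillars as the paper's own proof: the operator identity $\Delta_{e_1}=(I-\D_0)/M$ from \pref{formulaoper}(i) and \pref{formDmu}, the expansion $\D_0 h_n[\X]=\sum_{j=0}^n(-1)^j h_j[M]\,h_{n-j}[\X]\,e_j[\X]$ obtained from \pref{defD}, and the evaluation $h_j[M]=M\,\tfrac{1-(qt)^j}{1-qt}$. The only divergence is in the final regrouping, where the paper telescopes in the Schur basis via $h_{n-k}e_k=s_{n-k,1^k}+s_{n-k+1,1^{k-1}}$, while you use Newton's identity together with the addition formula and then pass back to Schur functions through Proposition \ref{prop5.3} (which is proved independently, so no circularity arises); both endgames are routine and valid.
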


\begin{proof}[\bf Proof]
By \pref{3.8} and \pref{formulaoper} $(i)$ we have $\Delta_{e_1} = (I-\D_0)/M$. Thus, by \pref{defD}
\begin{eqnarray*}
M \Delta_{e_1} h_n[\X]
&=&  h_n[\X] - h_n[\X+{M}/{z}]\, \Omega[-z\X]\Big|_{z^0}\\ 
&=& - \sum_{k=1}^{n} (-1)^k h_{n-k}[\X] \, e_k[\X]\, h_k[M] \\ 
&=& - \sum_{k=1}^{n-1} (-1)^k s_{n-k,1^k}[\X]\, h_k[M] + \sum_{k=0}^{n-1} (-1)^{k} s_{n-k,1^k}[\X]\, h_{k+1}[M] \\ 
&=& \sum_{k=1}^{n-1} (-1)^k s_{n-k,1^k}[\X]\, (h_{k+1}[M]-h_{k}[M]) + s_n[\X] M.
\end{eqnarray*}
This proves \pref{5.10} since the Cauchy formula and \pref{5.8} give
\begin{eqnarray*}
h_n[M] &=& \sum_{k=0}^{n-1}(-t)^k(1-t)(-q)^k(1-q)\\
           &=& M\sum_{k=0}^{n-1}(qt)^k.
\end{eqnarray*}
\end{proof}
Now, it is shown in \cite{explicit} that $e_n[\X]$ and $h_n[\X]$ have the expansions
\begin{align}
e_n[\X] &= \sum_{\mu \vdash n}\frac{M\, B_\mu(q,t) \Pi_\mu(q,t)}{w_\mu}\,\widetilde{H}_\mu[\X;q,t], \qquad {\rm and} \label{5.11a}\\ 
h_n[\X] &= (-qt)^{n-1} \sum_{\mu\vdash n}\frac{M\,B_\mu(1/q,1/t)\, \Pi_\mu(q,t)}{w_\mu}\,\widetilde{H}_\mu[\X;q,t]. \label{5.11b}
\end{align}
We are then ready to proceed with the rest of our proof.
\begin{proof}[\bf Proof][Proof of Theorem \ref{thmQsquare} continued]
The combination of \pref{5.9}, \pref{5.10} and 
\pref{5.11a} gives
\begin{eqnarray} 
  \frac{qt}{qt-1} h_n\!\left[\frac{1-qt}{qt}\,\X\right] 
            &=& -(qt)^{1-n} \Delta_{e_1} h_n\label{5.12}\\
             &=& (-1)^{n} \sum_{\mu\vdash n} \frac{ M B_\mu(q,t) B_\mu(1/q,1/t)\, \Pi_\mu(q,t)}{w_\mu}\,\widetilde{H}_\mu[\X;q,t] \label{5.13}
\end{eqnarray}
Now from Theorem \ref{thmQind} we derive that 
\begin{eqnarray*}
\Qop_{n,n}\unn &=& \frac{qt}{qt-1} \nabla \uh_n \!\left[\frac{1-qt}{qt}\,\X\right] \nabla^{-1} \unn\\
                         &=& (-1)^n \frac{qt}{qt-1} \nabla h_n\!\left[\frac{1-qt}{qt}\,\X\right].
\end{eqnarray*}
Thus  \pref{5.12} gives
\begin{displaymath}
\Qop_{n,n} \unn = (-qt)^{1-n} \nabla \Delta_{e_1}h_n.
\end{displaymath}
This proves the first equality in \pref{5.7b}. The second equality in \pref{5.13} gives
\begin{equation} \label{5.13}
\Qop_{n,n} \unn = \sum_{\mu\vdash n}\frac{T_\mu\,  M B_\mu(q,t) B_\mu(1/q,1/t)\, \Pi_\mu(q,t)}{w_\mu}\,\widetilde{H}_\mu[\X;q,t].
\end{equation}
But it is not difficult to see that we may write (for any $\mu\vdash n$)
\begin{displaymath}
T_\mu \,B_\mu(1/q,1/t)= e_{n-1}\left[B_\mu(q,t)\right]
\end{displaymath}
and \pref{5.13} becomes (using  \pref{5.11a})
\begin{eqnarray*}
\Qop_{n,n} \unn &=& \Delta_{e_{n-1}} \sum_{\mu\vdash n} \frac{ M B_\mu(q,t)\, \Pi_\mu(q,t)}{w_\mu}\,\widetilde{H}_\mu[\X;q,t]\\
                          &=& \Delta_{e_{n-1}} e_n.
\end{eqnarray*}
\end{proof}
To obtain a combinatorial version of \pref{5.7b} we need some auxiliary facts.
\begin{prop}  For all positive integers $a$ and $b$, we have
\begin{equation} \label{5.15}
\BC_a \BB_b \un = \BC_a e_b[\X] =  (-1/q)^{a-1} s_{a,1^{b}}[\X]  - (-1/q)^a s_{1+a,1^{b-1}}[\X],  
\end{equation}
and
\begin{equation} \label{5.16}
\sum_{\beta\models n-a} \BC_a \BC_\beta \un
= (-1/q)^{a-1} s_{a,1^{n-a}}[\X]  - (-1/q)^a s_{1+a,1^{n-a-1}}[\X].
\end{equation}
\end{prop}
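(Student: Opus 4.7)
The plan is to compute $\BC_a e_b[\X]$ directly from the definition \pref{defC} and invoke $\BB_b\un = e_b[\X]$ from \pref{5.4} for the first equality of \pref{5.15}. The second identity \pref{5.16} will then follow from the first via \pref{I.7}.

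\emph{Step 1 (plethystic expansion).} Apply $\BC_a$ to $f=e_b$ and write $Y:=-\frac{q-1}{qz}=\frac{1}{qz}-\frac{1}{z}$. Using the single-variable collapse $e_i[u]=0$ for $i\ge 2$, one finds
\begin{displaymath}
e_j[Y]=(-1)^j(q-1)/(qz^j)\quad\hbox{for }j\ge1,\qquad e_0[Y]=1.
\end{displaymath}
Then $e_b[\X+Y]=\sum_{j\ge 0}e_{b-j}[\X]\,e_j[Y]$; multiplying by $\sum_m z^m h_m[\X]$ and extracting the $z^a$-coefficient yields
\begin{displaymath}
\BC_a e_b[\X]= (-q)^{1-a}\Bigl(e_b\,h_a+\tfrac{q-1}{q}\sum_{j=1}^{b}(-1)^j\,e_{b-j}\,h_{a+j}\Bigr).
\end{displaymath}

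\emph{Step 2 (Pieri and telescoping).} By the Pieri rule, $e_{c}h_{d}=s_{d,1^{c}}+s_{d+1,1^{c-1}}$ with the convention $s_{*,1^{-1}}=0$. Substituting and re-indexing the ``second'' piece of each term by $j\mapsto j-1$, one observes that the $s_{a+j,1^{b-j}}$ contributions cancel in pairs for all $2\le j\le b$, and the tail $s_{a+b+1,1^{-1}}$ is zero; only $-s_{a+1,1^{b-1}}$ survives. Hence the bracketed sum collapses to
\begin{displaymath}
s_{a,1^b}+s_{a+1,1^{b-1}}-\tfrac{q-1}{q}\,s_{a+1,1^{b-1}}=s_{a,1^b}+\tfrac1q\,s_{a+1,1^{b-1}}.
\end{displaymath}

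\emph{Step 3 (assembly).} The elementary identities $(-q)^{1-a}=(-1/q)^{a-1}$ and $(-q)^{1-a}/q=-(-1/q)^a$ now give exactly \pref{5.15}. For \pref{5.16}, factor $\BC_a$ out of the sum and apply \pref{I.7} to obtain $\sum_{\beta\models n-a}\BC_\beta\un=e_{n-a}[\X]$; the result follows by invoking \pref{5.15} with $b=n-a$. The \textbf{main obstacle} is recognizing the telescoping in Step 2: naively the Pieri expansion produces roughly $2b$ distinct Schur functions, and the fact that all but one cancel is precisely what forces the right-hand side of \pref{5.15} to be so short. The rest is routine plethystic bookkeeping.
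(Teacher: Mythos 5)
Your proposal is correct and follows essentially the same route as the paper: expand $\BC_a e_b$ plethystically from \pref{defC}, apply the Pieri rule $e_ch_d=s_{d,1^c}+s_{d+1,1^{c-1}}$, and telescope so that only $s_{a,1^b}$ and a single $s_{a+1,1^{b-1}}$ term survive, with \pref{5.16} then following by linearity from $\sum_{\beta\models n-a}\BC_\beta\un=e_{n-a}$. The only cosmetic difference is that you compute $e_j[Y]$ directly where the paper writes the same terms as $(-1)^rh_r[(1-1/q)/z]$.
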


\begin{proof}[\bf Proof]
The equality in \pref{5.4} gives the first equality in \pref{5.15} and the equivalence of \pref{5.15} to \pref{5.16}, for $b=n-a$. Using \pref{defC} and \pref{5.8} we derive that
\begin{eqnarray*}
\BC_a e_b[\X]
&=& (-1/q)^{a-1} \sum_{r=0}^b e_{b-r}[\X] (-1)^r\, h_r \left[ (1-1/q)/z \right]\, \Omega[z\X] \Big|_{z^a} \\ 
&=&  (-1/{q})^{a-1} \left(e_b[\X]\, h_a[\X] + ( 1- 1/q) \sum_{r=1}^b (-1)^r e_{b-r}[\X]\, h_{r+a}[\X] \right) \\ 
&=& (-1/q)^{a-1} \Big(s_{a,1^{b}}[\X]+s_{a+1,1^{b-1}}[\X] +  \\ 
&& \qquad\qquad  + (1-1/q) \left( \textstyle \sum_{r=1}^b (-1)^r  s_{r+a,1^{b-r}}[\X] -
\sum_{r=2}^{b} (-1)^r s_{r+a ,1^{b-r}}[\X] \right) \Big) \\ 
&=& (-1/q)^{a-1} \left(s_{a,1^{b}}[\X]+s_{a+1,1^{b-1}}[\X] - (1-1/q)\, s_{1+a,1^{b-1}}[\X] \right) \\ 
&=& (-1/q)^{a-1} s_{a,1^{b}}[\X]-(-1/q)^a s_{1+a,1^{b-1}}[\X].
\end{eqnarray*}
\end{proof}

The following conjectured identity is well known and is also stated in \cite{Shuffle}. We will derive it from Conjecture \ref{conjHMZC} for sake of completeness.

\begin{thm} \label{thm5.3}
Upon the validity of the Compositional Shuffle conjecture we have 
\begin{equation} \label{5.17}
\nabla (-q)^{1-a} s_{a,1^{n-a}} = 
\sum_{\park \in\Parking_{n, \geq a}}
t^{\area(\park)} q^{\dinv(\park)} F_{\ides(\sigma(\park))},
\end{equation}
where the symbol $\Parking_{n, \geq a}$ signifies that the sum is to be extended over the parking functions in the $n\times n$ lattice whose Dyck path's first return to the main diagonal occurs in a row    $y \geq a$.
\end{thm}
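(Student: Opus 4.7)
The plan is to derive (5.17) from the Compositional Shuffle Conjecture (Conjecture~\ref{conjHMZC}) via a telescoping argument built on identity (5.16), which has just been established.

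For $1 \leq a \leq n$ set
$$f_a := \nabla\,(-q)^{1-a}\, s_{a,1^{n-a}}[\X],$$
and by convention set $f_{n+1} := 0$ (consistent with the vanishing convention $s_{n+1,1^{-1}} := 0$). Let $P_{=a}$ denote the sum on the right-hand side of (5.17) restricted to parking functions whose first return to the main diagonal occurs exactly at row $a$, so that $P_{\geq a} = \sum_{b=a}^n P_{=b}$.

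The key observation is that a parking function $\park \in \Parking_n$ has return composition $\p(\park) = (a,\beta)$ for some $\beta \models n-a$ if and only if its first return to the diagonal occurs at row $a$. Summing the Compositional Shuffle Conjecture over all compositions $\alpha = (a,\beta)$ with $\beta \models n-a$ (and recalling $\BC_{(a,\beta)} = \BC_a\BC_\beta$) therefore yields
$$\nabla \sum_{\beta \models n-a} \BC_a \BC_\beta \un \;=\; P_{=a}.$$
On the other hand, applying $\nabla$ to both sides of identity (5.16) gives
$$\nabla \sum_{\beta \models n-a} \BC_a \BC_\beta \un \;=\; f_a - f_{a+1},$$
where the boundary case $a=n$ is handled by the convention $f_{n+1}=0$, and where we use $(-q)^{1-a} = (-1/q)^{a-1}$. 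Combining the two displays gives the telescoping recursion $f_a - f_{a+1} = P_{=a}$ for every $1 \leq a \leq n$.

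Telescoping from $a$ up to $n$ and using $f_{n+1}=0$ finally yields
$$f_a \;=\; \sum_{b=a}^n \bigl(f_b - f_{b+1}\bigr) \;=\; \sum_{b=a}^n P_{=b} \;=\; P_{\geq a},$$
which is exactly the identity (5.17). There is no genuine obstacle here: the hard content lies in the assumed Compositional Shuffle Conjecture and in the Schur expansion (5.16) proved just above; the remainder of the argument is purely formal bookkeeping, and the one detail worth verifying is simply that a parking function's first return occurring at row $a$ is indeed equivalent to the first part of $\p(\park)$ being equal to $a$.
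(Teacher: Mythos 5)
Your proposal is correct and follows the same route as the paper: apply $\nabla$ to \pref{5.16}, identify $\sum_{\beta\models n-a}\nabla\BC_a\BC_\beta\un$ with the sum over parking functions whose first return is exactly at row $a$ via Conjecture~\ref{conjHMZC}, and telescope over $a$. You have merely spelled out the telescoping step that the paper leaves as "an immediate consequence."
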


\begin{proof}[\bf Proof]
An application of $\nabla$ to both sides of \pref{5.16} yields   
\begin{equation} \label{5.18}
\sum_{\beta\models n-a} \nabla \BC_a \BC _\beta\un
= ( \tfrac{1}{q})^{a-1} \nabla (- 1)^{a-1} s_{a,1^{n-a}} - ( \tfrac{1}{q})^{a} \nabla (-1)^{a} s_{1+a,1^{n-a-1}}.
\end{equation}
Furthermore, \pref{5.17} is an immediate consequence of the fact that the Compositional Shuffle Conjecture implies 
\begin{equation} \label{5.19}
\sum_{\beta\models n-a; } \nabla \, \BC_a \BC _\beta\un
= \sum_{\park\in\Parking_{n,= a}}
t^{\area(\park))} q^{\dinv(\park)} F_{\ides(\sigma(\park))},
\end{equation}
where the symbol $\Parking_{n,= a}$ signifies that the sum is to be extended over the parking functions whose Dyck path's first return to the diagonal occurs
exactly at row $a$.
\end{proof}

All these findings lead us to the following surprising identity.
\begin{thm} \label{thm5.4}
Let $\ret(\park)$ denote the first row where the supporting Dyck path of $\park$ hits the diagonal. We have, for all $n\geq 1$, that
\begin{equation} \label{5.20}
\Qop_{n,n} \unn = \sum_{\park\in\Parking_{n}} [\ret(\park)]_t\,  t^{\area(\park))-\ret(\park)+1} q^{\dinv(\park)} F_{\ides(\sigma(\park))}.
\end{equation}
\end{thm}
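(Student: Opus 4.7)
The plan is to combine the algebraic identity (5.7b) for $\Qop_{n,n}\unn$ with the Schur expansion of $\Delta_{e_1} h_n$ from Proposition~\ref{prop5.4}, and then apply Theorem~\ref{thm5.3} termwise, finally recognizing a geometric sum as $[\ret(\park)]_t$.

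More precisely, I would first substitute the expansion
\[
\Delta_{e_1} h_n = \sum_{k=0}^{n-1}(-qt)^k\, s_{n-k,1^k}
\]
from Proposition~\ref{prop5.4} into the identity $\Qop_{n,n}\unn = (-qt)^{1-n}\nabla\Delta_{e_1}h_n$ from Theorem~\ref{thmQsquare}. Reindexing with $a=n-k$ (so that $a$ runs from $1$ to $n$) yields
\[
\Qop_{n,n}\unn \,=\, \sum_{a=1}^{n} (-qt)^{1-a}\,\nabla s_{a,1^{n-a}} \,=\, \sum_{a=1}^{n} t^{1-a}\,\Bigl(\nabla (-q)^{1-a} s_{a,1^{n-a}}\Bigr).
\]

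Next, I would apply Theorem~\ref{thm5.3}, which rewrites each inner bracket as a sum over parking functions in the $n\times n$ lattice whose first return to the diagonal occurs at row $\geq a$. This gives
\[
\Qop_{n,n}\unn \,=\, \sum_{a=1}^{n} t^{1-a} \sum_{\park\in\Parking_{n,\ge a}} t^{\area(\park)}q^{\dinv(\park)}F_{\ides(\sigma(\park))}.
\]
Swapping the order of summation, each $\park\in\Parking_n$ is counted once for every $a$ with $1\le a\le \ret(\park)$, producing the factor
\[
\sum_{a=1}^{\ret(\park)} t^{1-a} \,=\, t^{1-\ret(\park)}\bigl(1+t+\cdots+t^{\ret(\park)-1}\bigr) \,=\, t^{1-\ret(\park)}\,[\ret(\park)]_t.
\]
Combining this with the factor $t^{\area(\park)}q^{\dinv(\park)}F_{\ides(\sigma(\park))}$ gives exactly the right-hand side of \pref{5.20}.

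The only non-routine ingredient is Theorem~\ref{thm5.3}, which relies on the Compositional Shuffle Conjecture; but since it has been stated and reduced in the excerpt, I may invoke it directly. The remaining steps are a reindexing, a termwise application, an interchange of summation, and the elementary geometric identity $\sum_{a=1}^r t^{1-a} = t^{1-r}[r]_t$, so no serious obstacle arises beyond bookkeeping of signs and powers of $qt$.
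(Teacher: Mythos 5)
Your proposal is correct and is essentially identical to the paper's own proof: the paper likewise substitutes the expansion of $\Delta_{e_1}h_n$ from Proposition~\ref{prop5.4} into \pref{5.7b}, reindexes $k\mapsto a=n-k$ to get $\sum_{a=1}^{n}\nabla(-qt)^{1-a}s_{a,1^{n-a}}$, applies \pref{5.17} termwise, and interchanges the sums to extract the factor $\sum_{a=1}^{\ret(\park)}t^{1-a}=t^{1-\ret(\park)}[\ret(\park)]_t$. Your explicit remark that the result inherits the conditionality of Theorem~\ref{thm5.3} on the Compositional Shuffle Conjecture is also consistent with the paper.
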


\begin{proof}[\bf Proof]
Combining \pref{5.7a} with \pref{5.10} gives
\begin{equation} \label{5.21}
\Qop_{n,n} \unn = (-qt)^{1-n} \sum_{k=0}^{n-1} \nabla (-qt)^k s_{n-k,1^k}.
\end{equation}
Now this may be rewritten as 
\begin{equation} \label{5.22}
\Qop_{n,n} \unn
= (-qt)^{1-n} \sum_{a=1}^{n} \nabla (-qt)^{n-a} s_{a,1^{n-a}}
= \sum_{a=1}^{n} \nabla (-qt)^{1-a} s_{a,1^{n-a}},
\end{equation}
and \pref{5.17} gives
\begin{eqnarray} 
\Qop_{n,n} \unn
&=& \sum_{a=1}^{n} t^{1-a} \sum_{\park \in\Parking_{n}} t^{\area(\park))} q^{\dinv(\park)} F_{\ides(\sigma(\park))}\, \charac\left(\ret(\park)\geq a \right) \label{5.23}\\ 
&=& \sum_{\park\in\Parking_{n}}
t^{\area(\park))} q^{\dinv(\park)} F_{\ides(\sigma(\park))} \sum_{a=1}^n t^{1-a}\, \charac \left( a\leq \ret(\park) \right).
\end{eqnarray}
This proves \pref{5.20}.
\end{proof}

\begin{rmk} \label{rmk5.3}
Extensive computer experiments have revealed that the following difference is Schur positive
\begin{equation} \label{5.24}
\Be_{km+1,kn} \cdot (-\mathbf{1}) - t^{d(km,kn)} \Be_{km,kn} \un, 
\end{equation}
where $d(km,kn)$ is the number of integral points between the diagonals for $(km+1,kn)$ and $(km,kn)$. Assuming that $m\leq n$ for simplicity sake, this implies that the following difference is also Schur positive
\begin{equation} \label{5.25}
\Be_{kn,kn} \un -  t^{a(km,kn)} \Be_{km,kn} \un,
\end{equation}
with $a(km,kn)$ equal to the area between the diagonal $(km,kn)$ and the diagonal $(kn,kn)$. This suggests that there is a nice interpretation of $t^{a(km,kn)}  \Be_{km,kn} \un$ as a new sub-module of the space of diagonal harmonic polynomials. We believe that we have a good candidate for this submodule, at least in the coprime case.
\end{rmk}

We terminate with some surprising observations concerning the so-called \define{Rational  $(q,t)$-Catalan}. In the present notation, this remarkable generalization of the $q,t$-Catalan polynomial (see \cite{qtCatPos}) may be defined by setting, for a coprime pair $(m,n)$ 
\begin{equation} \label{5.26}
C_{m,n}(q,t):= \left\langle \Qop_{m,n}\unn, e_n\right\rangle_*.
\end{equation}
It is shown in \cite{NegutShuffle}, by methods which are still beyond our reach, that this polynomial may also be obtained by the following identity.

\begin{thm} [A. Negut] \label{thm5.8}
\begin{equation} \label{5.27}
C_{m,n}(q,t)= \prod_{i=1}^n \frac{1}{(1-z_i)z_i^{a_i(m,n)}}
\prod_{i=1}^{n-1} \frac{1}{(1-qtz_i/z_{i+1})}
\prod_{1\leq i<j\leq n}\Omega[-M z_i/z_j]
\Big|_{z_1^0z_2^0\cdots z_n^0},
\end{equation}
with
\begin{equation} \label{5.28}
a_i(m,n):=\left\lfloor i\frac{m}{n}\right\rfloor- \left\lfloor (i-1)\frac{m}{n}\right\rfloor.
\end{equation}
\end{thm}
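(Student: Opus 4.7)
My plan follows Negut's shuffle-algebra strategy in \cite{NegutShuffle}, reducing the identity to a constant-term extraction.

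First, I would translate the $*$-pairing into a plethystic functional: from \pref{3.6} one checks that $\left\langle p_\mu,e_n\right\rangle_*=\prod_i(1-t^{\mu_i})(1-q^{\mu_i})=p_\mu[M]$, so by linearity $\left\langle f,e_n\right\rangle_*=f[M]$ for every homogeneous symmetric $f$ of degree $n$. This replaces the scalar-product problem by the calculation of $\Qop_{m,n}\cdot(-\mathbf{1})^n$ followed by the plethystic substitution $\X\mapsto M$.

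Next, I would unfold the recursive construction of $\Qop_{m,n}$ via Definition~\ref{defnQ} and Algorithm~\ref{alg1}, writing it as a $\mathbb{Q}(q,t)$-linear combination of words $\D_{a_r}\cdots\D_{a_1}$. Proposition~\ref{prop3.2} then turns each such word acting on $(-\mathbf{1})^n$ into a residue in $z_1,\dots,z_n$ built from $\Omega[-\mathbf{z}\X]$ and the universal symmetrization factor $\prod_{i<j}\Omega[-Mz_i/z_j]$; the substitution $\X\mapsto M$, carried out using \pref{3.4}, makes each residue an explicit rational function in $q,t,z_1,\dots,z_n$.

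The crucial step, which is the heart of Negut's argument, is to show that the sum of all these residues collapses into the single rational integrand
\[
\frac{1}{\prod_iz_i^{a_i(m,n)}}\,\prod_{i=1}^{n-1}\frac{1}{1-qt\,z_i/z_{i+1}}\,\prod_{1\le i<j\le n}\Omega[-Mz_i/z_j]
\]
corresponding to the image of $\Qop_{m,n}$ in the Feigin--Odesskii shuffle algebra. The denominator factors $(1-qt\,z_i/z_{i+1})$ encode the wheel conditions defining that algebra, while the monomial weight $\prod_iz_i^{-a_i(m,n)}$ records the $(m,n)$-staircase path sitting just above the main diagonal. Combining this integrand with the Step~1 substitution, the factors produced by $\Omega[-\mathbf{z}M]$ via \pref{3.4} rearrange into the remaining $\prod_i 1/(1-z_i)$, and extracting the coefficient of $z_1^0\cdots z_n^0$ then delivers the formula \pref{5.27}.

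The main obstacle is precisely this collapse: identifying $\Qop_{m,n}$ explicitly as a shuffle element with the prescribed denominator pattern. This identification rests on the elliptic Hall algebra machinery of Schiffmann--Vasserot \cite{SchiffVassMac,SchiffVassK} and the detailed shuffle identities of Negut \cite{NegutShuffle}, which lie outside the Macdonald-polynomial toolkit developed in the present paper --- exactly why the authors qualify the proof as being ``beyond our reach''.
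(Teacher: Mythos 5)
The paper does not actually prove Theorem \ref{thm5.8}: it quotes the identity from \cite{NegutShuffle} and explicitly says the methods are ``still beyond our reach,'' so there is no internal argument to compare yours against. Your opening reduction is correct and verifiable from \pref{3.6}: since $e_n=\sum_{\mu\vdash n}(-1)^{n-\ell(\mu)}p_\mu/z_\mu$, one gets $\left\langle p_\mu,e_n\right\rangle_*=\prod_i(1-t^{\mu_i})(1-q^{\mu_i})=p_\mu[M]$, hence $\left\langle f,e_n\right\rangle_*=f[M]$; and Proposition~\ref{prop3.2} does convert each word $\D_{a_r}\cdots\D_{a_1}\cdot \mathbf{1}$ into a constant term carrying the factor $\prod_{i<j}\Omega[-Mz_i/z_j]$. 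Up to that point your plan is sound.

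The gap is that the entire content of the theorem sits in the step you label ``the crucial collapse,'' and you assert it rather than prove it. The recursive bracketing in Definition~\ref{defnQ} produces $\Qop_{m,n}$ as a large signed sum of $\D$-words with varying exponent vectors $\alpha_w$, and the claim that this sum of constant terms equals a \emph{single} constant term whose only extra poles are $1-qtz_i/z_{i+1}$ and whose monomial weight is $\prod_i z_i^{-a_i(m,n)}$ is precisely Negut's shuffle-algebra theorem; nothing in your write-up (or in this paper's toolkit) establishes it. There is also a concrete inaccuracy in your final assembly: by \pref{3.4} the substitution $\X\mapsto M$ turns $\Omega[-\mathbf{z}\X]$ into $\prod_i\frac{(1-z_i)(1-qtz_i)}{(1-tz_i)(1-qz_i)}$, which places $(1-z_i)$ in the \emph{numerator}, so these factors cannot simply ``rearrange into'' the $\prod_i 1/(1-z_i)$ appearing in \pref{5.27}; that geometric-series factor arises from the evaluation map on the shuffle algebra, not from the plethysm $\Omega[-z_iM]$. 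As it stands your text is an accurate road map of where the proof lives in the literature, consistent with the paper's own deferral to \cite{NegutShuffle}, but it is not a proof.
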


By a parallel but distinct path Negut has obtained the same polynomial as a weighted sum of standard tableaux. However his version of this result  turns out to be difficult to program on the computer. Fortunately, in \cite{constantTesler}, in a different but in a closely related context a similar sum over standard tableaux has been obtained. It turns out that basically the same method used in \cite{constantTesler} can be used in the present context to derive a standard tableaux expansion for $C_{m,n}(q,t)$ directly from \pref{5.27}. 
Let us write
%\begin{displaymath}
%\mathcal{N}_{m,n} [\mathbf{z};q,t]:= \prod_{i=1}^n \frac{1}{(1-z_i) z_i^{a_{n+1-i}(m,n)}}
%\prod_{i=2}^{n} \frac{1}{(1-qt z_i/z_{i-1})}
%\prod_{1\leq i <j \leq n} \Omega[-uMz_j/z_i],
%\end{displaymath}
\begin{eqnarray*}
\mathbf{z}_{m,n}&:=&\prod_{i=1}^n z_i^{a_{n+1-i}(m,n)}, \qquad {\rm and\ then}\\
\mathcal{N}_{m,n} [\mathbf{z};q,t]&:=& \frac{\Omega[\mathbf{z}]}{\mathbf{z}_{m,n}}
\, \prod_{i=2}^{n} \frac{1}{(1-qt z_i/z_{i-1})}
\prod_{1\leq i <j \leq n} \Omega[-uMz_j/z_i],
\end{eqnarray*}
where $\mathbf{z}$ stands for the set of variables $z_1,z_2,\ldots,z_n$. Equivalently, $\mathbf{z}=z_1+z_2+\ldots+z_n$ in the plethystic setup.
The resulting  rendition of the Negut's result can then be stated as follows.

\begin{prop} \label{thm5.9}
Let $T_n$ be the set of all standard tableaux with labels $1,2,\dots,n$. 
For a given $\tabl \in T_n$, we set $w_\tabl(k) = q^{j-1} t^{i-1}$ if the label $k$ of $\tabl$ is in the $i$-th row $j$-th column. 
We also denote by $S_\tabl$ the substitution set 
\begin{equation} \label{5.29}
\{z_k  = w_\tabl^{-1}(k): 1\leq k\leq n \}.
\end{equation}
We have
\begin{equation} \label{5.30}
C_{m,n}(q,t) = \sum_{\tabl\in T_n} \mathcal{N}_{m,n} [\mathbf{z};q,t] \prod_{i=1}^n (1-z_i w_\tabl(i)) \Big|_{S_\tabl},
\end{equation}
where the sum ranges over all standard tableaux of size $n$, and the $S_\tabl$ substitution should be carried out in the iterative manner. That is, we successively do the substitution for $z_1$ followed by cancellation, and then do the substitution
for $z_2$ followed by cancellation, and so on.
\end{prop}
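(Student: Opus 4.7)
The plan is to derive (5.30) from Negut's constant term identity (5.27) by iterated residue extraction, following the strategy developed in \cite{constantTesler}. The first step is a purely cosmetic change of variables $z_i \mapsto z_{n+1-i}$ in (5.27), which preserves the constant term and converts that identity into the form
\[
C_{m,n}(q,t) \;=\; \mathcal{N}_{m,n}[\mathbf{z};q,t] \prod_{i=1}^n (1-z_i)\Big|_{z_1^0 z_2^0 \cdots z_n^0},
\]
where the $\Omega[\mathbf{z}] = \prod_i 1/(1-z_i)$ factor inside $\mathcal{N}_{m,n}$ has been paired up against the explicit $\prod(1-z_i)$. The monomial $\mathbf{z}_{m,n}^{-1}$ is the only place where the $m,n$-dependent exponents enter, and it is transparent to all residue manipulations.

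Next I would perform iterated constant-term extraction, computing $z_1^0$, then $z_2^0$, and so on. For fixed $z_1,\ldots,z_{k-1}$ the integrand as a rational function of $z_k$ decays appropriately at $z_k = 0$ and $z_k = \infty$, so the constant term in $z_k$ equals (up to sign) the sum of residues at the nonzero poles. The poles in $z_k$ come from the three families of factors $(1-z_k)$, $(1 - qt z_k/z_{k-1})$, and $(1-tz_k/z_j)$, $(1-qz_k/z_j)$ for $j<k$, arising respectively from $\Omega[\mathbf{z}]$, the chain in $\mathcal{N}_{m,n}$, and the $\Omega[-Mz_j/z_k]$ factors.

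The combinatorial heart is the bijection between surviving poles and addable corners. One proves by induction on $k$ that after the first $k-1$ substitutions have encoded a standard tableau $T_{k-1}$ of size $k-1$ and shape $\lambda$, the surviving nonzero poles in $z_k$ correspond to the addable corners of $\lambda$: if $(i,j)$ is such a corner, the pole sits at $z_k = (q^{j-1}t^{i-1})^{-1} = w_{T}(k)^{-1}$, where $T$ is the tableau obtained by placing $k$ in cell $(i,j)$. The would-be poles at non-addable cells are killed by the numerators $(1-z_k/z_j)(1-qtz_k/z_j)$ that $\Omega[-Mz_j/z_k]$ contributes, exactly as in \cite{constantTesler}. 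At each surviving simple pole, the residue is computed by multiplying the integrand by $(1 - z_k w_T(k))$ and then evaluating at $z_k = w_T(k)^{-1}$; this is precisely the cancel-then-substitute prescription of $S_T$. Iterating over $k = 1, 2, \ldots, n$, each complete run of choices produces one $T \in T_n$, and the accumulated residue is $\mathcal{N}_{m,n}[\mathbf{z};q,t]\prod_i (1-z_i w_T(i))\big|_{S_T}$; summing over all $T$ gives (5.30).

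The main obstacle is the inductive verification in the third paragraph: one must check that the numerators from the $\Omega[-Mz_j/z_k]$ factors exactly cancel the poles at non-addable cells, no more and no less, and that the chain factor $\prod\frac{1}{1-qtz_i/z_{i-1}}$ -- which is not present in \cite{constantTesler} and introduces an extra family of $qt$-poles -- integrates consistently with this picture. Once one checks that each such chain pole corresponds to forcing an increasing column (which is compatible with the standard-tableau growth rule), the induction proceeds uniformly. The factor $\mathbf{z}_{m,n}^{-1}$ contributes only a multiplicative $\prod_i w_T(n+1-i)^{a_{n+1-i}(m,n)}$ to the final weight and plays no role in the pole analysis.
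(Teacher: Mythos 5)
Your overall strategy coincides with the paper's: iterated constant--term extraction in $z_1,z_2,\dots,z_n$, with an induction showing that after $d-1$ steps the surviving poles in $z_d$ sit exactly at the reciprocals of the weights of the addable (outer) corners of $\shape(\tabl)$, so that complete runs of pole choices are in bijection with standard tableaux and each run accumulates the evaluation $\mathcal{N}_{m,n}\prod_i(1-z_iw_\tabl(i))\big|_{S_\tabl}$. However, two points need repair. First, your opening display is incorrect: after reversing the variables, the integrand of \pref{5.27} is exactly $\mathcal{N}_{m,n}[\mathbf{z};q,t]$, since the factor $\Omega[\mathbf{z}]=\prod_i 1/(1-z_i)$ is already part of $\mathcal{N}_{m,n}$. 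There is no extra $\prod_i(1-z_i)$ to insert, and inserting it would delete the poles at $z_i=1$ that you yourself invoke in the next paragraph (the pole at $z_1=1$ is the unique residue in the base case, since $w_\tabl(1)=1$). The factors $(1-z_iw_\tabl(i))$ in \pref{5.30} are produced by the residue extraction, not by an a priori multiplication.

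Second, the ``main obstacle'' you identify is real, but your proposed resolution (chain poles ``forcing an increasing column'') is not the actual mechanism, and leaving it at that is a genuine gap. The paper's argument collects every $z_d$-dependent factor, after the substitution $S_\tabl$, into
\begin{displaymath}
\Omega\Big[z_d\Big(1+qt\,w_\tabl(d-1)-M\sum_{i=1}^{d-1}w_\tabl(i)\Big)\Big]\times(\hbox{factors in }z_j/z_d,\ j>d),
\end{displaymath}
and applies the corner identity $-MB_{\shape(\tabl)}=\sum_{(i,j)\in \OC{\shape(\tabl)}}t^{i-1}q^{j-1}-\sum_{(i,j)\in \IC{\shape(\tabl)}}t^{i}q^{j}-1$. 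This converts the expression into a proper rational function of $z_d$ whose denominator factors are indexed by the outer corners and whose numerator factors are indexed by the inner corners of $\shape(\tabl)$; the chain denominator $\big(1-qt\,w_\tabl(d-1)z_d\big)$ is then cancelled because $d-1$, being the largest entry of $\tabl$, necessarily occupies an inner corner $(i,j)$, and $qt\,w_\tabl(d-1)=t^iq^j$ matches the corresponding inner-corner numerator factor. Without this cancellation the chain pole would contribute a residue at a cell that is not an addable corner and the tableau bijection would break down; with it, the induction closes exactly as you outline (subject to the properness condition $b_1\geq 0$, $b_i\geq -1$ that legitimizes computing the constant term by partial fractions).
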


\begin{proof}[\bf Proof]
For convenience, let us write $f(u)$ for $\Omega[-Mu]$ and $b_i$ for $a_{n+1-i}(m,n)$ this gives
$$
 \mathcal{N}_{m,n} [\mathbf{z};q,t] =
 \prod_{i=1}^n{1\over (1- z_i)z_i^{b_i}}\prod_{i=2}^n{1\over (1-qtz_i/z_{i-1})}\prod_{1\le i<j\le n}f(z_i/z_j).
$$ 
We will show by induction that   
\begin{displaymath}
\mathcal{N}_{m,n}[\mathbf{z};q,t] \Big|_{z_1^0\cdots z_{d}^0}  = \sum_{\tabl} \mathcal{N}_{m,n}[\mathbf{z};q,t] \prod_{i=1}^d (1-z_i w_\tabl(i)) \Big|_{S_\tabl},
\end{displaymath}
where $\tabl$ ranges over all standard tableaux of size $d$. Then the proposition is just the $d=n$ case. The $d=1$ case is straightforward. Thus, assume the equality holds for $d-1$.

Now for any term corresponding to a tableau $\tabl$ of size $d-1$, the factors containing $z_d$ are
\begin{eqnarray*}
 && \frac{1}{(1-z_d) z_d^{b_d}}\frac{1}{(1-qt\, {z_d/z_{d-1}})(1- qt \, {z_{d+1}/z_d})^{\chi(d<n)}} 
\prod_{1\leq i <d } f({z_d/z_i})\prod_{d <j \leq n} f({z_j}/{z_d})  \Big|_{S_\tabl} \\ 
&&=\ \frac{1}{(1-z_d) z_d^{b_d}}
\frac{1}{\left(1-qt\, z_d\,w_\tabl(d-1)\right)\left(1- qt \,{z_{d+1}}/{z_d} \right)^{\chi(d<n)} } 
\prod_{1\leq i <d } f(z_d\,w_\tabl(i))\prod_{d <j \leq n} f({z_j}/{z_d})\\ 
&&=\ \frac{1}
{z_d^{b_d} }\, \Omega\!\Big[z_d(1+qt\,w_\tabl(d-1) -M\sum_{i=1}^{d-1} w_\tabl(i)) \Big]
\prod_{d <j \leq n} f({z_j}/{z_d})\times\frac{1}{\big(1- qt \, {z_{d+1} \over{z_d}} \big)^{\chi(d<n)} }.
\end{eqnarray*}
By a simple calculation, first carried out in \cite{plethMac}  we may write
\begin{displaymath}
- M B_{\shape(\tabl)} = \bigg(\sum_{(i,j) \in \OC{\shape(\tabl)}} t^{i-1} q^{j-1}  - \sum_{(i,j) \in \IC{\shape(\tabl)}} t^i q^j \bigg) - 1
\end{displaymath}
where for a partition $\lambda$ we respectively denote by ``$\OC{\lambda}$'' and ``$\IC{\lambda}$'' the \define{outer} and \define{inner} corners of the Ferrers diagram of $\lambda$, as depicted in Figure~\ref{fig:corners} using the french convention. 
\begin{figure}[h]
\begin{center}\setlength{\unitlength}{3mm}
\includegraphics[width=5cm]{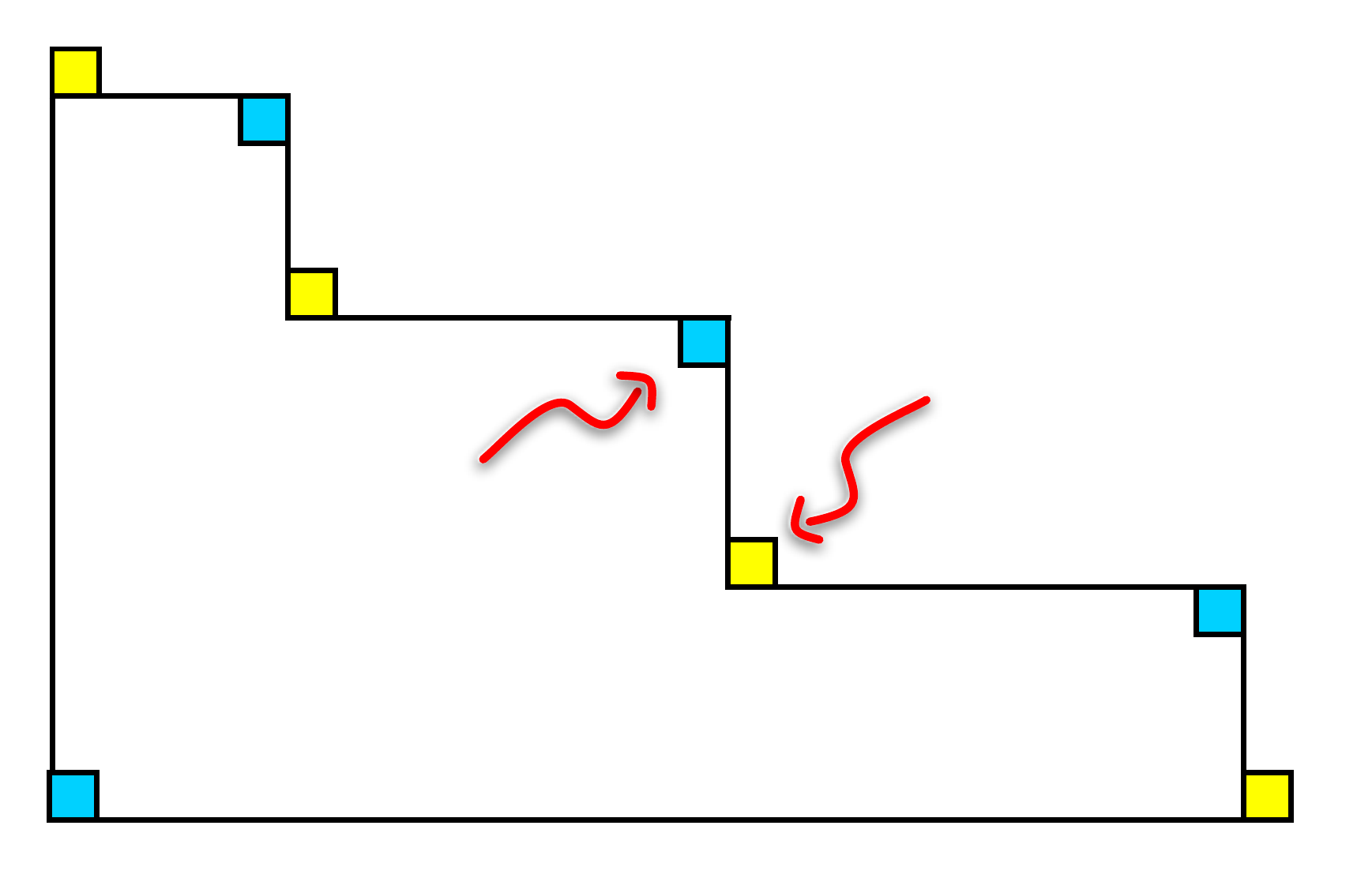}
     \begin{picture}(0,0)(0,0)
\put(-7,6.5){\rotatebox{-10}{\hbox{\tiny Outer corner}}}
\put(-14,4){\rotatebox{-10}{\hbox{\tiny Inner corner}}}
\end{picture}
\end{center}\vskip-20pt
\caption{Inner and outer corners of a partition.}
\label{fig:corners}
\end{figure}

This given, the rational function object of the constant term becomes the following proper rational function in $z_d$ (provided $b_d \geq -1$):
$$
\frac{1}{z_d^{b_d}}
\frac{ \prod_{(i,j) \in \IC{\shape(\tabl)}} (1- t^i q^j  z_{d}) }
{ (1-{ z_d}\, qt\, w_\tabl(d-1))
\prod_{(i,j)\in  \OC{\shape(\tabl)}} (1- t^{i-1} q^{j-1} z_{d})}
\prod_{d <j \leq n}f({z_j}/{z_d})  
\times 
\frac{1}{\left(1- qt\, {z_{d+1} \over{z_d}} \right)^{\chi(d<n)}}.
$$
Since $d-1$ must appear in $\tabl$ in an inner corner of $\shape(\tabl)$, the factor $1-{ z_d}\, qt\, w_\tabl(d-1)$ in the denominator cancels with a factor in the numerator.

Therefore, the only factors, in the denominator that contribute to the constant term\footnote{By the partial fraction algorithm in \cite{fastAlg}.} are those of the form $(1-q^{j-1}t^{i-1} z_{d})$, for $(i,j)$ an outer corner of $\tabl$. For each such $(i,j)$, construct $\tabl'$ by adding $d$ to $\tabl$ at the cell $(i,j)$. 

Thus, by the partial fraction algorithm, we obtain
\begin{eqnarray*}
\mathcal{N}_{m,n}[\mathbf{z};q,t] \prod_{i=1}^{d-1} (1-z_i w_\tabl(i)) \Big|_{S_\tabl} \Big|_{z_d^0}
&=&\sum_{\tabl'} \mathcal{N}_{m,n}[\mathbf{z};q,t] \prod_{i=1}^d (1-z_i w_{\tabl'}(i)) \Big|_{S_\tabl} \Big|_{z_d={1\over w_{\tabl'}(d)} }\\
&=&\sum_{\tabl'} \mathcal{N}_{m,n}[\mathbf{z};q,t] \prod_{i=1}^d (1-z_i w_{\tabl'}(i)) \Big|_{S_{\tabl'}},
\end{eqnarray*}
where the sum ranges over all $\tabl'$ obtained from $\tabl$ by adding $d$ at one of its outer corners.

Applying the above formula to all $\tabl$ of size $d-1$, and using the induction hypothesis, we obtain:
\begin{eqnarray*}
\mathcal{N}_{m,n}[\mathbf{z};q,t] \Big|_{z_1^0\cdots z_{d}^0} 
&=& \sum_{\tabl} \mathcal{N}_{m,n}[\mathbf{z};q,t] \prod_{i=1}^{d-1} (1-z_i w_\tabl(i)) \Big|_{S_\tabl} \Big|_{z_d^0} \\ 
&=& \sum_\tabl \sum_{\tabl'} \mathcal{N}_{m,n}[\mathbf{z};q,t] \prod_{i=1}^d (1-z_i w_{\tabl'}(i)) \Big|_{S_{\tabl'}} \\
&=& \sum_{\tabl'} \mathcal{N}_{m,n}[\mathbf{z};q,t] \prod_{i=1}^d (1-z_i w_{\tabl'}(i)) \Big|_{S_{\tabl'}}, 
\end{eqnarray*}
where the final sum ranges over all $\tabl'$ of size $d$.
\end{proof}

\begin{rmk} \label{rmk5.4}
We can see that the above argument only needs $b_1\geq 0$, and $b_i\geq -1$ for $i=2,3,\dots,n$. Thus the equality of the right hand sides of \pref{5.27} and \pref{5.30} holds true also if the sequence $\{a_i(m,n)\}_{i=1}^n$ is replaced by any of these sequences. In fact computer data reveals that the constant term in \pref{5.27} yields a polynomial with positive integral coefficients for a variety of choices of  $(b_1,b_2,\dots,b_n)$ replacing the sequence $\{a_{i}(m,n)\}_{i=1}^n$. Trying to investigate the nature of these sequences and the possible combinatorial interpretations of the resulting polynomial led to the following construction.
\end{rmk}

Given a path $\path$ in the $m\times n$ lattice rectangle, we define the monomial of $\path$ by setting
\begin{equation} \label{5.31}
\mathbf{z}_\path:=\prod_{j=1}^n z_j^{e_j},
\end{equation}
where $e_j=e_j(\path)$ gives the number of east steps taken by $\path$ at height $j$. Note that, by the nature of \pref{5.31}, we are tacitly assuming that the path takes no east steps at height $0$. Note also that if $\path$ remains above the diagonal $(0,0)\to (m,n)$ then for each east step $(i-1,j)\to(i,j)$ we must have $i/j \leq m/n$. In particular for the path $\path_0$ that remains closest to the diagonal $(0,0)\to (m,n)$, the last east step at height $j$ must be given by $i=\lfloor j{m}/{n}\rfloor$. Thus
\begin{displaymath}
\mathbf{z}_{\path_0}=\mathbf{z}_{m,n}=\prod_{j=1}^n 
z_j^{ \lfloor j{m}/{n}\rfloor - \lfloor (j-1){m}/{n}\rfloor }.
\end{displaymath}
We can easily see that the series
\begin{displaymath}
\Omega[\mathbf{z}]= \prod_{j=1}^n\frac{1}{1- z_j}
\end{displaymath}
may be viewed as the generating function of all monomials of paths with north and east steps that end at height $n$ and start with a north step. We will refer to the later as the \define{NE-paths}. Our aim is to obtain a formula for the $q$-enumeration of the NE-paths in the $m\times n$ lattice rectangle that remain weakly above a given NE-path $\path$.

Notice that if 
\begin{equation} \label{5.32}
\mathbf{z}_\path = z_{r_1}z_{r_2}\cdots z_{r_m},
\qquad {\rm and} \qquad
\mathbf{z}_\delta=z_{s_1}z_{s_2}\cdots z_{s_m}.
\end{equation}
Then $\delta$ remains weakly above $\path$ if and only if
\begin{displaymath}
s_i \geq r_i \qquad\qquad \hbox{for }1\leq i\leq m.
\end{displaymath}
When this happens let us write $\delta \geq \path$. This given, let us set
\begin{displaymath}
C_\path(t):=\sum_{\delta \geq \path} t^{\area(\delta / \path)},
\end{displaymath}
where for $\path$ and $\delta$ as in \pref{5.32}, we let  $\area(\delta/\path)$ denote the number of lattice cells between $\delta$ and $\path$. In particular, for $\delta$ as in \pref{5.32}, we have
\begin{displaymath}
\area(\delta/\path)= \sum_{i=1}^m (s_i-r_i).
\end{displaymath}
Now we have the following  fact
\begin{prop} \label{prop5.5} For all path $\gamma$, we have
\begin{equation} \label{5.33}
\frac{\Omega[\mathbf{z}]}{\mathbf{z}_\path}\, 
\prod_{i=1}^{n-1} \frac{1}{1-tz_i/z_{i+1}}
\Big|_{z_1^0z_2^0\cdots z_n^0}
=\sum_{\delta \geq \path} t^{\area(\delta/\path)}.
\end{equation}
\end{prop}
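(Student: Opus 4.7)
The plan is to compute the constant term directly by expanding both factors as geometric series and matching the resulting summation with the right hand side of \pref{5.33}. First I would write
\begin{displaymath}
\frac{1}{1-z_j}=\sum_{a_j\geq 0}z_j^{a_j},\qquad
\frac{1}{1-tz_i/z_{i+1}}=\sum_{b_i\geq 0}t^{b_i}z_i^{b_i}z_{i+1}^{-b_i},
\end{displaymath}
so that the entire integrand becomes
$\prod_{j=1}^n z_j^{a_j-e_j}\prod_{i=1}^{n-1} t^{b_i}z_i^{b_i}z_{i+1}^{-b_i}$. Collecting, the exponent of $z_j$ is $a_j-e_j+b_j-b_{j-1}$, with the conventions $b_0=b_n=0$.

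The next step is to impose the constant term condition, which gives $a_j-e_j=b_{j-1}-b_j$. Summing over $j\leq k$ yields $b_k=E_k-A_k$, where $E_k:=\sum_{j\leq k}e_j$ and $A_k:=\sum_{j\leq k}a_j$. The constraint $b_n=0$ reduces to $A_n=E_n=m$, while the positivity constraints $a_j\geq 0$ and $b_k\geq 0$ become precisely
\begin{displaymath}
a_j\geq 0\ \text{for all }j,\qquad A_k\leq E_k\ \text{for }k=1,\dots,n-1,\qquad A_n=m.
\end{displaymath}
I would then interpret the tuple $(a_1,\dots,a_n)$ as the east-step profile of an NE-path $\delta$ in the $m\times n$ rectangle, with $a_j$ the number of east steps of $\delta$ at height $j$. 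By the equivalence (recalled in the excerpt) between the lexicographic condition $s_i\geq r_i$ on the ordered east-step heights and the cumulative condition $A_k\leq E_k$, this is exactly the bijective image of the set $\{\delta\geq\path\}$.

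Finally, I would identify the $t$-weight. The total power of $t$ picked up is $\sum_{k=1}^{n-1}b_k=\sum_{k=1}^{n-1}(E_k-A_k)$, and a short Abel-summation computation shows
\begin{displaymath}
\sum_{k=1}^{n-1}(E_k-A_k)=\sum_{j=1}^n j(a_j-e_j)=\sum_{i=1}^m(s_i-r_i)=\area(\delta/\path),
\end{displaymath}
using $\sum_j j\,a_j=nm-\sum_{k=1}^{n-1}A_k$ and the analogous formula for $e_j$. Assembling these observations gives \pref{5.33}. The only real obstacle is the bookkeeping in the final $t$-degree identification, since the summation is indexed from $1$ to $n-1$ but the area runs over all $m$ east steps; once the Abel-summation identity above is written down carefully, nothing else is required.
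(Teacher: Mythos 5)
Your proof is correct and follows essentially the same route as the paper: both expand the kernel as geometric series and extract the constant term, identifying the surviving exponent vectors with the paths $\delta\geq\path$ and the accumulated power of $t$ with $\area(\delta/\path)$. Your bookkeeping via the partial sums $b_k=E_k-A_k$ is a tidier packaging of the paper's argument (which separately checks that each surviving Laurent monomial arises from some $\delta\geq\path$ and, conversely, that every such $\delta$ contributes), and the only step you leave implicit --- the equivalence of $s_i\geq r_i$ for all $i$ with $A_k\leq E_k$ for all $k$ when both height sequences are weakly increasing --- is standard.
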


\begin{proof}[\bf Proof]
Notice that each Laurent monomial produced by  expansion of the product
\begin{equation} \label{5.34}
\prod_{i=1}^{n-1} \frac{1}{1-tz_i/z_{i+1}}=
\prod_{i=1}^{n-1} \left(1+t \frac{z_i}{z_{i+1}}+\left(t\frac{z_i}{z_{i+1}} \right)^2+\cdots
\right)
\end{equation}
may be written in the form
\begin{eqnarray} \label{5.35}
\prod_{i=1}^{n-1}\left(t \frac{z_i}{z_{i+1}}\right)^{c_i}
&=&   \frac{z_1^{c_1}
 \prod_{i=2}^{n-1} z_i^{(c_i-c_{i-1})^+} }
{z_{n}^{c_{n-1}} \prod_{i=2}^{n-1} z_i^{(c_i-c_{i-1})^-}   }\ \prod_{i=1}^{n-1} t^{c_i} \nonumber\\[4pt]
&=& \frac{z_{a_1}z_{a_2}\cdots z_{a_\ell}}
{z_{b_1}z_{b_2}\cdots z_{b_\ell}}\  t^{\sum_{i=1}^{n-1}c_i}, \label{5.35}
\end{eqnarray}
with 
\begin{eqnarray*}
\ell &=&c_1+\sum_{i=2}^{n-1}(c_i-c_{i-1})^+ = \sum_{i=2}^{n-1}(c_i-c_{i-1})^-+c_{n-1},\\
a_r &=& \min \left\{ j : c_1+\sum_{i=2}^j(c_i-c_{i-1})^+ \geq r \right\},
\qquad {\rm and} \\
b_r &=& \min \left\{ j : \sum_{i=2}^j(c_i-c_{i-1})^- \geq r\right\}.
\end{eqnarray*}
Since $j=a_r$ forces $c_j>0$, we see that the equality
\begin{displaymath}
c_1+\sum_{i=2}^j(c_i-c_{i-1})^+=c_j+\sum_{i=2}^j(c_i-c_{i-1})^-
\end{displaymath}
yields that in \pref{5.35} we must have
\begin{equation} \label{5.36}
a_r< b_r , \qquad \hbox{for}\qquad 1\leq r\leq \ell.
\end{equation}
Now for the ratio in \pref{5.35} to contribute to the constant term in \pref{5.33}, it is necessary and sufficient that the reciprocal of this ratio should come out of the expansion
\begin{equation} \label{5.37}
\frac{\Omega[\mathbf{z}]}{\mathbf{z}_\path}
= \frac{1}{z_{r_1}z_{r_2}\cdots z_{r_m}}
\sum_{d_i\geq 0} z_1^{d_1}z_2^{d_2}\cdots z_n^{d_n}.
\end{equation}
That is for some $d_1,d_2,\dots,d_n$ we must have
\begin{equation} \label{5.38}
{ z_{r_1}z_{r_2}\cdots z_{r_m}\over  z_1^{d_1}z_2^{d_2}\cdots z_n^{d_n}}
= {z_{a_1}z_{a_2}\cdots z_{a_\ell}\over z_{b_1}z_{b_2}\cdots z_{b_\ell}}.
\end{equation}
Notice that $z_{a_1}z_{a_2}\cdots z_{a_\ell}$ and $z_{b_1}z_{b_2}\cdots z_{b_\ell}$ have no factor in common, since from the second expression in \pref{5.35} we derive that each variable $z_i$ can appear only in one of these two monomials. Thus $z_{a_1}z_{a_2}\cdots z_{a_\ell}$ divides $z_{r_1}z_{r_2}\cdots z_{r_m}$ and $z_{b_1}z_{b_2}\cdots z_{b_\ell}$ divides $z_1^{d_1}z_2^{d_2}\cdots z_n^{d_n}$ and in particular $\ell \leq m$. But this, together with the inequalities in \pref{5.36} shows that we must have
\begin{displaymath}
z_1^{d_1}z_2^{d_2}\cdots z_n^{d_n}
= z_{s_1}z_{s_2} \cdots  z_{s_m},
\qquad \hbox{with}\quad s_i\geq r_i \quad\hbox{for}\quad 1\leq i \leq m.
\end{displaymath}
In other words $z_1^{d_1}z_2^{d_2}\cdots z_n^{d_n}$ must  be the monomial of a NE-path $\delta\geq \path$. Moreover from the identity in \pref{5.38} we derive that
\begin{displaymath}
\area(\delta/\path) = (b_1-a_1)+(b_2-a_2)+\cdots +(b_\ell - a_\ell) 
= - \sum_{i=1}^\ell a_i + \sum_{i=1}^\ell b_i.
\end{displaymath}
Thus from the middle expression in 5.35 it follows that
\begin{eqnarray*}
\area(\delta/\path)
&=& -c_1-\sum_{i=2}^{n-1}i(c_i-c_{i-1})^+
+ \sum_{i=2}^{n-1}i(c_i-c_{i-1})^-+nc_{n-1} \\
&=& -c_1-\Big(\sum_{i=2}^{n-1}i(c_i-c_{i-1}) \Big)+c_{n-1}\\
&=& -c_1- \sum_{i=2}^{n-1}i c_i  +  \sum_{i=1}^{n-2}(i+1) c_i+nc_{n-1}\\
&=&\sum_{i=1}^{n-1}c_i,
\end{eqnarray*}
which is precisely the power of $t$ contributed by the Laurent monomial in \pref{5.35}.

Finally, suppose that $\delta$ is a NE-path weakly above $\path$ as in \pref{5.32}. This given, let us weight each lattice cell with southeast corner $(a,b)$ with the Laurent monomial $t z_b/z_{b+1}$. Then it is easily seen that for each fixed column $1\leq i\leq m$, the product of the weights of the lattice cells lying between $\delta$ and $\path$ is precisely $t^{s_i-r_i}x_{r_i}/x_{s_i}$. Thus
\begin{displaymath}
\prod_{i=1}^m t^{s_i-r_i} \frac{x_{r_i}}{x_{s_i}}
= t^{\area(\delta/\path)} \frac{\mathbf{z}_\path}{\mathbf{z}_\delta}.
\end{displaymath}
Since the left hand side of this identity is in the form given in \pref{5.35}, we clearly see that every summand of $C_\path(t)$ will come out of the constant term.
\end{proof}

\begin{rmk} \label{rmk5.5}
It is easy to see that, for $q=1$, the constant term in \pref{5.27}, reduces to the one in \pref{5.33} with $\path=\path_0$ (the closest path to the diagonal $(0,0)\to (m,n)$). This is simply due to the identity
\begin{displaymath}
\Omega[-uM]\Big|_{q=1}= {(1-u)(1-qtu)\over (1-tu)(1-qu)}\Big|_{q=1}= 1.
\end{displaymath}
Moreover, since the coprimality of the pair $(m,n)$ had no role in the proof of Proposition \ref{prop5.5}, we were led to the formulation of the following conjecture, widely supported by our computer data.
\end{rmk}

\begin{conj} \label{conjV}
For any pair of positive integers $(u,v)$ and any NE-path $\path$ in the $u\times v$ lattice that remains weakly above the lattice diagonal $(0,0)\to (u,v)$ we have
\begin{equation} \label{5.39}
\frac{\Omega[\mathbf{z}]}{\mathbf{z}_\path}\,
\prod_{i=1}^{n-1} \frac{1}{(1-qtz_i/z_{i+1})}
\prod_{1\leq i<j\leq n} \Omega[-M z_i/z_j]
\Big|_{z_1^0z_2^0\cdots z_n^0}
= \sum_{\delta \geq \path} t^{\area((\delta/\path)} q^{\dinv(\delta)},
\end{equation}
where $\dinv(\delta)$ is computed as in step (3) of Algorithm \ref{algL} for $(km,kn)=(u,v)$.
\end{conj}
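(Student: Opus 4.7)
The plan is to extend the monomial-matching argument of Proposition~\ref{prop5.5} to track the additional $\dinv$ weighting, by combining it with the iterative partial-fraction peeling already exploited in the proof of Proposition~\ref{thm5.9}. As a sanity check, setting $q=1$ collapses every $\Omega[-Mz_i/z_j]$ to $1$ and $(1-qt\,z_i/z_{i+1})^{-1}$ to $(1-t\,z_i/z_{i+1})^{-1}$, while $q^{\dinv(\delta)}$ specializes to $1$, so the conjecture reduces to Proposition~\ref{prop5.5}. We therefore need to show that the full $(q,t)$-deformation is carried correctly by the combinatorics on the right-hand side.

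The concrete strategy is to first rewrite
\[
\Omega[-M\,z_i/z_j] = \frac{(1-z_i/z_j)(1-qt\,z_i/z_j)}{(1-t\,z_i/z_j)(1-q\,z_i/z_j)},
\]
so that all factors of the integrand are visibly rational in the $z_k$'s with explicit poles and zeros, and then to extract the constant term iteratively in $z_1, z_2, \ldots, z_n$, exactly as in the proof of Proposition~\ref{thm5.9}. After the first $d$ extractions I would claim that the surviving sum of residues is indexed by the possible choices of $(e_1(\delta),\dots,e_d(\delta))$ with $e_i(\delta)\ge e_i(\path)$ for $i\le d$, and that each residue carries a weight $t^{A_d}q^{D_d}$, where $A_d$ records the partial area of $\delta$ above $\path$ through row $d$ and $D_d$ records the partial $\dinv$ contribution from the cells of $\lambda(\delta)$ in rows up to $d$. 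The role of $\mathbf{z}_\path^{-1}$ is exactly to force $e_i(\delta)\ge e_i(\path)$, as in the proof of Proposition~\ref{prop5.5}; the denominator factor $(1-qt\,z_i/z_{i+1})$ together with the numerator factor $(1-qt\,z_i/z_j)$ of $\Omega[-Mz_i/z_j]$ supply the characteristic $qt$-weight attached to each $\dinv$-cell, and this $qt$ is split into $t$ (vertical) and $q$ (horizontal) contributions by the poles $(1-tz)$ and $(1-qz)$ extracted at subsequent stages.

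The induction step mirrors the one used in the proof of Proposition~\ref{thm5.9}: after processing $z_1,\ldots,z_{d-1}$ the remaining poles in $z_d$ are of the form $(1-q^{j-1}t^{i-1}z_d)$ with $(i,j)$ corresponding to a legal next east-step for $\delta$, while the inner-corner cancellations $(1-q^{j}t^{i}z_d)$ prune the disallowed extensions. Verifying that the surviving residues biject with admissible continuations of $\delta\ge\path$ and that the accumulated weight matches $t^{\area(\delta/\path)}q^{\dinv(\delta)}$ in the sense of step~(3) of Algorithm~\ref{algL} is the combinatorial core of the argument, and its coprime case follows essentially verbatim from the partial-fraction manipulations used for Proposition~\ref{thm5.9}.

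The hard part will be the non-coprime case $\gcd(u,v)>1$. When the diagonal $(0,0)\to(u,v)$ passes through interior lattice points, several cells of $\lambda(\delta)$ can lie on the same diagonal, and the $\dinv$ inequality $\arm(c)/(\leg(c)+1)\le m/n < (\arm(c)+1)/\leg(c)$ becomes borderline. The tiebreaker $u_i/(km+1)$ built into the rank function of Algorithm~\ref{algL} has no transparent constant-term analog, and tracking it against the residue structure during the peeling requires more delicate bookkeeping than in the coprime setting. A pragmatic route is to first establish the coprime case and then deduce the general case either by a specialization argument or by exploiting the commutation relations in Theorem~\ref{thmQind} that tie the operators $\Qop_{km,kn}$ to the coprime ones.
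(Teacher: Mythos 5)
The statement you are addressing is Conjecture \ref{conjV}, and the paper offers no proof of it: immediately after stating it the authors write that the equality \pref{5.39} ``is still unproven even for general coprime pairs $(m,n)$, except of course for the cases $m=n+1$,'' and they single out as the open problem precisely the question of how the factors $\prod_{i<j}\Omega[-Mz_i/z_j]$ generate the $q^{\dinv}$ weighting. Your proposal does not close this gap: it defers exactly that step, conceding that ``verifying that the surviving residues biject with admissible continuations of $\delta\ge\path$ and that the accumulated weight matches $t^{\area(\delta/\path)}q^{\dinv(\delta)}$ is the combinatorial core of the argument.'' That core is the entire content of the conjecture, so what you have is a plan, not a proof. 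The claim that the coprime case ``follows essentially verbatim'' from the proof of Proposition \ref{thm5.9} is not justified: that argument computes the single scalar $C_{m,n}(q,t)=\langle \Qop_{m,n}\unn,e_n\rangle_*$, and its residue calculus is organized around poles at $q^{j-1}t^{i-1}$ indexed by corners of partition shapes, producing standard tableaux. Here the target is a refinement indexed by lattice paths $\delta\ge\path$ with two separate statistics, and there is no given dictionary between the tableau-indexed residues and the path-indexed summands; inventing one is the open problem, not a routine transcription.

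Two further points. First, your sanity check at $q=1$ is correct and is exactly Remark \ref{rmk5.5} of the paper, but it only confirms the $t$-enumeration of Proposition \ref{prop5.5}; it gives no information about how $q$ is distributed. Second, your description of the role of $\mathbf{z}_\path^{-1}$ is inaccurate: in the proof of Proposition \ref{prop5.5} the condition $\delta\ge\path$ is $s_i\ge r_i$ on the sorted heights of the east steps, which translates into the partial-sum inequalities $\sum_{j\le h}e_j(\delta)\le\sum_{j\le h}e_j(\path)$, not the componentwise inequalities $e_i(\delta)\ge e_i(\path)$ you assert. If you want to make progress, the honest formulation is that you are proposing a strategy for an open conjecture; the decisive lemma you would need --- that after the iterated constant-term extraction each surviving residue is supported on a path $\delta\ge\path$ and carries weight $q^{\dinv(\delta)}$ beyond the $t^{\area(\delta/\path)}$ already accounted for --- is precisely what remains to be discovered.
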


Remarkably, the equality in \pref{5.39} is still unproven even for general coprime pairs $(m,n)$, except of course for the cases $m=n+1$ proved in \cite{qtCatPos}. What is really intriguing is to explain how the inclusion of the expression
\begin{displaymath}
\prod_{1\leq i<j\leq n}\Omega[-M z_i/z_j]
\end{displaymath}
accounts for the insertion of the factor
\begin{displaymath}
q^{\dinv(\path)-\area(\delta/\path)}
\end{displaymath}
in the right hand side of \pref{5.33}). A combinatorial explanation of this phenomenon would lead to an avalanche of consequences in this area, in addition to proving Conjecture \ref{conjV}.

%%%%%%%%%%%%%%%%%%%%%%%%%%%%%%%%%%

%Temporary ending. More plain tex below

%\bibliographystyle{abbrvnat}
%\bibliography{Compo}
%\label{sec:biblio}

%\begin{thebibliography}{22}
%\providecommand{\natexlab}[1]{#1}
%\providecommand{\url}[1]{\texttt{#1}}
%\expandafter\ifx\csname urlstyle\endcsname\relax
%  \providecommand{\doi}[1]{doi: #1}\else
%  \providecommand{\doi}{doi: \begingroup \urlstyle{rm}\Url}\fi

\end{document}